\newtheorem{theorem}{Theorem}[section]
\newtheorem{definition}[theorem]{Definition}
\newtheorem{proposition}[theorem]{Proposition}
\newtheorem{lemma}[theorem]{Lemma}
\newtheorem{remark}[theorem]{Remark}
\newtheorem{example}[theorem]{Example}
\newcommand{\B}{\mathcal{B}}
\newcommand{\HH}{\mathrm{H}}
\newcommand{\Z}{\mathrm{Z}}
\newcommand{\G}{\mathcal{G}}
\newcommand{\M}{\mathcal{M}}
\newcommand{\Sw}{\mathcal{S}}
\newcommand{\Aut}{\mathrm{Aut}}
\newcommand{\I}{\mathrm{I}}
\newcommand{\AAw}{\mathbbm{A}}
\newcommand{\DD}{\mathbb{D}}
\begin{document}

\title{Structure and classification of monoidal groupoids}

\author{M. Calvo \and  A.M. Cegarra \and  B.A. Heredia}

\thanks{The authors want to express their gratitud to the anonymous referee, whose helpful comments and remarks
improved our exposition.
This work has been supported by `Direcci\'on General de
Investigaci\'on' of Spain, Project: MTM2011-22554, and for the third author also by Ministerio de Educaci\'on, Cultura y Deportes of Spain: FPU grant AP2010-3521.}

\address{\newline
Departamento de \'Algebra, Facultad de Ciencias, Universidad de Granada.
\newline 18071 Granada, Spain \newline
 mariacc88@gmail.com \ acegarra@ugr.es\ baheredia@ugr.es }

\subjclass[2000]{18D10;  20M50 ;  18G50; 18G55}

\keywords{Monoidal category; groupoid; non-abelian cohomology ;
monoid cohomology}
\begin{abstract}
The structure of  monoidal categories in which every arrow is
invertible is analyzed in this paper, where we develop a
3-dimensional Schreier-Grothendieck theory of non-abelian factor
sets for their classification. In particular, we state and prove
precise classification theorems for those monoidal groupoids whose
isotropy groups are all abelian, as well as for their homomorphisms,
by means of Leech's cohomology groups of monoids.
\end{abstract}
\maketitle

\section{Introduction and summary} This paper deals with   {\em monoidal
groupoids}
$\G=(\G,\otimes,\mathrm{I},\boldsymbol{a},\boldsymbol{l},\boldsymbol{r})$,
that is, categories $\G$ in which all arrows are invertible,
enriched with a monoidal structure by a tensor product
$\otimes:\G\times\G\to\G$, a unit object $\I$, and coherent
associativity and unit constraints $\boldsymbol{a}:(X\otimes
Y)\otimes Z\cong X\otimes (Y\otimes Z)$, $\boldsymbol{l}:\I\otimes
X\cong X$, and ${\boldsymbol{r}:X\otimes \I\cong X}$. Our main
objective is to state and prove precise classification theorems for
monoidal groupoids and their homomorphisms. In this classification,
two monoidal groupoids, say  $\G$ and $\G'$, are {\em equivalent}
whenever they are connected by a monoidal equivalence
$F:\G\overset{_\sim\,}\to\G'$, and two monoidal functors $F,F':\G\to
\G'$ which are related by a monoidal natural isomorphism,
$\delta:F\overset{_\sim\ }\Rightarrow F'$, are considered the same.

The particular case of {\em categorical groups}  is well known since
it was dealt with by Sinh in 1975. Recall that a categorical group
\cite{Jo-St} (also called a {\em Gr-category} \cite{Br,Si} and a
{\em weak $2$-group} \cite{B-L}) is a monoidal groupoid $\G$ in
which every object $X$ is invertible, in the sense that there is
another object $X^*$ and an isomorphism $X\otimes X^*\cong \I$. In
\cite{Si}, Sinh proved that, for any group $G$, any $G$-module
${(A,\theta:G\to\mathrm{Aut}(A))}$, and any Eilenberg-Mac Lane
cohomology class $c\in\HH^3(G,(A,\theta))$, there exists a
categorical group $\G$, unique up to monoidal equivalence, such that
$G$ is the group of isomorphism classes of objects of $\G$,
$A=\mathrm{Aut}_\G(\I)$ is the (abelian) group of automorphisms in
$\G$ of the unit object, and the $G$-action $\theta$ and the
cohomology class $c$ are canonically deduced from the functoriality
of the tensor and  the naturality and coherence of the constraints
of $\G$. This fact was historically relevant since it pointed out
the utility of categorical groups in homotopy theory: as
$\HH^3(G,(A,\theta))=\HH^3(BG,(A,\theta))$ is the 3rd cohomology
group of the classifying space $BG$ of the group $G$ with local
coefficients in $(A,\theta)$, for any triplet of data
$(G,(A,\theta),c)$ as above, there exists a path-connected
CW-complex $X$, unique up to homotopy equivalence, such that
$\pi_iX=0$ if $i\neq 1,2$, $\pi_1X=G$, $\pi_2X=A$ (as $G$-modules),
and $c\in \HH^3(G,(A,\theta))$ is the unique non-trivial Postnikov
invariant of $X$. Therefore, categorical groups arise as algebraic
homotopy 2-types of path-connected spaces. Indeed, strict
categorical groups (that is, categorical groups in which all the
structure constraints are identities and the monoid of objects is a
group) are the same as {\em crossed modules}, whose use in homotopy
theory goes back to Whitehead (1949) (see \cite{b-s} for the
history).

However, many illustrative examples such as the category
$\mathcal{A}z_R$ of central separable algebras over a commutative
ring $R$,  or the fundamental groupoid $\pi X$ of a Stasheff
$A_4$-space $X$ (a topological monoid, for instance), show the
ubiquity of monoidal groupoids in several branches of mathematics,
and therefore the interest of studying these categorical structures
in their own right. But the situation with monoidal groupoids is
more difficult than with categorical groups. Let us stress the two
main differences between the two situations. On one hand,  {\em the
structure induced by the tensor product on the set of connected
components of a monoidal groupoid is that of a  monoid} rather than
a  group as happens in the categorical group case. On the other
hand, if $\G$ is a categorical group, then the isotropy groups
$\mathrm{Aut}_\G(X)$, $X\in \mathrm{Ob}\G$, are all abelian and all
isomorphic to $\mathrm{Aut}_\G(\I)$, while {\em a monoidal groupoid
may have some isotropy groups that are not isomorphic to
$\mathrm{Aut}_\G(\I)$, as well as some noncommutative isotropy
groups}. Think of the simple example  $\mathfrak{F}in$ of finite
sets and bijective functions between them, whose monoidal structure
is given by disjoint union construction: its monoid of isomorphism
classes of objects is $\mathbb{N}$, the additive monoid of natural
numbers, and its isotropy groups are the symmetric groups
$\mathfrak{G}_n$.

Strongly inspired by Schreier's analysis of group extensions
\cite{Sch} and its extension to fibrations of categories by
Grothendieck \cite{Gro} (but also by works of Sinh \cite{Si}, Breen
\cite{Br}, and others), the structure of the monoidal groupoids is
analyzed in this paper, where we develop a 3-dimensional
Schreier-Grothendieck factor set theory for monoidal groupoids,
which in fact involves a 2-dimensional one for the monoidal functors
between monoidal groupoids, and even a 1-dimensional one for the
monoidal transformations between them. More precisely, our general
conclusions on this issue concerning monoidal groupoids can be
summed up  by saying that we give explicit quasi-inverse
biequivalences
$$
\xymatrix{\mathbf{MonGpd}\ar@{}[r]|-{\thickapprox}
\ar@<4pt>[r]^-{\Delta}&\ar@<3pt>[l]^-{\Sigma} \mathbf{Z^3_{\mathrm{n\text{-}ab}}Mnd},
}
$$
between the 2-category of monoidal groupoids and the 2-category of
what we call {\em Schreier systems for monoidal groupoids}, or {\em
non-abelian $3$-cocycles on monoids}. That is, systems of data
$$(M,\AAw,\Theta,\lambda)$$  consisting of a monoid $M$, a
family of groups ${\AAw=(A_{a})_{a\in M}}$ parameterized by the
elements of the monoid, a family of group homomorphisms $$\Theta=(
A_b\overset{a_*}\longrightarrow A_{ab} \overset{b^*}\longleftarrow
A_a)_{a,b\,\in M},$$ and a normalized map
$$
\lambda:M\times M\times M\longrightarrow \cup_{_{a\in M}}A_a \ \mid\ \lambda_{a,b,c}\in A_{abc},
$$
satisfying various requirements. In the 2-category
$\mathbf{Z^3_{\mathrm{n\text{-}ab}}Mnd}$, every equivalence is
actually an isomorphism, so that our classification results are
effective.

The lower theory of group-valued non-abelian 2-cocycles on monoids by Leech \cite{leech} was extended to small categories by Wells in \cite{wells-2}. Interestingly, the work by Wells
suggests\footnote{ We thank to the referee for this observation.}  it is possible to develop a theory of non-abelian
3-cocycles on small categories, which is attractive to us,  to generalize our results
 about monoidal groupoids to (Benabou) bicategories \cite{street} whose 2-cells
are all invertible, that is, whose hom-categories are
groupoids.

When we focus on the special case of {\em monoidal abelian
groupoids}, that is, monoidal groupoids $\G$ whose isotropy groups
$\Aut_\G(X)$, $X\in\mathrm{Ob}\G$, are all abelian, then our
classification results are stated in a more enjoyable and precise
way by means of Leech's cohomology theory of monoids \cite{leech}.
The biequivalences above restrict to quasi-inverse biequivalences
$$
\xymatrix{\mathbf{MonAbGpd}\ar@{}[r]|-{\thickapprox}
\ar@<4pt>[r]^-{\Delta}&\ar@<3pt>[l]^-{\Sigma} \mathbf{Z^3}\mathbf{Mnd},
}
$$
between the full 2-subcategory $\mathbf{MonAbGpd}\subseteq
\mathbf{MonGpd}$ of monoidal abelian groupoids and the full
2-subcategory $\mathbf{Z^3}\mathbf{Mnd}\subseteq
\mathbf{Z^3_{\mathrm{n\text{-}ab}}Mnd}$ given by those Schreier
systems $(M,\AAw,\Theta,\lambda)$ in which every group $A_a$ of
$\AAw$ is abelian. But, the pair $(\AAw,\Theta)$ that occurs in any
such Schreier system is just a coefficient system for Leech
cohomology groups $\HH^n(M,(\AAw,\Theta))$ of the monoid $M$, and
$\lambda\in Z^3(M,(\AAw,\Theta))$ is a normalized 3-cocycle. From
this observation, we achieve the classification both of the monoidal
abelian groupoids and of the  monoidal functors between them, by
means of the cohomology groups $\HH^3(M,(\AAw,\Theta))$ and
$\HH^2(M,(\AAw,\Theta))$. Although these results  are mainly of
algebraic interest, we would like to stress their potential interest
in homotopy theory since, as we observe in the paper, there are
natural isomorphisms $\HH^n(M,(\AAw,\Theta))\cong
\HH^n(BM,(\AAw,\Theta))$, between Leech cohomology groups of a
monoid $M$ and Gabriel-Zisman's cohomology groups of the classifying
space $BM$ of the monoid with twisted coefficients in
$(\AAw,\Theta)$ \cite[Appendix II]{G-Z}, \cite[Chapter VI]{Ill}.

The plan of the paper, briefly, is as follows. After this
introductory Section 1, the paper is organized in four sections.
Section 2 comprises some notations and basic results concerning
monoidal groupoids and the 2-category that they form, as well as a
list of some striking examples of them. The main Section 3 includes our
`Schreier-Grothendieck theory' for monoidal groupoids. This is
quite a long and technical section, but crucial to our conclusions,
where we describe the 2-category
$\mathbf{Z^3_{\mathrm{n\text{-}ab}}Mnd}$ of non-abelian 3-cocycles
on monoids, and we show in detail how this 2-category is
biequivalent to the 2-category $\mathbf{MonGpd}$ of monoidal
groupoids. Section 4 focuses on the special case of  monoidal
abelian groupoids. In the first subsection, we briefly review some
aspects concerning Leech cohomology of monoids
$\HH^n(M,(\AAw,\Theta))$, and we observe how this cohomology theory is
actually a particular case of Gabriel-Zisman cohomology theory
 of the classifying space $BM$ of the monoid $M$. In the
second subsection, we include our main classification results
concerning  monoidal abelian groupoids in terms of Leech
cohomology groups. And, finally, a third subsection is devoted to
revisiting the 2-category of categorical groups in order to show how
the results obtained here imply the classification results already known for them.

\section{Preliminaries: The 2-category of monoidal groupoids} This section aims to make this paper as self-contained as possible; hence, at
the same time as fixing notations and terminology, we also review
some necessary aspects and results from the background of monoidal
categories that will be used throughout the paper. However, the
material here is quite standard, so the expert reader may skip most
of it.

A {\em monoidal  category}
$\M=(\M,\otimes,\mathrm{I},\boldsymbol{a},\boldsymbol{l},\boldsymbol{r})$
consists of a category  $\M$, a functor $$\otimes:\M\times \M\to
\M,\hspace{0.5cm}(X,Y)\mapsto XY$$ (the {\em tensor product}), a
distinguished object $\I\in \M$ (the {\em unit object}), and natural
isomorphisms
$$\xymatrix@C=14pt{\boldsymbol{a}_{X,Y,Z}:(XY)Z\ar[r]\ar@{}@<-2.5pt>[r]^(.6){\sim}|(.6){-}&X(YZ),}\hspace{0.5cm}
\xymatrix@C=14pt{\boldsymbol{l}_X:\I X\ar[r]\ar@{}@<-2.5pt>[r]^(.6){\sim}|(.6){-}& X,} \hspace{0.5cm}
\xymatrix@C=14pt{\boldsymbol{r}_X:X \I\ar[r]\ar@{}@<-2.5pt>[r]^(.6){\sim}|(.6){-}& X,}$$
(called the {\em associativity, left unit, and right unit constraints},
respectively), such that, for all objects $X,Y,Z,T$ of $\M$, the
diagrams below (called the {\em associativity pentagon} and the {\em
triangle for the unit}) commute.
\begin{equation}\label{mac1}\begin{array}{c}
\xymatrix@C=18pt@R=18pt{((XY)Z)T
\ar[r]^{\boldsymbol{a}}\ar[d]_{ \boldsymbol{a}1}&(XY)(ZT) \ar[r]^{\boldsymbol{a}}& X(Y(ZT))
\\ (X(YZ))T\ar[rr]^{\boldsymbol{a}}&&X((YZ)T)\ar[u]_{1\boldsymbol{a}}}\hspace{0.6cm}
\xymatrix@C=-2pt@R=18pt{(X \I)Y\ar[rr]^{\boldsymbol{a}}\ar[rd]_{\boldsymbol{r} 1}&&X(\I Y)\ar[ld]^{1 \boldsymbol{l}}\\&XY & }
\end{array}
\end{equation}

A monoidal category is called {\em strict} when all the constraints
$\boldsymbol{a}$, $\boldsymbol{l}$, $\boldsymbol{r}$ are identity
arrows.

 Observe that we usually write the structure constraints without
object labels, since their source and target  make it clear what
constraint isomorphism it is.

In any monoidal category, for any objects $X,Y$, the triangles below
commute \cite[Proposition 1.1] {Jo-St}.
\begin{equation}\label{tm2tr}
\begin{array}{c}
\xymatrix@C=-2pt@R=14pt{(XY)\I\ar[rr]^{\boldsymbol{a}}\ar[rd]_{\boldsymbol{r}}&&X(Y\I)\ar[ld]^{1\boldsymbol{r}}\\&XY&
}\hspace{0.4cm}\xymatrix@C=-2pt@R=14pt{(\I X)Y\ar[rr]^{\boldsymbol{a}}\ar[rd]_{\boldsymbol{l}1}&&\I
(XY)\ar[ld]^{\boldsymbol{l}}\\&XY& }
\end{array}
\end{equation}

If $\M,\M'$ are  monoidal categories, then a {\em monoidal functor}
${F=(F,\varphi):\M\to \M'}$ consists of a functor $F:\M\to \M'$, a
family of  natural isomorphisms
$$
\xymatrix@C=14pt{\varphi_{X,Y}:FX\,FY\ar[r]\ar@{}@<-2.5pt>[r]^(.58){\sim}|(.58){-}& F(XY),}
$$
and an isomorphism
$\xymatrix@C=14pt{\varphi_0:\I'\ar[r]\ar@{}@<-2.5pt>[r]^(.5){\sim}|(.5){-}&
F\I,}$ such that the following diagrams commute:
\begin{equation}\label{ecf}\begin{array}{c}
\xymatrix@R=18pt{ (FX\,FY)FZ\ar[r]^{ \varphi 1}\ar[d]_{\boldsymbol{a}'}&F(XY)FZ\ar[r]^{\varphi}&
F((XY)Z)\ar[d]^{F\boldsymbol{a}}\\
FX( FY\,FZ)\ar[r]^{1 \varphi}&FX\, F(YZ) \ar[r]^{\varphi}&F(X(YZ))}\end{array}
\end{equation}
\begin{equation}\label{ecf2}\begin{array}{c}
\xymatrix@R=18pt{FX\, \I'\ar[d]_{\boldsymbol{r}'}\ar[r]^{1 \varphi_0}&FX\,F\I\ar[d]^{\varphi}\\
FX&F(X\I),\ar[l]_{F\boldsymbol{r}}}\end{array}\hspace{0.4cm}
\begin{array}{c}
\xymatrix@R=18pt{\I'\,FX\ar[d]_{\boldsymbol{l}'}\ar[r]^{\varphi_01}&F\I\, FX\ar[d]^{\varphi}\\
FX&F(\I X),\ar[l]_{F\boldsymbol{l}}}\end{array}
\end{equation}

When $F\I=\I'$ and $\varphi_0=1_{\I'}$, the identity, then the
monoidal functor $F$ is qualified as {\em strictly unitary}. When
each of the isomorphisms $\varphi_{X,Y}$, $\varphi_0$ is an
identity, the monoidal functor is called {\em strict}.

The composition of monoidal functors $\M \overset{F}\to \M'
\overset{F'}\to \M''$ will be denoted by juxtaposition, that is,
$F'F:\M \to \M''$. Recall that its structure constraints are
obtained from those
 of $F$ and $F'$, by the
compositions
$$\begin{array}{l}
\xymatrix{F'FX\ F'FY\ar[r]^{\varphi'}&F'(FX\ FY)
\ar[r]^{F'\!\varphi}&F'F(XY)},\\
\xymatrix{\I''\ar[r]^-{\varphi'_0}&F'\I'\ar[r]^-{F'\!\varphi_0}&
F'F\,\I.}
\end{array}$$
The composition of monoidal functors is associative and unitary, so
that the category $\mathbf{MonCat}$ of
 monoidal categories is defined. Actually, this
is the underlying category of a 2-category, also denoted by
$\mathbf{MonCat}$, whose 2-arrows are the {\em morphisms} of
monoidal functors or {\em monoidal natural transformations}. If
$F,F':\M\to\M'$ are monoidal functors, then a morphism between them
is a natural transformation on the underlying functors
$\delta:F\Rightarrow F'$, such that, for all objects  $X,Y$ of $\M$,
the following coherence diagrams commute:
\begin{equation}\label{cfnt}\begin{array}{c}
\xymatrix@R=18pt{FX\, FY\ar[r]^{\varphi}\ar[d]_{\delta_X \delta_Y}&F(XY)\ar[d]^{\delta_{XY}}\\
F'\!X\,F'Y\ar[r]^{\varphi'}&F'(XY)}\hspace{0.6cm} \xymatrix@C=10pt@R=20pt{&\I'\ar[rd]^{\varphi'_0}
\ar[dl]_{\varphi_0}&\\ F\I\ar[rr]^{\delta_{\I}}&&F'\I}\end{array}
\end{equation}

In this 2-category, the ``vertical composition'' of 2-cells, denoted
by
$$
\xymatrix@R=20pt@C=14pt{&\ar@{}@<-4pt>[d]^(.7){\Downarrow\delta} &\\
\M\ar@/^1.5pc/[rr]^{F}\ar@/_1.5pc/[rr]_{F''}\ar[rr]|-{F'}&\ar@{}@<-4pt>[d]^(.3){\Downarrow\delta'} &
\M'\\
&& }\xymatrix@R=20pt@C=10pt{&\\ \ar@{|->}@<-2pt>[r]^{ \circ}&\\&}
\xymatrix@R=20pt@C=14pt{&\ar@{}[dd]|{\Downarrow\,\delta'\circ\,\delta} &\\
\M\ar@/^1pc/[rr]^{F}\ar@/_1pc/[rr]_{F''}& &\M',\\
&& }
$$
is given by the ordinary vertical composition of natural
transformations, that is, the component of $\delta'\circ \delta$ at
any object $X$ of $\M$ is given by the composition in $\M'$
\begin{equation}\label{pspps}(\delta'\circ\delta)_X=\delta'_X\circ\delta_X:FX\overset{\delta_X}
\longrightarrow
F'\!X\overset{\delta'_X}\longrightarrow F''\!X.\end{equation} Similarly, the
``horizontal composition''
$$\xymatrix @C=8pt{\M\ar@/^0.8pc/[rr]^{F}\ar@/_0.8pc/[rr]_{G}\ar@{}[rr]|{\Downarrow\delta}& &
 \M'\ar@/^0.8pc/[rr]^{F'}\ar@/_0.8pc/[rr]_{G'}\ar@{}[rr]|{ \Downarrow\delta'}&&
   \M''} \mapsto \xymatrix@C=8pt{\M\ar@/^0.9pc/[rr]^{ F'F}
 \ar@/_0.9pc/[rr]_{G'G}\ar@{}[rr]|{\Downarrow{\delta'\delta}} && \M''},$$
is given by the usual horizontal composition of natural
transformations:
\begin{equation}\label{psps}
\delta'\delta =G'\delta\circ \delta'F=\delta'G\circ F'\delta:F'F\Rightarrow G'G.
\end{equation}

The following known lemma will be useful in the sequel (cf.
\cite[Lemma 1.1]{ceg02II}, for example). Let
$$\mathbf{MonCat}_{\mathrm u}\subseteq \mathbf{MonCat}$$ denote the
2-subcategory of the 2-category of monoidal categories which is full
on 0-cells and 2-cells,  but whose 1-cells are the strictly unitary
monoidal functors.

\begin{lemma} \label{uslem}
The inclusion  $\mathbf{MonCat}_{\mathrm
u}\hookrightarrow
\mathbf{MonCat}$ is a biequivalence.
\end{lemma}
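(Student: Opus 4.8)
The plan is to show the inclusion $\mathbf{MonCat}_{\mathrm u}\hookrightarrow\mathbf{MonCat}$ is essentially surjective on $0$-cells (which is trivial, since the two $2$-categories have the same objects) and is an equivalence on each hom-category, i.e.\ $2$-essentially surjective and fully faithful on $2$-cells; the latter is automatic because the inclusion is full on $2$-cells. So the only real content is: every monoidal functor $F=(F,\varphi):\M\to\M'$ is isomorphic, by a monoidal natural isomorphism, to a strictly unitary one. First I would construct, from $F$, a new functor $F^{\mathrm u}$ that agrees with $F$ on all objects except the unit, where we \emph{force} $F^{\mathrm u}\I=\I'$; more precisely, set $F^{\mathrm u}X=FX$ for $X\neq\I$ and $F^{\mathrm u}\I=\I'$, and transport the action of $F$ on arrows along the fixed isomorphism $\varphi_0:\I'\xrightarrow{\ \sim\ }F\I$ whenever a source or target equals $\I$ (if $\I$ happens to be chosen so that $F\I=\I'$ already, take $F^{\mathrm u}=F$ with $\varphi_0$ replaced by the identity, but in general one cannot assume this). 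This yields a functor together with a natural isomorphism $\delta:F\Rightarrow F^{\mathrm u}$ whose components are identities away from $\I$ and whose component at $\I$ is $\varphi_0^{-1}:F\I\to\I'=F^{\mathrm u}\I$.

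Next I would equip $F^{\mathrm u}$ with a monoidal structure $(\varphi^{\mathrm u},\varphi^{\mathrm u}_0)$ making $(F^{\mathrm u},\varphi^{\mathrm u})$ a \emph{strictly unitary} monoidal functor and $\delta$ a \emph{monoidal} natural isomorphism. Put $\varphi^{\mathrm u}_0=1_{\I'}$, and define $\varphi^{\mathrm u}_{X,Y}$ as the composite that transports $\varphi_{X,Y}$ along $\delta$ in each slot, namely
\[
\varphi^{\mathrm u}_{X,Y}:F^{\mathrm u}X\,F^{\mathrm u}Y\xrightarrow{\ \delta_X^{-1}\,\delta_Y^{-1}\ }FX\,FY\xrightarrow{\ \varphi_{X,Y}\ }F(XY)\xrightarrow{\ \delta_{XY}\ }F^{\mathrm u}(XY),
\]
which is forced by the first coherence square of \eqref{cfnt} if $\delta$ is to be monoidal. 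With these definitions the two squares of \eqref{cfnt} hold by construction: the unit triangle reads $\varphi^{\mathrm u}_0=\delta_\I\circ\varphi_0$, i.e.\ $1_{\I'}=\varphi_0^{-1}\circ\varphi_0$, and the hexagon/square for $\varphi$ is exactly the definition of $\varphi^{\mathrm u}$ rearranged. It then remains to check that $(F^{\mathrm u},\varphi^{\mathrm u},1_{\I'})$ satisfies the monoidal-functor axioms \eqref{ecf} and \eqref{ecf2}. Axiom \eqref{ecf} follows from the same axiom for $(F,\varphi)$ together with naturality of $\boldsymbol a'$ and of $\delta$, since $\varphi^{\mathrm u}$ differs from $\varphi$ only by conjugation by the natural isomorphism $\delta$ and $\delta$ restricted to objects not equal to $\I$ is the identity while the associativity pentagon only involves tensor products of three (not necessarily unital) objects; the unit axioms \eqref{ecf2} reduce, after substituting $\varphi^{\mathrm u}_0=1$ and unwinding $\delta$, to the original \eqref{ecf2} for $F$ together with the naturality of $\boldsymbol r'$, $\boldsymbol l'$ and of $\delta$, plus the triangle \eqref{tm2tr} if needed to handle the interaction of $\boldsymbol a'$ with units.

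Finally, I would record that a biequivalence between $2$-categories is precisely a $2$-functor that is biessentially surjective on objects and induces equivalences of hom-categories; the inclusion is the identity on objects, is fully faithful on $2$-cells by construction of $\mathbf{MonCat}_{\mathrm u}$, and we have just shown every object of $\mathbf{MonCat}(\M,\M')$ is isomorphic to one in the image, so each hom-functor is an equivalence of categories. Hence the inclusion is a biequivalence. The main obstacle I anticipate is purely bookkeeping: verifying \eqref{ecf} and \eqref{ecf2} for $(F^{\mathrm u},\varphi^{\mathrm u})$ requires carefully chasing several coherence diagrams in $\M'$ where the subtlety is that $\delta$ is not globally the identity (it is nontrivial at $\I$), so one must treat separately the cases in which one of the tensor factors is the unit object; the coherence theorem for monoidal categories (all formal diagrams of constraints commute) together with the axioms already listed makes each such case routine, but there are enough cases that care is needed. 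Since this lemma is classical we will only indicate the construction and leave the diagram chases to the reader.
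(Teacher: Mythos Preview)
Your proposal is correct and follows essentially the same construction as the paper's proof: both define $F^{\mathrm u}$ by leaving $F$ unchanged away from $\I$, setting $F^{\mathrm u}\I=\I'$, and transporting structure along the isomorphism $\varphi_0^{-1}:F\I\to\I'$; your natural isomorphism $\delta$ is exactly the paper's $\Psi_F$, and your formula $\varphi^{\mathrm u}_{X,Y}=\delta_{XY}\circ\varphi_{X,Y}\circ(\delta_X\,\delta_Y)^{-1}$ matches the paper's verbatim. The only organizational difference is that the paper packages the assignment $F\mapsto F^{\mathrm u}$, $\delta\mapsto\delta^{\mathrm u}$ as an explicit \emph{normalization functor} $(\ )^{\mathrm u}:\mathbf{MonCat}(\M,\M')\to\mathbf{MonCat}_{\mathrm u}(\M,\M')$ and exhibits it as a quasi-inverse to the inclusion, whereas you argue directly that the hom-inclusion is fully faithful and essentially surjective; both are equivalent ways of establishing the hom-equivalence, but the paper's packaging is convenient because the normalization $F^{\mathrm u}$ is reused later (in the definition of $\Delta$ on monoidal functors).
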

\begin{proof}
For any two monoidal categories $\M$ and $\M'$, a quasi-inverse to
the inclusion functor $i:\mathbf{MonCat}_{\mathrm
u}(\M,\M')\hookrightarrow \mathbf{MonCat}(\M,\M')$,
\begin{equation}\label{norm}
(\ )^{\mathrm u}: \mathbf{MonCat}(\M,\M')\to \mathbf{MonCat}_{\mathrm u}(\M,\M'),
\end{equation}
which should be called the {\em normalization functor}, works as
follows:  For any given monoidal functor $F=(F,\varphi):\M\to \M'$,
let $\Psi_F=(\psi_X)_{X\in \mathrm{Ob}\M}$ be the family of
isomorphisms in $\M'$
$$
\psi_X=\left\{\begin{array}{lll} 1_{FX}:FX\to FX&\text{if}& X\neq I\\[4pt]
\varphi_0^{-1}:F\I\to \I'&\text{if}& X=I.\end{array}\right.
$$
Then, $F$ can be deformed to a new monoidal functor, $F^{\mathrm
u}=(F^{\mathrm u},\varphi^{\mathrm u}):\M\to \M'$, in a unique way
such that $\Psi_F:F\overset{_\sim\ }\Rightarrow F^{\mathrm u}$
becomes an isomorphism. Namely,
$$
\begin{array}{l}F^{\mathrm u}\!X=\left\{\begin{array}{lll} FX& \text{if}& X\neq I\\[4pt]
\I'&\text{if}& X=I,\end{array}\right.\hspace{0.4cm}
F^{\mathrm u}(X\overset{f}\to Y)=\xymatrix@C=50pt{(F^{\mathrm u}\!X\ar[r]^{\psi_Y\circ
 Ff\circ \psi_X^{-1}}&F^{\mathrm u}Y),}
\\[14pt]
\varphi^{\mathrm u}_{X,Y}=\psi_{XY}\circ \varphi_{X,Y}\circ (\psi_X\psi_Y)^{-1},\hspace{0.4cm}
\varphi^{\mathrm u}_0=\psi_I\circ \varphi_0=1_{\I'}.
\end{array}
$$

 Furthermore, any morphism $\delta:F\Rightarrow G$  gives rise to
 the morphism
$$\delta^{\mathrm u}=\Psi_G^{-1}\circ\delta\circ\Psi_F:F^{\mathrm u}\Rightarrow G^{\mathrm u},$$
which is explicitly given by
$$
\delta^{\mathrm u}_X=\left\{\begin{array}{lll} \delta_{X}:FX\to GX&\text{if}& X\neq I\\[4pt]
\varphi_0\circ \delta_\I\circ \varphi_0^{-1}=1_{\I'}:\I'\to \I'&\text{if}& X=I.\end{array}\right.
$$

These mappings $F\mapsto F^{\mathrm u}$, $\delta\mapsto
\delta^{\mathrm u}$, describe the normalization functor
$(\ref{norm})$.

Since, by construction,  $(\ )^{\mathrm u}\, i=1$, the identity
functor, and we have the natural isomorphism $\Psi:1\overset{_\sim\
}\Rightarrow i\, (\ )^{\mathrm u}$, $F\mapsto \Psi_F$, both functors
$i$ and $(\ )^{\mathrm u}$ are mutually quasi-inverse.\qed
\end{proof}

A monoidal functor $F:\M\to \M'$ is called a {\em monoidal
equivalence} when there exists a monoidal  functor $F':\M'\to \M$
and isomorphisms of monoidal functors $1_\M\overset{_\sim\
}\Rightarrow F'F$ and $FF'\overset{_\sim\ }\Rightarrow1_{\M'}$. Two
monoidal categories are {\em equivalent} if they are connected by a
monoidal equivalence. From Saavedra \cite[I, Proposition 4.4.2]{Sa},
we have the following useful result:
\begin{proposition}\label{saa} A monoidal functor $(F,\varphi):\M\to \M'$ is
a monoidal equivalence if and only if the underlying functor
$F:\M\to \M'$ is an equivalence of categories; that is, if and only
if the functor $F$ is full, faithful, and each object of $\M'$ is
isomorphic to an object of the form $FX$ for some $X\in \M$.
\end{proposition}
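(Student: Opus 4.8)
The plan is to treat the two implications separately. The forward implication is immediate: if $F$ is a monoidal equivalence, then by definition there are a monoidal functor $F'\colon\M'\to\M$ and monoidal natural isomorphisms $1_\M\Rightarrow F'F$ and $FF'\Rightarrow1_{\M'}$, and forgetting the monoidal data these exhibit the underlying functor $F$ as an equivalence of categories, that is, as a full, faithful and essentially surjective functor. So the content lies in the converse, which I would prove by transport of structure.

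Assume then that $F$ is full, faithful and essentially surjective. Using Lemma~\ref{uslem} I would first replace $F$ by a monoidally isomorphic strictly unitary functor, so that we may assume $F\I=\I'$ and $\varphi_0=1_{\I'}$; this reduction is harmless, since the class of monoidal equivalences and the class of monoidal functors whose underlying functor is an equivalence of categories are each invariant under monoidal natural isomorphism. Invoking the axiom of choice, for each object $Y$ of $\M'$ pick an object $F'Y$ of $\M$ and an isomorphism $\eta_Y\colon Y\to F(F'Y)$ in $\M'$, normalised so that $F'\I'=\I$ and $\eta_{\I'}=1_{\I'}$. Since $F$ is full and faithful, every arrow $g\colon Y\to Y'$ of $\M'$ has a unique preimage $F'g\colon F'Y\to F'Y'$ with $F(F'g)=\eta_{Y'}\,g\,\eta_Y^{-1}$; faithfulness of $F$ then makes $F'$ a functor and $\eta$ a natural isomorphism $1_{\M'}\Rightarrow FF'$, and likewise provides a unique natural isomorphism $\zeta\colon1_\M\Rightarrow F'F$ characterised by $F\zeta_X=\eta_{FX}$.

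Next I would make $F'$ into a monoidal functor by transporting the constraints of $F$. Using once more that $F$ is full and faithful, define $\psi_{Y,Y'}\colon F'Y\otimes F'Y'\to F'(Y\otimes Y')$ to be the unique arrow such that $F(\psi_{Y,Y'})$ equals the composite $F(F'Y\otimes F'Y')\xrightarrow{\varphi^{-1}}F(F'Y)\otimes F(F'Y')\xrightarrow{\eta_Y^{-1}\otimes\eta_{Y'}^{-1}}Y\otimes Y'\xrightarrow{\eta_{Y\otimes Y'}}F(F'(Y\otimes Y'))$, and set $\psi_0=1_\I\colon\I\to F'\I'$; each $\psi_{Y,Y'}$ is an isomorphism because the displayed composite is. Naturality of $\psi$ in both variables then follows by applying the faithful $F$ and using naturality of $\varphi$ and of $\eta$. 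The coherence conditions~\eqref{ecf} and~\eqref{ecf2} for $(F',\psi)$ are verified the same way: apply the faithful $F$, substitute the defining formula for $F\psi$, and reduce the resulting diagram to the already-known coherence diagrams~\eqref{ecf}--\eqref{ecf2} for $(F,\varphi)$, together with naturality of $\boldsymbol a,\boldsymbol a',\boldsymbol l',\boldsymbol r'$ and of $\eta$. Finally one checks that $\eta\colon1_{\M'}\Rightarrow FF'$ is monoidal, i.e. satisfies~\eqref{cfnt} with respect to the composite monoidal functor $FF'$ and $1_{\M'}$ — which holds essentially by construction, $\psi$ having been defined precisely so as to force this square — and that $\zeta\colon1_\M\Rightarrow F'F$ is monoidal, which follows by faithfulness of $F$ from the relation $F\zeta=\eta F$, the monoidality of $\eta$ and the naturality of $\varphi$. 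Then $(F',\psi,\psi_0)$ is a monoidal functor and $\zeta$, $\eta^{-1}$ exhibit $F$ as a monoidal equivalence.

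The main obstacle I anticipate is bookkeeping rather than ideas: verifying the coherence axioms~\eqref{ecf} and~\eqref{ecf2} for $(F',\psi)$ requires expanding a fairly large diagram after substituting the defining formula for $F\psi$ and then cancelling the various copies of $\eta$ against one another, and one must also keep track of exactly where the strict-unitarity reduction is used in order to dispose of the $\psi_0$-terms; none of this is deep, but it takes some care to organise cleanly.
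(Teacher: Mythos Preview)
Your proof sketch is correct and follows the standard transport-of-structure argument. Note, however, that the paper does not give its own proof of this proposition: it simply cites Saavedra \cite[I, Proposition 4.4.2]{Sa} and states the result without argument. Your write-up therefore supplies what the paper omits, and the approach you take (reduce to strictly unitary via Lemma~\ref{uslem}, choose a quasi-inverse with normalised counit, define the monoidal constraints on $F'$ by the unique-lifting property of the full and faithful $F$, and then verify monoidality of $\eta$ and $\zeta$) is exactly the classical one underlying Saavedra's result.
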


In this paper,  we are mainly going to work with the full
2-subcategory of $\mathbf{MonCat}$ given by the monoidal groupoids, that is, of monoidal categories whose morphisms are all
invertible, hereafter
denoted by
$$
\mathbf{MonGpd}.
$$

This 2-category of monoidal groupoids contains as a full 2-subcategory
the better-known 2-category of categorical groups, denoted by $$\mathbf{CatGp},$$
whose objects, recall, are those monoidal groupoids in which every object is invertible. The inclusion $\mathbf{CatGp}\hookrightarrow\mathbf{MonGpd}$ has a right biadjoint 2-functor $$\mathcal{P}ic:\mathbf{MonGpd}\to \mathbf{CatGp}$$
that assigns to each monoidal groupoid $\G$ its {\em Picard categorical group} \cite[2.5.1]{Sa},
$$\mathcal{P}ic(\G)\subseteq \G,$$
which is defined as the monoidal full subgroupoid of $\G$ given by the invertible objects.

\subsection{Examples} To help motivate the reader, we shall show some classic and striking
instances of monoidal groupoids. The most basic example of a
monoidal groupoid is perhaps that defined by the category
$\mathfrak{F}in$ of finite sets and bijective functions between
them, whose monoidal structure is given by means of the disjoint
union construction, which arises in the study of categories of
representations of the symmetric groups $\mathfrak{S}_n$ (see Joyal
\cite{J}). Indeed, $\mathfrak{F}in$ is equivalent to the strict
monoidal groupoid $\mathfrak{G}$ defined as the disjoint union of
the symmetric groups $\mathfrak{S}_n$, $n\in \mathbb{N}$.  More
precisely, $\mathfrak{G}$ has objects the natural numbers $n\in
\mathbb{N}$, and the hom-sets are given by
$$
\mathfrak{G}(m,n)=\left\{\begin{array}{ll}\mathfrak{G}_n& \text{ if } m=n\\
\emptyset &\text{ if } m\neq n\,.\end{array}\right.
$$
Composition is multiplication in the symmetric groups,  and the
tensor product is given by the obvious map $\mathfrak{G}_m\times
\mathfrak{G}_n\to \mathfrak{G}_{m+n}$.

Ring theory is a good source of many interesting monoidal groupoids.
 For example, following Fr\"{o}hlich and Wall \cite{fw3}, let $R$ be any given commutative ring. Then, the
monoidal category of $R$-modules, $\M od_R$, whose monoidal
structure is given by the usual tensor product of $R$-modules,
$(M,N)\mapsto M\otimes_RN$, contains as an interesting  monoidal
subcategory the so-called {\em  monoidal groupoid of
$R$-progenerators},   usually denoted  by $$\G en_R,$$ whose objects
are the faithful, finitely generated projective $R$-modules, and
whose morphisms are the module isomorphisms between them. The invertible objects in
$\G en_R$ are the invertible $R$-modules, that is, rank 1 projectives.
Therefore,
$$\mathcal{P}ic(\G en_R) =\mathcal{P}ic_R,$$ is the monoidal groupoid  known as the
 {\em Picard categorical group of $R$.}
Similarly, the monoidal category of associative $R$-algebras with
identity,   $\mathcal{A}lg_R$, whose monoidal structure is given by
the ordinary tensor product of $R$-algebras, $(A,B)\mapsto
A\otimes_RB$, contains a striking instance of a monoidal groupoid:
the so-called {\em monoidal groupoid of Azumaya $R$-algebras},
denoted by $$\mathcal{A}z_R,$$ whose objects are the central
separable $R$-algebras and whose morphisms are the $R$-algebra isomorphisms. Forgetting algebra structure and taking the endomorphism ring
define, respectively, two remarkable monoidal functors:
$\mathrm{Lin}_R:\mathcal{A}z_R\to \G en_R$ and $\mathrm{End}_R:\G
en_R\to \mathcal{A}z_R$. The {\em Morita monoidal groupoid of
$R$-algebras},
$$\M\mathcal{A}lg_R,$$ has objects $R$-algebras, and a morphism
$A\to B$ is an isomorphism class of a Morita equivalence $\M
od_A\overset{_\thicksim\ }\to\M od_B$ (or, equivalently, an
isomorphism class of an invertible (left) $A\otimes_RB^{\mathrm
{op}}$-module). An object $A$ of this monoidal groupoid is
invertible if and only if there is another object $B$ such that
$A\otimes_RB$ is Morita equivalent to $R$. It follows that $A$ must
be an Azumaya $R$-algebra. Conversely, if $A$ is Azumaya, the
isomorphism $A\otimes_RA^{\mathrm{op}}\cong
\mathrm{End}_R(\mathrm{Lin}_R(A))$ shows that, since
$\mathrm{End}_R(\mathrm{Lin}_R(A))$ is Morita equivalent to $R$,
$A^\mathrm{op}$ provides a quasi-inverse of $A$. Hence,
$$\mathcal{P}ic(\M \mathcal{A}lg_R)=\mathcal{B}r_R$$ is the {\em Brauer categorical
group of $R$}, whose objects are the same as those of
$\mathcal{A}z_R$, that is,  the Azumaya $R$-algebras, but whose morphisms
are here iso-classes of Morita equivalences between them.

Every monoidal groupoid arises from an elemental categorical
construction: If $\mathcal{B}$ is any bicategory \cite{street}, then
the {\em monoidal groupoid of endomorphisms of an object} $b\in
\mathcal{B}$, denoted by
$$\mathcal{E}nd(b),$$ has objects the 1-cells $f:b\to b$ in
$\mathcal{B}$ and morphisms the invertible 2-cells
${f\overset{_\thicksim\ }\Rightarrow f'}$ between them. The monoidal
structure on $\mathcal{E}nd(b) $ is given by the horizontal
composition of cells in the bicategory. The {\em categorical group
of autoequivalences} of $b$ is
$$\mathcal{A}ut(b)=\mathcal{P}ic(\mathcal{E}nd(b)),$$ that is, the
monoidal full subgroupoid of equivalences $b\overset{_\thicksim\,
}\to b$ in the bicategory. If, for example, we take
$\B=\mathbf{Cat}$, the 2-category of categories, and  ${\mathcal C}$
is any category, then the  monoidal groupoid
$$\mathcal{E}nd(\mathcal{C})$$ has objects the functors
$F:\mathcal{C}\to \mathcal{C}$ and the morphisms are the natural
isomorphisms $F\overset{_\sim\ }\Rightarrow G$. The composition in
$\mathcal{E}nd(\mathcal{C})$ is given by the usual vertical
composition of natural transformations, while the composition of the
functors and the horizontal composition of the natural
transformations define its (strict) monoidal structure. These
monoidal groupoids of endofunctors are relevant in several
frameworks since a pseudo-action of a monoidal category $\M$ on a
category $\mathcal{C}$ is the same thing as a monoidal functor
$\M\to \mathcal{E}nd(\mathcal{C})$. For instance, a Deligne action
\cite{Del} of a monoid $M$ (say, for example,
$M=\mathbf{B}^+(\mathbf{W},S)$ the Artin-Tits monoid of positive
braids defined by a finite Coxeter group $\mathbf{W}$ with a set of
generators $S$) on a category $\mathcal{C}$, is just a monoidal
functor $M\to \mathcal{E}nd(\mathcal{C})$, from the discrete
monoidal category that $M$ defines to the monoidal groupoid of
endofunctors of $\mathcal{C}$.

The {\em Picard categorical group of a category} $\mathcal{C}$ is
$$\mathcal{P}ic(\mathcal{C})=\mathcal{A}ut(\mathcal{C}),$$
that is, the monoidal full subgroupoid of
$\mathcal{E}nd(\mathcal{C})$ given by the autoequivalences
$\mathcal{C}\overset{_\thicksim\, }\to \mathcal{C}$. If, for
example, $A$ is any ring and we take $\mathcal{C}={}_A\M od_A$, the
category of $A$-bimodules, then, by Morita's theory, there is a
monoidal equivalence
$$
\mathcal{A}ut({}_A\M od_A)\simeq \mathcal{P}ic_A,
$$
where $\mathcal{P}ic_A$ is the Picard categorical group of the ring,
that is, the categorical group of invertible $A$-bimodules with
isomorphisms, whose monoidal structure is given by the usual
monoidal product of $A$-bimodules $(M,N)\mapsto M\otimes_AN$. The
case where $\mathcal{C}=G$ a group regarded as a category with only
one object, is also well-known. The monoidal groupoid
$$\mathcal{E}nd(G)$$
can be described  as having objects the group of endomorphisms
$f:G\to G$ and morphisms $u:f\to f'$ those elements $u\in G$
such that $f=C_uf'$, where ${C_u:G\to G}$ is the inner automorphism
$C_u(v)=uvu^{-1}$ given by conjugation with $u$. The composition of
morphisms is multiplication in $G$, and the (strict) monoidal
structure is defined by
$$
(f\overset{u}\to f')\otimes (g\overset{v}\to g')=\xymatrix@C=30pt{(fg\ar[r]^{u\,f'(v)}&f'g').}
$$
The corresponding Picard categorical group of invertible
objects
$$
\mathcal{A}ut(G),
$$
is the {\em categorical group of automorphisms of $G$}. It is the
internal groupoid in the category of groups whose group of objects
is $\mathrm{Aut}(G)$, the group of automorphisms of $G$,  and whose
group of arrows is the holomorph group $\mathrm{Hol}(G)=G\rtimes
\mathrm{Aut}(G)$. Thus, $\mathcal{A}ut(G)$ is precisely the
categorical group corresponding to the universal crossed module
$G\overset{C}\to \mathrm{Aut}(G)$ by the well-known Verdier
equivalence between the category of Whitehead crossed modules and
the category of strict categorical groups, see \cite{b-s} for the
history.

Algebraic Topology is also a natural setting where monoidal
groupoids appear with recognized interest. Recall that the
fundamental groupoid $\pi X$, of a space $X$, is the category having
$X$ as set of objects, and whose morphisms $[\omega]:x\to y$
($x,y\in X$) are relative end points homotopy classes of paths
$\omega:[0,1]\to X$ with $\omega(0)=x$ and $\omega(1)=y$.
The composition in $\pi X$ is induced by the usual concatenation of
paths and constant paths provide the identities.  Any continuous map
$f:X\to Y$ induces a functor $f_*:\pi X\to \pi Y$ given by
$$f_*\big(x\overset{[\omega]}\longrightarrow y\big)\,=
\,\big(f(x)\overset{[f\omega]}\longrightarrow f(y)\big),$$  so that
the fundamental groupoid construction, $X\mapsto \pi X$,  is a
functor from the category of topological spaces to the category of
groupoids. If $f,g:X\to Y$ are two maps, then a homotopy
$\alpha:f\Rightarrow g$,  $\alpha:[0,1]\to Y^X$, induces a natural
isomorphism $\alpha_*:f_*\Rightarrow g_*$ defined, for any point
$x\in X$, by
$$\alpha_*(x)=[\alpha(-)(x)]:f(x)\to g(x).$$ Moreover,  it is easy to see
that if two homotopies $\alpha,\beta: f\Rightarrow g$  are related
by a relative end maps homotopy, $\alpha\Rrightarrow \beta$, then
both induce the same natural isomorphism, that is, if
$[\alpha]=[\beta]$ in the track groupoid $\pi Y^X$, then
$\alpha_*=\beta_*:f_*\Rightarrow g_*$.

Suppose now that $X=(X,m,e,\alpha,\lambda,\rho)$ is any given
homotopy-coherent associative $H$-space, that is,  a Stasheff $A_4$-space
\cite{Stas} (any topological monoid, for instance). This means that
we have a topological space $X$, which is endowed with a continuous
multiplication map  $m:X\times X\to X$, a point $e\in X$, and
homotopies
$$\begin{array}{ll}
\xymatrix{X\times X\times X\ar@<-13pt>@{}[r]|(.47){\alpha}\ar@{}@<-18pt>[r]|(.5){\Rightarrow}\ar[r]^-{1\times m}\ar[d]_{m\times 1}&X\times X\ar[d]^{m}\\X\times X\ar[r]_{m}& X,}&
\xymatrix{X\ar@<-10pt>@{}[r]|(.73){\lambda}\ar@{}@<-16pt>[r]|(.7){\Leftarrow}\ar[rd]|-{\,1}\ar[r]^{e\times 1}\ar[d]_{1\times e}&X\times X\ar[d]^{m} \\ X\times X\ar@<14pt>@{}[r]|(.3){\rho}\ar@{}@<10pt>[r]|(.35){\Rightarrow}\ar[r]_{m}&X,}
\end{array}
$$
which are homotopy coherent, in the sense that there are homotopies
as below.
$$
\xymatrix@C=10pt{&X^4\ar[ld]_{1\times 1\times m}\ar[rd]|{1\times m\times 1}\ar[rr]^{m\times 1\times 1}&&X^3\ar@{}@<-8pt>[dd]|(.25){\Downarrow \alpha\times 1}|(.75){\Downarrow\alpha}\ar[rd]^{m\times 1}&&&&X^4\ar@{}[rd]|{=}\ar[ld]_{1\times 1\times m}\ar[rr]^{m\times 1\times 1}&&X^3\ar[ld]|{1\times m}\ar[rd]^{m\times 1}&
\\
X^3\ar[rd]_{1\times m}&&X^3\ar@<4pt>@{}[ll]|{\Leftarrow}\ar@<-2pt>@{}[ll]|{1\times \alpha}\ar[ld]^{1\times m}\ar[rr]^{m\times 1}&&X^2\ar[ld]^{m}&\Rrightarrow&X^3\ar[rd]_{1\times m}\ar[rr]^{m\times 1}&&X^2\ar[rd]_{m}&&X^2\ar[ld]^{m}\ar@<2pt>@{}[ll]|{\Leftarrow}\ar@<-4pt>@{}[ll]|{ \alpha}
\\
&X^2\ar[rr]_{m}&&X&&&&X^2\ar@{}[ru]|{\Downarrow\alpha}\ar[rr]_{m}&&X&
}
$$
$$
\xymatrix@R=8pt@C=25pt{
X^2\ar@<20pt>@{}[d]|(.6){\Uparrow}\ar@<30pt>@{}[d]|(.6){1\times\lambda}\ar[dd]_{1\times e\times 1}\ar[rr]^1&
\ar@<20pt>@{}[dd]|(.6){\Uparrow}\ar@<25pt>@{}[dd]|(.6){\alpha}
 &X^2
\ar[dd]^{m}&&X^2\ar@<24pt>@{}[d]|(.6){\Uparrow}\ar@<33pt>@{}[d]|(.6){\rho\times1}\ar[dd]|{1\times e\times 1}\ar[rr]^1&
\ar@<20pt>@{}[dd]|(.6){=} & X^2\ar[dd]^{m}\\
&&&\Rrightarrow&&&\\
X^3\ar[rruu]|(.45){1\times m}\ar[r]_{m\times 1}&X^2\ar[r]_{m}&X&&X^3\ar[rruu]|(.45){m\times 1}\ar[r]_{m\times 1}&X^2\ar[r]_{m}&X
}
$$
Since the functor $X\mapsto \pi X$ preserves products, the
multiplication map ${X\times X\to X}$ induces a tensor product
$$m_*:\pi X\times \pi X\cong\pi(X\times X)\longrightarrow \pi X,$$
and the homotopies $\alpha$, $\lambda$, and $\rho$, induce
corresponding associativity, left unit, and right unit constraints
(which satisfy the pentagon and triangle axioms $(\ref{mac1})$
thanks to the existence of the homotopies $\Rrightarrow$ above), we
have thus defined {\em the fundamental monoidal groupoid of the
$H$-space}
$$
\pi X=(\pi X, m_*,e,\alpha_*, \lambda_*, \rho_*).
$$

Let us stress that $\pi X$ is a categorical group whenever $X$ is
group-like  (for instance if $X\simeq \Omega(Y,y_0)$ is any loop
space).

\section{Schreier-Grothendieck theory for monoidal groupoids}
The Schreier Extension Theorem \cite{Sch} gives a cohomological
classification of extensions of (non-abelian) groups, $1\to A\to
E\to G\to 1$,
 in terms of
equivalence classes of the so-called {\em Schreier systems for group
extensions} or {\em non-abelian $2$-cocycles on groups}. That is, by
means of systems of data
\begin{equation}\label{schg}
(G,A,\Theta,\lambda),
\end{equation}
consisting of groups $G$ and $A$, a family of automorphisms $\Theta
=(A\overset{a_*}\longrightarrow A)_{a\in G}$, and a family of
elements $\lambda=(\lambda_{a,b}\in A)_{a,b\in G}$, satisfying
$$\begin{array}{cc}\lambda_{a,b}\circ (ab)_*(f) \circ
\lambda_{a,b}^{-1}=a_*(b_*(f)), & 1_*(f)=f,\\[5pt]
a_*(\lambda_{b,c})\circ \lambda_{a,bc} =\lambda_{a,b}\circ
\lambda_{ab,c},&\lambda_{a,1}=1=\lambda_{1,a},\end{array}$$ where
$f$ is any element of the group $A$. Any such Schreier system gives
rise to a group extension   \begin{equation}\label{gpext} 1\to A\to
\Sigma(G,A,\Theta, \lambda)\to G\to 1,\end{equation} where
$\Sigma(G,A,\Theta, \lambda)$ is the group defined by considering on
the cartesian product set $A\times G$ the multiplication $(f,a)\circ
(g,b)=(f\circ a_*(g)\circ \lambda_{a,b},ab)$, and any group
extension can be obtained in this way up to isomorphism. Actually,
the construction of the group extension $(\ref{gpext})$, from each
Schreier system $(\ref{schg})$, defines the function on objects of
an equivalence of categories between the category of Schreier
systems for group extensions, whose morphisms
$$(p,q,\varphi):(G,A,\Theta, \lambda)\to (G',A',\Theta', \lambda')$$
are triplets consisting of homomorphisms $p:G\to G'$, $q:A\to A'$,
and a family of elements $\varphi=(\varphi_a\in A')_{a\in G}$,
satisfying:
$$
\begin{array}{cc} \varphi_a\circ p(a)_*(q(f))\circ \varphi_a^{-1}= q(a_*(f)),\\[5pt]
q(\lambda_{a,b}) \circ \varphi_{ab}=\varphi_a\circ p(a)_*(\varphi_b)\circ \lambda'_{p(a),p(b)},\end{array}
$$
and the category of extensions of groups,  whose morphisms are
commutative diagrams
$$
\xymatrix@C=15pt@R=15pt{1\ar[r]&G\ar[d]_q\ar[r]&E\ar[d]_\phi\ar[r]&H\ar[d]^p\ar[r]&1\\
1\ar[r]&G'\ar[r]&E'\ar[r]&H'\ar[r]&1.}
$$

Several generalizations to monoid extensions of Schreier theory are
known in the literature: R\'edey \cite{Re}, Leech
\cite{leech,leech2}, Inassaridze \cite{Inas}, and so on. To classify
fibrations between categories, Grothendieck \cite{Gro} raised Schreier's theorem to a
categorical level by means of the theory of
pseudo-functors, and higher analogue problems were studied, among
others, by Sinh in \cite{Si}, where she  performed the categorical
group classification; Breen \cite{Br}, who treated with non-abelian
3-cocycles of groups for the classification of extensions of groups
by categorical groups; Carrasco and Cegarra in \cite{car94}, where
they carried out the classification of central extensions of
categorical groups; Ulbrich \cite{Ul}, who classified extensions of
Picard categories; Cegarra and Garz\'on in \cite{c-g}, where
a classification of torsors over a category under a categorical
group is done; or Cegarra and Khmaladze \cite{c2,c3}, where
the classification of both braided and symmetric graded categorical
groups is performed, later extended to the fibred cases by Calvo, Cegarra and
Quang in \cite{c-c-q}. We are inspired in all these works to
 undertake a corresponding analysis of
monoidal groupoids below,  where we achieve a 3-dimensional
Schreier-Grothendieck factor set theory for the classification of
monoidal groupoids, which in fact involves a 2-dimensional one for
monoidal functors between monoidal groupoids, and even a
1-dimensional one for the monoidal transformations between them.

\subsection{Schreier systems for monoidal groupoids}
 Keeping the Schreier-Grothendieck theory in mind,
we introduce $3$-dimensional {\em Schreier systems for monoidal
groupoids}, or {\em non-abelian $3$-cocycles on monoids}, which will
be shown as appropriate minimal systems of ``descent datum" to build
a survey of all monoidal groupoids up to monoidal equivalences.

\begin{definition}\label{defSch} A {\em Schreier system (for a monoidal
groupoid)} $\mathcal{S}= (M,\AAw,\Theta,\lambda)$ consists of the
following data:
\begin{itemize}
\item[$\bullet$] a monoid $M$,
\item[$\bullet$] a family of groups $\AAw=(A_{a})_{a\in M}$,
\item[$\bullet$] a family of homomorphisms $\Theta=(
A_b\overset{a_*}\longrightarrow A_{ab}
\overset{b^*}\longleftarrow A_a)_{a,b\,\in M}$,
\item[$\bullet$] a family of elements $\lambda=(\lambda_{a,b,c}\in A_{abc})_{a,b,c \in M}$.
\end{itemize}

These data must satisfy the following seven conditions:

$\bullet$ For any $a,b,c\in M$, $h\in A_a$, $g\in A_b$, and $f\in
A_c$,
\begin{eqnarray}\label{c11}
\lambda_{a,b,c}\circ (ab)_*(f) \circ
\lambda_{a,b,c}^{-1}=a_*(b_*(f)),\,\\[5pt]\label{c12}
\lambda_{a,b,c}\circ c^*(a_*(g)) \circ
\lambda_{a,b,c}^{-1}=a_*(c^*(g)),\\[5pt]\label{c13}
\lambda_{a,b,c}\circ c^*(b^*(h)) \circ
\lambda_{a,b,c}^{-1}=(bc)^*(h). \ \
\end{eqnarray}

$\bullet$ For any $a,b,c,d\in M$,
\begin{equation}\label{c2} a_*(\lambda_{b,c,d})\circ \lambda_{a,bc,d}\circ d^*(\lambda_{a,b,c})=
\lambda_{a,b,cd}\circ\lambda_{ab, c,d}.
\end{equation}

$\bullet$ For any $a,b\in M$, $g\in A_a$, and $f\in A_b$,
\begin{equation}\label{c3}
a_*(f)\circ b^*(g)=b^*(g)\circ a_*(f).
\end{equation}

$\bullet$  For any $a\in M$ and $f\in A_a$,
\begin{equation}\label{c4} 1_*(f)=f=1^*(f).
\end{equation}

$\bullet$  For any $a,b\in M$,
\begin{equation}\label{c5}
\lambda_{1,a,b}=\lambda_{a,1,b}=\lambda_{a,b,1}=1.
\end{equation}
\end{definition}

\begin{example}\label{exagp}  A Schreier system as above with $\lambda=1$
(i.e., such that $\lambda_{a,b,c}=1$  for all $a,b,c\in M$) is the
same thing as a pair of data $(M,(\AAw,\Theta))$ consisting of a
monoid $M$ together with an internal group object $(\AAw,\Theta)\in
\mathbf{Gp}(\mathbf{Mnd}\!\downarrow_{\! M})$, in the comma category
 of monoids over $M$. We refer  to Wells \cite[Theorem 6]{wells} for details, but briefly
 let us say that, for that identification, one regards
 $(\AAw,\Theta)$ as the monoid obtained as the disjoint union of the groups
 $A_a$, $a\in M$,
  with multiplication given by  $(f,a)(g,b)= (a_*(f)\circ
b^*(g),ab)$.
   This multiplication
   is associative thanks to $(\ref{c11})$, $(\ref{c12})$, and $(\ref{c13})$, and
   it is unitary, with $(1,1)$ its unity, owing
   to $(\ref{c4})$. The monoid homomorphism $\cup_{a\in M} A_a\to\!M$ is the obvious projection
   $(f,a)\mapsto a$, and the internal group operation is defined by the
   map $$\cup_{a\in M} A_a\times_M\cup_{a\in M} A_a\to \cup_{a\in M} A_a, \hspace{0.4cm}
   ((f,a),(g,a))\mapsto (f\circ g,a),$$
   which is plainly recognized  to be a monoid homomorphism thanks to the centralizing
   condition $(\ref{c3})$.

Surjective monoid homomorphisms $E\to M$ endowed with a principal
homogenous internal $(\AAw,\Theta)$-action in
$\mathbf{Mon}\!\downarrow{\!_M}$ (i.e., internal
$(\AAw,\Theta)$-torsors) are classified by means of Leech {\em
non-abelian $2$-cocycles of $M$ with coefficients in
$(\AAw,\Theta)$}. That is, by families $\lambda=(\lambda_{a,b})$ of
elements $\lambda_{a,b}\in A_{ab}$, one for each $a,b\in M$, such
that
$$a_*(\lambda_{b,c})\circ \lambda_{a,bc}=c^*(\lambda_{a,b})\circ
\lambda_{ab,c}, \hspace{0.4cm}\lambda_{1,a}=1=\lambda_{a,1},$$ for
all $a,b,c\in M$; see Leech \cite[Section 3]{leech} and Wells
\cite[Theorems 1 and 7]{wells}.
\end{example}

\begin{remark}
Any monoid can be regarded as a small category with only one object,
and  natural generalizations to small categories of the above facts in Example \ref{exagp} can be found in the unpublished paper by Wells
\cite{wells-2}, where he
handles non-abelian coefficients as well as the abelian ones that the (commonly attributed to  Baues and Wirshing \cite{B-W}) cohomology of small categories theory uses. The results by Wells suggest that might be possible to develop a
theory of group-valued non-abelian 3-cocycles on small categories in order to
generalize our results in this paper about monoidal groupoids to
bicategories \cite{street} whose hom-categories are
groupoids, that is, to `many
objects' monoidal groupoids. Although this generalization (quite far from obvious) to
bicategories is beyond the scope and possibilities of this already long
paper, we shall point out that these {\em non-abelian $3$-cocycles
on  a category $\mathcal{C}$} should be systems of data consisting of
a group $A_a$ for each arrow $a:y\to x$ of $\mathcal{C}$, group
homomorphisms $A_b\overset{a_*}\longrightarrow A_{ab}
\overset{b^*}\longleftarrow A_a$ for each pair of composible arrows
$z\overset{b}\to y\overset{a}\to x$ in $\mathcal{C}$, and elements
$\lambda_{a,b,c}\in A_{abc}$, one for each three composible arrows
$t\overset{c}\to z\overset{b}\to y\overset{a}\to x$ in
$\mathcal{C}$, all subject to the corresponding seven conditions as in
$(\ref{c11})- (\ref{c5})$.
\end{remark}

\begin{remark} Regarding any group as a groupoid with exactly one object, it was observed by
Grothendieck \cite{Gro} that a non-abelian 2-cocycle
$(G,A,\Theta,\lambda)$ for a group extension of a group $G$ by a
group $A$, as in $(\ref{schg})$, can be identified as a normal
pseudo-functor on $G$ that associates the group $A$ to the unique
object of $G$. Similarly, as one identifies any monoid with the
monoidal discrete category it defines, then a Schreier system $(M,
\AAw,\Theta,\lambda)$ for a monoidal groupoid, as in Definition
\ref{defSch}, can be viewed as a group valued normal monoidal
pseudo-functor on  $M$, in the sense of Carrasco-Cegarra
\cite[Definition 1.6]{car94}, that associates the group $A_a$ to
each object $a\in M$.
\end{remark}

Next we explain how Schreier systems, as in Definition \ref{defSch},
are characteristically associated to monoidal groupoids.

\subsection{Schreier systems associated to  monoidal groupoids}\label{ssamg}
For any given monoidal groupoid
$\G=(\G,\otimes,\mathrm{I},\boldsymbol{a},\boldsymbol{l},\boldsymbol{r})$,
let
\begin{equation}\label{m}
M(\G)\!=\,^{_{\textstyle\mathrm{Ob}\G}}\!\!/_{{\!\cong}}
\end{equation}
be the monoid of isomorphism classes $a=[X]$ of objects $X\in \G$
where multiplication is induced by the tensor product, that is,
$[X][Y]=[X Y]$.

The construction $\G\mapsto M(\G)$  turns the category of monoidal
groupoids into a  fibred category over the category of monoids. To
determine its fiber over a monoid, we shall proceed as Schreier did
for extensions of a group.

We start by choosing a cleavage for $\G$ over $M(\G)$; that is, for
each $a\in M(\G)$, let us choose an object $X_a\in a$, and for any
other $X\in a$, we fix a morphism $\Gamma=\Gamma_X:X\to X_a$. In
particular,
 we take
 \begin{equation} \label{sel}\begin{array}{ccc}X_1=\I,&\hspace{0.4cm} &\Gamma_{X_a}=1_{X_a}:X_a\to X_a,\\
\Gamma_{\I X_a}=\boldsymbol{l}_{X_a}:\I X_a\to X_a,& &\Gamma_{X_a \I}=\boldsymbol{r}_{X_a}:X_a\I\to X_a.
\end{array}\end{equation}

Then, we have the following family of isotropy groups of the
groupoid $\G$ parameterized by the elements of $M(\G)$:
\begin{equation}\label{a}
\AAw(\G)=\big(\mathrm{Aut}_\G(X_a)\big)_{a\in M(\G)}.
\end{equation}

We also have the family of group homomorphisms
\begin{equation}\label{ab} \Theta(\G)=\big(\mathrm{Aut}_\G(X_b)\overset{a_*}\longrightarrow \mathrm{Aut}_\G(X_{ab})
\overset{b^*}\longleftarrow \mathrm{Aut}_\G(X_a)   \big)_{a,b\in M(\G)}\,,
\end{equation}
which, for any $a,b\in M$, carry automorphisms of $\G$, say
$f:X_b\to X_b$ and ${g:X_a\to X_a}$, to the automorphisms
$a_*(f):X_{ab}\to X_{ab}$ and $b^*(g):X_{ab}\to X_{ab}$,
respectively determined by the commutativity of the squares below.
 \begin{equation}\label{dab}\begin{array}{c}
  \xymatrix{X_a X_b\ar[d]_{\Gamma}\ar[r]^{1 f}&X_a X_b\ar[d]^{\Gamma}\\ X_{ab}\ar@{.>}[r]^{a_*(f)}&X_{ab}} \hspace{0.6cm}
  \xymatrix{X_a X_b\ar[d]_{\Gamma}\ar[r]^{g 1}&X_a X_b\ar[d]^{\Gamma}\\ X_{ab}\ar@{.>}[r]^{b^*(g)}&X_{ab}}
  \end{array}
\end{equation}

Furthermore, for any three elements  $a,b,c\in M(\G)$, there is a
unique
$$\lambda_{a,b,c}\in \mathrm{Aut}_\G(X_{abc})$$
making commutative the diagram
\begin{equation}\label{dlam}\begin{array}{c}
\xymatrix@C=40pt{
(X_a X_b) X_c \ar[d]_{\boldsymbol{a}}\ar[r]^-{\Gamma 1}&X_{ab} X_c\ar[r]^-{\Gamma}&X_{abc}\ar@{.>}[d]^{\lambda_{a,b,c}}
\\ X_a (X_b X_c)\ar[r]^-{1 \Gamma}&X_a X_{bc}\ar[r]^-{\Gamma}&X_{abc}.
}\end{array}
\end{equation}

Then, letting
\begin{equation}\label{abc}
\lambda(\G)=\big(\lambda_{a,b,c}\in \mathrm{Aut}_\G(X_{abc})\big)_{\!a,b,c\,\in M(\G)}\, ,
\end{equation}
we have:

\begin{proposition} For any monoidal groupoid
$\G=(\G,\otimes,\mathrm{I},\boldsymbol{a},\boldsymbol{l},\boldsymbol{r})$,
the associated quadruplet
\begin{equation}\label{assch}\Delta(\G)=\big(M(\G),\AAw(\G),\Theta(\G),\lambda(\G)\big),\end{equation} given by
$(\ref{m})$, $(\ref{a})$, $(\ref{ab})$, and $(\ref{abc})$, is a
Schreier system.
\end{proposition}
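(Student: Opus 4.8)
The plan is to verify the seven axioms (\ref{c11})--(\ref{c5}) one at a time, in each case feeding the defining commutative diagram (\ref{dab}) or (\ref{dlam}) into the appropriate coherence constraint of the monoidal groupoid $\G$ and reading off the required identity. The conceptual point is that every axiom is an instance of \emph{naturality} (of $\boldsymbol{a}$, $\boldsymbol{l}$, $\boldsymbol{r}$, or of the tensor functor on morphisms), of \emph{functoriality} of $\otimes$, or of the \emph{pentagon}; the chosen normalizations (\ref{sel}) are exactly what is needed to make the unit axioms (\ref{c4}), (\ref{c5}) come out on the nose.

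First I would establish the "conjugation" relations (\ref{c11})--(\ref{c13}). For (\ref{c11}): starting from an automorphism $f\colon X_c\to X_c$, consider the composite $1_{X_a}(1_{X_b}f)\colon X_a(X_bX_c)\to X_a(X_bX_c)$. On one hand, naturality of $\boldsymbol{a}$ lets me transport this along $\boldsymbol{a}$ to $(1_{X_a}1_{X_b})f$ on $(X_aX_b)X_c$, which by functoriality of $\otimes$ equals $(1_{X_aX_b})f$; then two applications of the square defining $(ab)_*$ (once for $X_{ab}$ built from $X_aX_b$, conceptually) together with the square defining $\lambda_{a,b,c}$ in (\ref{dlam}) identify the two ways around, yielding $\lambda_{a,b,c}\circ(ab)_*(f)\circ\lambda_{a,b,c}^{-1}=a_*(b_*(f))$. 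The key is to chase $f$ through the rectangle of (\ref{dlam}) augmented on both sides by the $\Gamma$-squares; all inner cells commute by naturality, so the outer rectangle does too. Identities (\ref{c12}) and (\ref{c13}) are proved the same way, replacing $1_{X_a}(1_{X_b}f)$ by $1_{X_a}(g\,1_{X_c})$ and $(h\,1_{X_b})1_{X_c}$ respectively and again invoking naturality of $\boldsymbol{a}$ and functoriality of $\otimes$.

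Next, the cocycle condition (\ref{c2}) for $a,b,c,d$ is precisely the \emph{pentagon} (\ref{mac1}) for the objects $X_a,X_b,X_c,X_d$, pasted along its five edges with the five $\Gamma$-squares and with the instances of (\ref{dlam}) that define $\lambda_{b,c,d},\lambda_{a,bc,d},\lambda_{a,b,c},\lambda_{a,b,cd},\lambda_{ab,c,d}$; each face of the resulting diagram commutes — four of them by naturality of $\boldsymbol{a}$ with respect to the $\Gamma$'s and by functoriality of $\otimes$, the central one by the pentagon itself — so the boundary, which reads exactly (\ref{c2}) after cancelling the $\Gamma$-edges, commutes. The centralizing condition (\ref{c3}) is even more direct: $a_*(f)$ and $b^*(g)$ are both defined through $X_aX_b\to X_{ab}$, and $a_*(f)\circ b^*(g)$ and $b^*(g)\circ a_*(f)$ are the two transports of $g\,f = (g\,1_{X_b})\circ(1_{X_a}f) = (1_{X_a}f)\circ(g\,1_{X_b})\colon X_aX_b\to X_aX_b$, which coincide by the interchange law (functoriality of $\otimes$ on morphisms). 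Finally, (\ref{c4}) holds because for $a=1$ we have $X_1=\I$ and $\Gamma_{\I X_a}=\boldsymbol{l}_{X_a}$, so the square defining $1_*(f)$ becomes the naturality square of $\boldsymbol{l}$ for $f\colon X_a\to X_a$, forcing $1_*(f)=f$; symmetrically $\Gamma_{X_a\I}=\boldsymbol{r}_{X_a}$ and naturality of $\boldsymbol{r}$ give $1^*(f)=f$. And (\ref{c5}) follows by inserting these normalizations into (\ref{dlam}): with one of $a,b,c$ equal to $1$ the relevant face of that diagram is, respectively, the triangle for the unit (\ref{mac1}) (for $\lambda_{a,1,b}$) or one of the two triangles (\ref{tm2tr}) (for $\lambda_{1,a,b}$ and $\lambda_{a,b,1}$), each asserting that the two legs agree, hence $\lambda_{\,\cdot\,,\cdot\,,\cdot}=1$.

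The only real obstacle is bookkeeping: the honest verification of (\ref{c11})--(\ref{c13}) and especially (\ref{c2}) requires drawing a sizeable commutative diagram built from one "core" axiom of $\G$ surrounded by several $\Gamma$-naturality squares, and checking that every well-definedness issue (e.g. that $\Gamma$ is genuinely $\Gamma_X$ for the right $X$, and that composites like $X_aX_b\to X_{ab}$ are used consistently) is respected; there is no subtlety beyond diagram chasing, but it is lengthy, which is why I would present (\ref{c11}) in full and indicate that (\ref{c12}), (\ref{c13}) are "entirely analogous", then do (\ref{c2}) as the one genuinely three-dimensional computation and dispatch (\ref{c3})--(\ref{c5}) briefly as above.
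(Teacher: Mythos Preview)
Your proposal is correct and follows essentially the same route as the paper: each of (\ref{c11})--(\ref{c13}) is obtained by surrounding the square (\ref{dlam}) with instances of (\ref{dab}) and naturality/functoriality cells, (\ref{c2}) is the pentagon pasted with $\Gamma$-squares and copies of (\ref{dlam}), (\ref{c3}) is interchange, and (\ref{c4})--(\ref{c5}) come from naturality of the unit constraints together with the three coherence triangles (\ref{mac1}) and (\ref{tm2tr}), exactly as you describe. The paper draws out the large diagrams for (\ref{c11})--(\ref{c13}) and (\ref{c2}) explicitly rather than arguing verbally, but the content is identical; your only slip is a harmless variable clash in the discussion of (\ref{c4}) (you write $f\colon X_a\to X_a$ after setting $a=1$, where a fresh name would be cleaner).
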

\begin{proof} In all the diagrams below, those inner regions labelled with
$(A)$ commute by the naturality of the associativity constraint,
those labelled with $(B)$ are commutative because the tensor product
$\otimes:\G\times \G\to\G$ is a functor, and the others commute by
the references therein.

For any  $a,b,c\in M(\G)$, $h\in \mathrm{Aut}_\G(X_{a})$, $g\in
\mathrm{Aut}_\G(X_{b})$, and $f\in\mathrm{Aut}_\G(X_{c})$, the
conditions in $(\ref{c11})$, $(\ref{c12})$, and $(\ref{c13})$,
follow, respectively, from the commutativity of the outside regions
in the following three diagrams in $\G$:
$$
\xymatrix@C=15pt{
X_{abc}\ar@/^2.3pc/[rrrrr]^{\lambda_{a,b,c}}\ar[d]_{(ab)_*(f)}
\ar@{}@<20pt>[d]|{(\ref{dab})}&X_{ab}X_c\ar[l]_(.4){\Gamma}\ar[d]|{1\,f}&(X_aX_b)X_c
\ar@{}@<16pt>[r]|{(\ref{dlam})}
\ar@{}[rd]|{(A)}\ar@{}[ld]|{(B)}\ar[d]|{(11) f}\ar[l]_-{\Gamma 1}
\ar[r]^{\boldsymbol{a}}&X_a(X_bX_c)\ar@{}[rd]|{(\ref{dab})}\ar[d]|{1(1 f)}
\ar[r]^-{1\Gamma}&X_a X_{bc} \ar[d]|{1 b_*{\!(f)}} \ar[r]^(.4){\Gamma}&
X_{abc}\ar[d]^{a_*b_*(f)} \ar@{}@<-20pt>[d]|{(\ref{dab})}\\X_{abc}\ar@/_2.3pc/[rrrrr]_{\lambda_{a,b,c}}
&X_{ab}X_c\ar[l]_{\Gamma}&(X_aX_b)X_c\ar@{}@<-16pt>[r]|{(\ref{dlam})}
\ar[l]_-{\Gamma 1}\ar[r]^{\boldsymbol{a}}&X_a(X_bX_c)\ar[r]^-{1\Gamma}&X_a X_{bc} \ar[r]^{\Gamma}
&X_{abc}
}
$$
$$
\xymatrix@C=15pt{
X_{abc}\ar@/^2.3pc/[rrrrr]^{\lambda_{a,b,c}}\ar[d]_{c^*a_*(g)}
\ar@{}@<20pt>[d]|{(\ref{dab})}&X_{ab}X_c\ar[l]_(.4){\Gamma}\ar[d]|{a_*(g)1}&(X_aX_b)X_c
\ar@{}@<16pt>[r]|{(\ref{dlam})}
\ar@{}[rd]|{(A)}\ar@{}[ld]|{(\ref{dab})}\ar[d]|{(1 g) 1}
\ar[l]_-{\Gamma
1}\ar[r]^{\boldsymbol{a}}&X_a(X_bX_c)\ar@{}[rd]|{(\ref{dab})}
\ar[d]|{1(g1)}\ar[r]^-{1\Gamma}&X_a X_{bc} \ar[d]|{1 c^*{\!(g)}} \ar[r]^(.4){\Gamma}&
X_{abc}\ar[d]^{a_*c^*(g)} \ar@{}@<-20pt>[d]|{(\ref{dab})}\\
X_{abc}\ar@/_2.3pc/[rrrrr]_{\lambda_{a,b,c}}&X_{ab}X_c\ar[l]_{\Gamma}&(X_aX_b)X_c
\ar@{}@<-16pt>[r]|{(\ref{dlam})}\ar[l]_-{\Gamma 1}
\ar[r]^{\boldsymbol{a}}&X_a(X_bX_c)\ar[r]^-{1\Gamma}&X_a X_{bc} \ar[r]^{\Gamma}&X_{abc}
}
$$
$$
\xymatrix@C=15pt{
X_{abc}\ar@/^2.3pc/[rrrrr]^{\lambda_{a,b,c}}\ar[d]_{c^*b^*(h)}\ar@{}@<20pt>[d]|{(\ref{dab})}&
X_{ab}X_c\ar[l]_(.4){\Gamma}\ar[d]|{b^*\!(h)1}&(X_aX_b)X_c \ar@{}@<16pt>[r]|{(\ref{dlam})}
\ar@{}[rd]|{(A)}\ar@{}[ld]|{(\ref{dab})}\ar[d]|{(h 1)1}\ar[l]_-{\Gamma 1}\ar[r]^{\boldsymbol{a}}&
X_a(X_bX_c)\ar@{}[rd]|{(B)}\ar[d]|{h(11)}\ar[r]^-{1\Gamma}&X_a X_{bc}
 \ar[d]|{h1} \ar[r]^(.4){\Gamma}&X_{abc}\ar[d]^{(bc)^*(h)} \ar@{}@<-20pt>[d]|{(\ref{dab})}\\
X_{abc}\ar@/_2.3pc/[rrrrr]_{\lambda_{a,b,c}}&X_{ab}X_c\ar[l]_{\Gamma}&(X_aX_b)X_c
\ar@{}@<-16pt>[r]|{(\ref{dlam})}
\ar[l]_-{\Gamma 1}\ar[r]^{\boldsymbol{a}}&X_a(X_bX_c)\ar[r]^-{1\Gamma}&X_a X_{bc} \ar[r]^{\Gamma}&X_{abc}
}
$$

The 3-cocycle condition $(\ref{c2})$, for any $a,b,c, d\in M(\G)$,
follows from the commutativity of the following diagram:
$$ \xymatrix@C=-28.5pt@R=18pt{
X_{abcd}\ar@/_2.2pc/@<-5pt>[dddddddd]|(.35){d^*(\lambda_{a,b,c})}\ar[rrrr]^{\lambda_{ab,c,d}}&&&&
X_{abcd}\ar[rrrr]^{\lambda_{a,b,cd}}&&&&X_{abcd}\ar@{<-}@/^2.2pc/@<5pt>[dddddddd]|(.35){a_*(\lambda_{b,c,d})}
\\
X_{abc}X_d\ar@{}[rrrr]|{(\ref{dlam})}\ar@/_1pc/[dddddd]|(.6){\lambda_{a,b,c}1}
\ar@{}@<-9pt>[dddddd]_(.4){(\ref{dab})}\ar[u]_{\Gamma}&
&&& X_{ab}X_{cd}\ar[u]^{\Gamma}\ar@{}[rrrr]|{(\ref{dlam})}
&&&&X_aX_{bcd}\ar[u]^{\Gamma}
\ar@{<-}@/^1pc/[dddddd]|(.6){1\lambda_{b,c,d}}\ar@<9pt>@{}[dddddd]^(.4){(\ref{dab})}
 \\
&(X_{ab}X_c)X_d \ar@{}[ddd]|{(\ref{dlam})}\ar[lu]_{\Gamma
1}\ar[rr]^{\boldsymbol{a}}&&X_{ab}(X_cX_d)
\ar[ru]^{1\Gamma}&&(X_aX_b)X_{cd}\ar[lu]_{\Gamma
1}\ar[rr]^{\boldsymbol{a}}
&&X_a(X_bX_{cd})\ar[ru]^{1\Gamma}\ar@{}[ddd]|{(\ref{dlam})}&\\
 & & & &(X_aX_b)(X_cX_d)\ar[lu]\ar@<-2pt>@{}[lu]^(.6){\Gamma(
11)}\ar[ru]\ar@<2pt>@{}[ru]_(.6){(11) \Gamma}
\ar[rrd]^{\boldsymbol{a}} & & & & \\
&&((X_aX_b)X_c)X_d\ar@{}[uu]_{(A)}\ar[d]^{\boldsymbol{a}1}
\ar[rru]^{\boldsymbol{a}}\ar[luu]\ar@<-2pt>@{}[luu]^(.6){(\Gamma 1)
1}&& &&X_a(X_b(X_cX_d))\ar@{}[uu]^{(A)}\ar[ruu]\ar@<2pt>@{}[ruu]_(.6){1(1\Gamma)}
\ar@{<-}[d]^{1\boldsymbol{a}}&&\\
&&(X_a(X_bX_c))X_d\ar[rrrr]^{\boldsymbol{a}}\ar[ld]_{(1\Gamma)1}&&&&X_a((X_bX_c)X_d))
\ar[rd]^{1(\Gamma 1)}&&\\
&(X_aX_{bc})X_d\ar[rrrrrr]^{\boldsymbol{a}}\ar[ld]^{\Gamma
1}&&&&&&X_{a}(X_{bc}X_{d})\ar[rd]_{1\Gamma}
\\
X_{abc}X_{d}\ar[d]^{\Gamma}&&&&&&&&X_aX_{bcd}\ar[d]_{\Gamma}\\
X_{abcd}\ar[rrrrrrrr]^{\lambda_{a,bc,d}}&&&
&\ar@{}[uuuuuu]|(.2){(\ref{dlam})}|(.4){(A)}|(.6){(\ref{mac1})}|(.99){(B)}&&&&X_{abcd}.
} $$

Since, for any  $a,b\in M(\G)$, $g\in \mathrm{Aut}_\G(X_{a})$, and
$f\in\mathrm{Aut}_\G(X_{b})$, we have the commutative diagram
$$
\xymatrix{
X_{ab}\ar@{}@<16pt>[rrr]|{(\ref{dab})}\ar@/^2.2pc/[rrr]^{a_*(f)}\ar[d]_{b^*(g)}
\ar@{}@<20pt>[d]|{(\ref{dab})}&X_a X_b\ar@{}[rd]|{(B)}\ar[d]_{g 1}\ar[r]^{1 f}\ar[l]_{\Gamma}&
X_a X_b\ar[r]^{\Gamma}\ar[d]^{g 1}&X_{ab}\ar[d]^{b^*(g)}\ar@{}@<-20pt>[d]|{(\ref{dab})}\\
X_{ab}\ar@/_2.2pc/[rrr]_{a_*(f)}\ar@{}@<-16pt>[rrr]|{(\ref{dab})}&X_a X_b\ar[r]_{1 f}\ar[l]_{\Gamma}&
X_a X_b\ar[r]^{\Gamma}& X_{ab},
}
$$
it follows that the homomorphisms $a_*$ and $b^*$ in (\ref{ab}) are
centralizing, that is, the condition in $(\ref{c3})$ holds. Moreover,
when $a=1$ or $b=1$, the naturality of the unit constraints gives
the commutativity of the squares
$$
\xymatrix@R=15pt{\I X_b\ar[d]_{\Gamma=\boldsymbol{l}}\ar[r]^{1 f}&\I X_b\ar[d]^{\Gamma=\boldsymbol{l}}\\ X_{b}\ar[r]^{f}&X_{b},} \hspace{0.6cm}
  \xymatrix@R=15pt{X_a \I\ar[d]_{\Gamma=\boldsymbol{r}}\ar[r]^{g1}&X_a \I\ar[d]^{\Gamma=\boldsymbol{r}}\\ X_{a}\ar[r]^{g}&X_{a},}
$$
whence $ 1_*(f)=f$ and $1^*(g)=g$. That is, the normalization
conditions in $(\ref{c4})$ hold. Furthermore, recalling the
selections (\ref{sel}), it is plain to see that  the normalization
conditions in $(\ref{c5})$ a are direct consequence of the coherence
triangles in $(\ref{mac1})$ and $(\ref{tm2tr})$.  This completes the
proof.\qed
\end{proof}

The Schreier system in $(\ref{assch})$, associated to a monoidal
groupoid, depends on the selection of the cleavage  made for its
construction. However, as we shall prove, different choices produce
{\em equivalent} Schreier systems.

We next explain how each Schreier system gives rise, by the
Grothendieck construction (cf. \cite[1.3]{car94}), to a monoidal
groupoid.

\subsection{The monoidal groupoid defined by a Schreier system} Let $\mathcal{S}=(M,\AAw,\Theta,\lambda)$ be
any given Schreier system. Then, a monoidal groupoid
\begin{equation}\label{gc} \Sigma(\mathcal{S})=(\Sigma(\Sw),\otimes,\mathrm{I},\boldsymbol{a},\boldsymbol{l},
\boldsymbol{r})
\end{equation}
is defined as follows: an object of $\Sigma(\Sw)$ is an element $a\in
M$. If $a\neq b$ are different elements of the monoid $M$, then
there are no morphisms in $\Sigma(\Sw)$ between them,  whereas if
$a=b$, then a morphism $f:a\to a$ is an element $f$ of the group
$A_a$, that is,
$$
\Sigma(\Sw)(a,b)=\left\{
\begin{array}{lll}\emptyset&\text{if} &a\neq b,\\[4pt]
A_a&\text{if}&a=b.\end{array}     \right.
$$

The composition of morphisms is given by the group operation of
$A_a$, that is,
$$
(a\overset{f}\to a)\circ (a\overset{f'}\to a)=(a\overset{f\circ f' }\longrightarrow a).
$$

The tensor product $\otimes:\Sigma(\Sw)\times \Sigma(\Sw)\to
\Sigma(\Sw)$ is defined by
$$
(a\overset{g}\to a)\otimes (b\overset{f}\to b)=\xymatrix@C=45pt{(ab\ar[r]^{a_*(f)\circ b^*(g)}&ab)},
$$
which is a functor thanks to the centralizing condition (\ref{c3}).
In effect, we have

$$
(a\overset{1}\to a)\otimes (b\overset{1}\to b)=\xymatrix@C=45pt{(ab\ar[r]^{a_*(1)\circ b^*(1)}&ab)}
= ab\overset{1}\to ab.
$$
and,  for any $g,g':a\to a$ and $f,f':b\to b$, we have
$$
\begin{array}{lll}(g\circ g')\otimes (f\circ f')&=&a_*(f\circ f')\circ b^*(g\circ g')\ = \ a_*(f)\circ a_*(f')
\circ b^*(g)\circ b^*(g')\\&\overset{(\ref{c3})}=&a_*(f)\circ b^*(g)
\circ a_*(f')\circ b^*(g')\ =\ (g\otimes f)\circ (g'\otimes f').
\end{array}
$$

The associativity isomorphisms are
$$
\lambda_{a,b,c}:(ab)c\to a(bc).
$$
These are natural thanks to conditions (\ref{c11}), (\ref{c12}), and
(\ref{c13}). In effect, for any $h:a\to a$, $g:b\to b$, and $f:c\to
c$,
$$
\begin{array}{lcl} \lambda_{a,b,c}\circ ((h\otimes g)\otimes f)&=&\lambda_{a,b,c}\circ
((a_*(g)\circ b^*(h))\otimes f)\\[4pt] &=& \lambda_{a,b,c} \circ (ab)_*(f)\circ c^*(a_*(g)\circ b^*(h))
\\[4pt] &=&  \lambda_{a,b,c} \circ (ab)_*(f)\circ c^*(a_*(g))\circ c^*(b^*(h))\\ & \overset{(\ref{c11})}=&
a_*(b_*(f))\circ \lambda_{a,b,c}\circ c^*(a_*(g))\circ c^*(b^*(h))\\[4pt]
 & \overset{(\ref{c12})}=&
a_*(b_*(f))\circ a_*(c^*(g))\circ \lambda_{a,b,c}\circ c^*(b^*(h))
\\[4pt]
 & \overset{(\ref{c13})}=&
a_*(b_*(f))\circ a_*(c^*(g))\circ (bc)^*(h)\circ \lambda_{a,b,c}\\[4pt]
& =&
a_*(b_*(f)\circ c^*(g))\circ (bc)^*(h)\circ \lambda_{a,b,c}\\[4pt]
&=& (h\otimes  (b_*(f)\circ c^*(g)))\circ \lambda_{a,b,c}\\[4pt]&=&(h\otimes(g\otimes f))  \circ \lambda_{a,b,c}.
\end{array}
$$
The pentagon coherence condition in (\ref{mac1}) just says that, for
any $a,b,c,d\in M$, the diagram
$$
\xymatrix{((ab)c)d
\ar[r]^{\lambda_{ab,c,d}}\ar[d]_{ d^*(\lambda_{a,b,c})}&(ab)(cd) \ar[r]^{\lambda_{a,b,cd}}& a(b(cd))
\\ (a(bc))d\ar[rr]^{\lambda_{a,bc,d}}&&a((bc)d)\ar[u]_{a_*(\lambda_{b,c,d})}}
$$
must be commutative, which holds because of the 3-cocycle condition
(\ref{c2}).

 The unit object is $\I=1$, the unit element
of the monoid $M$, and the unit constraints are both identities,
that is, for any $a\in M$,
$$\boldsymbol{l}_a=1=\boldsymbol{r}_a:a\to a.$$
These are natural due to the equalities in $(\ref{c4})$. In effect,
for any $f:a\to a$, we have $$\begin{array}{l} \boldsymbol{l}_a\circ
(1\otimes f)= 1\otimes f= 1_*(f)\circ
a^*(1)\overset{(\ref{c4})}=f\circ
1=f=f\circ \boldsymbol{l}_a,\\[5pt]
\boldsymbol{r}_a\circ (f\otimes 1)= f\otimes 1= a_*(1)\circ
1^*(f)\overset{(\ref{c4})}= 1\circ f=f= f \circ \boldsymbol{r}_a.
\end{array}
$$
 The coherence triangle for the unit in
$(\ref{mac1})$ commutes owing to the normalization condition
$\lambda_{a,1,c}=1$ in (\ref{c5}).\qed

As we will show, both constructions $\Sw\mapsto \Sigma(\Sw)$, as
above, and $\G\mapsto \Delta(\G)$, as  in (\ref{assch}), are
suitable for expressing the strong relationship between Schreier
systems and monoidal groupoids. We need the notions of {\em
morphisms} between Schreier systems and their {\em deformations},
which we establish below.

\subsection{The 2-category of Schreier systems} The Schreier systems introduced in Definition \ref{defSch} (or
non-abelian 3-cocycles of monoids) are the objects of a 2-category
in which all 2-cells are invertible, denoted by
$$\mathbf{Z^3_{\mathrm{n\text{-}ab}}Mnd},$$
whose cells and their compositions are defined as follows:

\subsubsection{Morphisms of Schreier systems}\label{morscsys}

 If $\mathcal{S}= (M,\AAw,\Theta,\lambda)$ and
$\mathcal{S'}= (M',\AAw',\Theta',\lambda')$ are two Schreier
systems, then a {\em morphism}
$$\wp=(p,\mathbbm{q}\,,\varphi):\Sw\to\Sw'$$ consists of the
following data:
\begin{itemize}
\item[$\bullet$] a monoid homomorphism $p:M\to M'$,
\item[$\bullet$] a family of group homomorphisms $\mathbbm{q}=\xymatrix@C=15pt{\big(A_a\ar[r]^-{q_a}&
A_{p(a)}^\prime\big)_{\!a\in M}}$,
\item[$\bullet$] a family of elements $\varphi=\big(\varphi_{a,b}\in A'_{p(ab)}\big)_{\!a,b\in M}$,
\end{itemize}
satisfying the following three conditions:

$\bullet$ For any $a,b\in M$, $g\in A_a$, and $f\in A_b$,
\begin{equation}\label{cms1}\begin{array}{l}
\varphi_{a,b}\circ p(a)_*(q_b(f)) \circ \varphi_{a,b}^{-1}= q_{ab}(a_*(f)) ,\\[5pt]
\varphi_{a,b}\circ p(b)^*(q_a(g)) \circ \varphi_{a,b}^{-1}= q_{ab}(b^*(g)).
\end{array}
\end{equation}

$\bullet$ For any $a,b,c\in M$,
\begin{equation}\label{cms3}
q_{abc}(\lambda_{a,b,c})\circ \varphi_{ab,c}\circ p(c)^*(\varphi_{a,b})=\varphi_{a,bc}\circ p(a)_*(\varphi_{b,c})\circ \lambda'_{p(a),p(b),p(c)}.
\end{equation}

$\bullet$ $\varphi$ is normalized, that is,  \begin{equation}\label{cms4}
\varphi_{1,1}=1.\end{equation}

 Observe that, taking $b=c=1$ in the above equality
$(\ref{cms3})$, we deduce that, for any $a\in M$,  $\varphi_{a,1}
\circ \varphi_{a,1}=\varphi_{a,1}\circ
p(a)_*(\varphi_{1,1})=\varphi_{a,1}$ in the group $A'_{p(a)}$,
whence $\varphi_{a,1}=1$. Similarly, $\varphi_{1,a}=1$.

\subsubsection{Deformations}\label{defmorSchr}
Let $\wp=(p,\mathbbm{q}\,,\varphi):\Sw\to\Sw'$ and
$\bar{\wp}=(\bar{p},\bar{\mathbbm{q}},\bar{\varphi}):\Sw\to\Sw'$ be
morphisms between Schreier systems $\Sw=(M,\AAw,\Theta,\lambda)$ and
$\Sw'=(M',\AAw',\Theta',\lambda')$.

If $p\neq \bar{p}$ are different homomorphisms, then there is no deformation between
$\wp$ and $\bar{\wp}$ in $\mathbf{Z^3_{\mathrm{n\text{-}ab}}Mnd}$.

If $p=\bar{p}$, then  a {\em deformation}
$$\xymatrix @C=22pt{\Sw
\ar@/^0.7pc/[rr]^{\wp=(p,\mathbbm{q}\,,\varphi)}
 \ar@/_0.7pc/[rr]_{\bar{\wp}=(p,\bar{\mathbbm{q}},\bar{\varphi})}\ar@{}[rr]|{\Downarrow\,\delta} &
&\Sw'}$$ is a family of elements $ \delta=\big(\delta_a\in
A'_{p(a)}\big)_{a\in M} $ satisfying the following two conditions:

$\bullet$ For any $a\in M$ and $f\in A_a$,
\begin{equation}\label{cbms1} \delta_a^{-1}\circ \bar{q}_a(f)\circ \delta_a= q_a(f).\end{equation}

$\bullet$ For any $a,b\in M$,
\begin{equation}\label{cbms2} \delta_{ab}\circ \varphi_{a,b}=\bar{\varphi}_{a,b}\circ p(a)_*(\delta_b)\circ p(b)^*(\delta_a).
\end{equation}

Observe that, taking $a=b=1$ in the above equality $(\ref{cbms2})$,
we deduce that $\delta_1=\delta_1\circ \delta_1$ in the group
$A'_1$, whence $\delta_1=1$.

\subsubsection{Vertical composition of deformations}
For any Schreier systems $\mathcal{S}= (M,\AAw,\Theta,\lambda)$ and
$\mathcal{S'}= (M',\AAw',\Theta',\lambda')$, the vertical composition
in the 2-category $\mathbf{Z^3_{\mathrm{n\text{-}ab}}Mnd}$ of
deformations
\begin{equation}\label{ddforv}\xymatrix @C=22pt{\Sw
\ar@/^1.5pc/[rr]^{\wp=(p,\mathbbm{q}\,,\varphi)}\ar[rr]|{(p,\bar{\mathbbm{q}},\bar{\varphi})}
 \ar@/_1.5pc/[rr]_{\bar{\bar{\wp}}=(p,\bar{\bar{\mathbbm{q}}},\bar{\bar{\varphi}})}
 \ar@<10pt>@{}[rr]|{\Downarrow\,\delta}\ar@<-10pt>@{}[rr]|{\Downarrow\,\bar{\delta}} &
&\Sw'}\end{equation} is the deformation $\bar{\delta}\circ \delta:\wp\Rightarrow\bar{\bar{\wp}}$
 obtained by pointwise multiplication, that is,
\begin{equation}\label{fddforv}
\bar{\delta}\circ\delta=\big(\bar{\delta}_a\circ\delta_a\in A'_{p(a)}\big)_{a\in M}.
\end{equation}
The identity deformation on each morphism $\wp:\Sw\to\Sw'$ is
$$1_\wp=\big(1\in A'_{p(a)}\big)_{a\in M}:\wp\Rightarrow\wp.$$

Every deformation $\delta:\wp\Rightarrow\wp'$ is invertible, with
inverse $\delta^{-1}=(\delta_a^{-1}\in A'_{p(a)})_{a\in M}$.
Therefore, the
hom-categories $\mathbf{Z^3_{\mathrm{n\text{-}ab}}Mnd}(\Sw,\Sw')$
are groupoids.

\subsubsection{Horizontal composition of morphisms}
 For  Schreier
systems $\Sw=(M,\AAw,\Theta,\lambda)$,
$\Sw'=(M',\AAw',\Theta',\lambda')$,
$\Sw''=(M'',\AAw'',\Theta'',\lambda'')$, the horizontal composition
of two morphisms
$$
\xymatrix@C=55pt{\Sw\ar[r]^{\wp=(p,\mathbbm{q}\,,\varphi)}&\Sw'\ar[r]^{\wp'=(p',\mathbbm{q}',\varphi')}&\Sw''}
$$
is the morphism
\begin{equation}\label{cp'p}\wp'\wp=(p'p,\mathbbm{q}'\mathbbm{q},\varphi\varphi'):\Sw\to
\Sw'',\end{equation} where $p'p:M\to M''$ is the composite of $p$ and $p'$, and
$$\begin{array}{l}
\mathbbm{q}'\mathbbm{q}=\big(q'_{p(a)}q_a:A_a\to A''_{p'\!p\,(a)}\big)_{a\in M},\\[6pt]
\varphi\varphi'=\big(q'_{p(ab)}(\varphi_{a,b})\circ
\varphi'_{p(a),p(b)}\in A''_{p'\!p(ab)}\big)_{a,b\in M}.\end{array}
$$

The identity morphism on a Schreier system
$\Sw=(M,\AAw,\Theta,\lambda)$ is
\begin{equation}\label{cid}1_\Sw=(1_M,\mathbbm{1}_{\AAw},1):\Sw\to\Sw,\end{equation} where $1_M$ is the identity
map on $M$, $\mathbbm{1}_{\AAw}=\big(1_{A_a}\big)_{a\in M}$,
and $1=\big(1\in A_{ab}\big)_{a,b\in M}$.

\subsubsection{Horizontal composition of deformations} The
horizontal composition of deformations
\begin{equation}\label{chd}\xymatrix @C=22pt {\Sw  \ar@/^0.8pc/[rr]^{\wp=(p,\mathbbm{q}\,,\varphi)}
 \ar@/_0.8pc/[rr]_{\bar{\wp}=(p,\bar{\mathbbm{q}},\bar{\varphi})}\ar@{}[rr]|{\Downarrow\,\delta} &
&\Sw' \ar@/^0.8pc/[rr]^{\wp'=(p',\mathbbm{q}',\varphi')}
 \ar@/_0.8pc/[rr]_{\bar{\wp}'=(p',\bar{\mathbbm{q}}',\bar{\varphi}')}\ar@{}[rr]|{\Downarrow\,\delta'} &
&\Sw''}\end{equation} is the deformation $\delta'\delta:\wp'\wp\Rightarrow\bar{\wp}'\bar{\wp}$ defined by
\begin{equation}\label{cdeltas}\delta'\delta=\big(\delta'_{p(a)}\circ
q'_{p(a)}(\delta_a)\in A''_{p'\!p(a)}\big)_{a\in M}.\end{equation}

For later use, we prove here the lemma below.
\begin{lemma}\label{iseqin} For any morphism
$(p,\mathbbm{q}\,,\varphi):(M,\AAw,\Theta,\lambda)\to
(M',\AAw',\Theta',\lambda')$ in the $2$-category
$\mathbf{Z^3_{\mathrm{n\text{-}ab}}Mnd}$, the following statements
are equivalent:

$(i)$ $(p,\mathbbm{q}\,,\varphi)$ is an isomorphism.

$(ii)$ $(p,\mathbbm{q}\,,\varphi)$ is an equivalence.

$(iii)$ The homomorphisms $p:M\to M'$ and $q_a:A_a\to A'_{p(a)}$,
$a\in M$, are all isomorphisms.
\end{lemma}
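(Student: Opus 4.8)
The plan is to establish the cycle of implications $(i)\Rightarrow(ii)\Rightarrow(iii)\Rightarrow(i)$, of which the first is trivial (an isomorphism in any $2$-category is an equivalence) and the genuine content lies in $(ii)\Rightarrow(iii)$ and $(iii)\Rightarrow(i)$. For $(ii)\Rightarrow(iii)$, I would argue most cleanly by passing through the biequivalence $\Sigma:\mathbf{Z^3_{\mathrm{n\text{-}ab}}Mnd}\to\mathbf{MonGpd}$ together with Proposition~\ref{saa}: an equivalence $\wp=(p,\mathbbm{q},\varphi)$ in the source $2$-category is carried to a monoidal equivalence $\Sigma(\wp):\Sigma(\Sw)\to\Sigma(\Sw')$, hence by Proposition~\ref{saa} the underlying functor is full, faithful and essentially surjective. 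But in $\Sigma(\Sw)$ the set of isomorphism classes of objects is exactly $M$ and $\mathrm{Aut}(a)=A_a$, with $\Sigma(\wp)$ acting as $p$ on objects and as $q_a$ on the hom-group at $a$; essential surjectivity of a functor between such ``totally disconnected'' groupoids forces $p$ to be bijective, and fullness plus faithfulness of $\Sigma(\wp)$ on the automorphism group $A_a$ forces each $q_a$ to be bijective. A monoid homomorphism that is a bijection is a monoid isomorphism, and a bijective group homomorphism is a group isomorphism, giving $(iii)$. (If one prefers to avoid invoking $\Sigma$ so early in the text, the same conclusion follows directly: if $(p',\mathbbm{q}',\varphi')$ is a quasi-inverse, then on objects $p'p$ and $pp'$ equal the identity maps, since the $2$-cells witnessing the equivalences must have underlying object-maps equal to identities — recall that a deformation exists only between morphisms with the \emph{same} underlying monoid homomorphism — so $p$ is a bijection; and then the defining relation $(\ref{cbms1})$ for those deformations shows each $q'_{p(a)}q_a$ and $q_{p'(a')}q'_{a'}$ is an inner automorphism, in particular bijective, so each $q_a$ is bijective.)

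For $(iii)\Rightarrow(i)$, I would construct an explicit inverse morphism $(p^{-1},\mathbbm{r},\psi):\Sw'\to\Sw$. Set $\mathbbm{r}=(r_{a'})_{a'\in M'}$ with $r_{a'}=q_{p^{-1}(a')}^{-1}:A'_{a'}\to A_{p^{-1}(a')}$. To define $\psi=(\psi_{a',b'})_{a',b'\in M'}$, I would use the relation $(\ref{cms3})$ satisfied by $(p,\mathbbm{q},\varphi)$: writing $a=p^{-1}(a')$, $b=p^{-1}(b')$ and specializing, one can solve for the unique element $\psi_{a',b'}\in A_{ab}$ making $(\ref{cms3})$ hold for the candidate inverse — explicitly $\psi_{a',b'}$ is forced to be $q_{ab}^{-1}(\varphi_{a,b})^{-1}$ up to conjugation/composition with $\lambda$-terms, and normalization $(\ref{cms4})$ together with the remark that $\varphi_{a,1}=\varphi_{1,a}=1$ gives $\psi_{1,1}=1$. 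Then I would verify conditions $(\ref{cms1})$, $(\ref{cms3})$, $(\ref{cms4})$ for $(p^{-1},\mathbbm{r},\psi)$ — each is a direct algebraic translation of the corresponding condition for $(p,\mathbbm{q},\varphi)$, obtained by applying $q^{-1}$ and relabelling via $p^{-1}$ — and finally check, using the composition formulas $(\ref{cp'p})$ and $(\ref{cid})$, that the two composites equal the respective identity morphisms \emph{on the nose} (not merely up to deformation): on objects this is clear since $p^{-1}p=1_M$ and $pp^{-1}=1_{M'}$; on the group-homomorphism part it is clear since $r_{p(a)}q_a=1_{A_a}$; and on the $\varphi$-part the telescoping in $(\ref{cp'p})$ collapses precisely because $\psi$ was chosen to cancel $\varphi$.

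The main obstacle I anticipate is the bookkeeping in the $(iii)\Rightarrow(i)$ step: getting the $\psi_{a',b'}$ \emph{right} so that the composite $\varphi$-families in $(\ref{cp'p})$ genuinely vanish (equal the constant family $1$) rather than only being deformation-equivalent to it, and checking that this chosen $\psi$ still satisfies $(\ref{cms3})$, which mixes $\psi$ with the $\lambda,\lambda'$ cocycle data and the twisting homomorphisms $a_*,b^*$. The cleanest route is probably to prove first the weaker statement ``$(iii)\Rightarrow$ $(p,\mathbbm{q},\varphi)$ is an equivalence'' — for which any $\psi$ making $(p^{-1},\mathbbm{r},\psi)$ a legitimate morphism will do, the composites then being isomorphic to identities via the deformations guaranteed by the already-proved biequivalence or by a short direct computation of the witnessing $\delta$'s — and then upgrade ``equivalence'' to ``isomorphism'' by the general principle, valid in $\mathbf{Z^3_{\mathrm{n\text{-}ab}}Mnd}$ because deformations can only connect morphisms over the same $p$, that any equivalence here is automatically an isomorphism; this last point is essentially the assertion made just before the lemma that ``every equivalence is actually an isomorphism'', so it may be quoted rather than reproven.
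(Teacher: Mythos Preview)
Your proposal is essentially correct and, in its parenthetical/direct arguments, matches the paper's proof closely. A few remarks:

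For $(ii)\Rightarrow(iii)$, your primary route via $\Sigma$ and Proposition~\ref{saa} is logically fine but forward-references Theorem~\ref{mt1}, which in the paper is proved \emph{after} this lemma; the paper instead uses exactly your parenthetical direct argument (deformations only exist over the same $p$, forcing $p'p=1_M$ and $pp'=1_{M'}$; then condition~(\ref{cbms1}) makes $q'_{p(a)}q_a$ and $q_aq'_{p(a)}$ inner automorphisms, hence bijective).

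For $(iii)\Rightarrow(i)$, the paper does construct an explicit inverse, and it is simpler than you fear: with $a=p^{-1}(a')$, $b=p^{-1}(b')$, one takes $\psi_{a',b'}=q_{ab}^{-1}(\varphi_{a,b}^{-1})$, with \emph{no} $\lambda$-terms appearing. Plugging into the composition formula~(\ref{cp'p}) gives, for one composite, $q'_{p(ab)}(\varphi_{a,b})\circ\varphi'_{p(a),p(b)}=q_{ab}^{-1}(\varphi_{a,b})\circ q_{ab}^{-1}(\varphi_{a,b}^{-1})=1$, and symmetrically for the other; so the composites are identities on the nose. Condition~(\ref{cms3}) for $(p^{-1},\mathbbm{r},\psi)$ then follows by applying $q_{abc}^{-1}$ to~(\ref{cms3}) for $(p,\mathbbm{q},\varphi)$ and relabelling, with the $\lambda$-terms simply swapping sides.

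Finally, your fallback strategy for $(iii)\Rightarrow(i)$ --- prove equivalence first, then ``upgrade'' via the principle that every equivalence in $\mathbf{Z^3_{\mathrm{n\text{-}ab}}Mnd}$ is an isomorphism --- is circular: that principle is precisely the implication $(ii)\Rightarrow(i)$, which in your scheme is obtained as $(ii)\Rightarrow(iii)\Rightarrow(i)$ and so cannot be invoked before $(iii)\Rightarrow(i)$ is established. Stick with the explicit inverse.
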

\begin{proof} $(i)\Rightarrow (ii)$ is obvious.

$(ii)\Rightarrow (iii)$. First observe that, for any Schreier system
$\Sw=(M,\AAw,\Theta,\lambda)$, a morphism $\wp:\Sw\to \Sw$ with a
deformation $\delta:\wp\Rightarrow 1_\Sw$ is necessarily of the form
$\wp=(1_M,\mathbbm{q}(\delta),\varphi(\delta))$, for some family
$\delta=(\delta_a\in A_a)_{a\in M}$, with $\delta_1=1$, where
$\mathbbm{q}(\delta)=(q(\delta)_a:A_a\to A_a)_{a\in M}$ consists of
the inner automorphisms given by $q(\delta)_a (f)=\delta_a^{-1}
\circ f \circ \delta_a$, and
$\varphi(\delta)=(\varphi(\delta)_{a,b}\in A_{ab})_{a,b\in M}$
consists of the elements obtained by the formula
$\varphi(\delta)_{a,b}=\delta_{ab}^{-1}\circ a_*(\delta_b)\circ
b^*(\delta_a)$.

Then, the existence of a morphism $(p',\mathbbm{q}',\varphi'):\Sw'\to
\Sw$, where $\Sw$ is as above and $\Sw'=(M',\AAw',\Theta',\lambda')$,
with deformations
$\delta:(p',\mathbbm{q}',\varphi')(p,\mathbbm{q},\varphi)\Rightarrow
1_\Sw$ and ${\delta':(p,\mathbbm{q},\varphi)(p',\mathbbm{q}',\varphi')
\Rightarrow 1_{\Sw'}}$, implies that $ p'p=1_M,\ pp'=1_{M'}, $ so $p$
is an isomorphism, and also that
$$
q'_{p(a)}q_a=q(\delta)_a, \ \  q_aq'_{p(a)}=q(\delta')_{p(a)},$$ for
all $a\in M$. Hence  $q_a$ and $q'_{p(a)}$ are both isomorphisms
since $q(\delta)_a$ and  $q(\delta')_{p(a)}$ are automorphisms.

$(iii)\Rightarrow (i)$. The inverse
$(p,\mathbbm{q}\,,\varphi)^{-1}=(p',\mathbbm{q}',\varphi')$ is given
by taking $$p'=p^{-1}, \ \
\mathbbm{q}'=\big(q_{p'(a')}^{-1}\big)_{a'\in M'},\ \
\varphi'=\big(q'_{a'b'}(\varphi_{p'(a'),p'(b')}^{-1})\big)_{a',b'\in
M'}.$$ \qed
\end{proof}
\subsection{The classifying biequivalence}
The following theorem, where it is stated that the 2-categories of
Schreier systems and monoidal groupoids  are biequivalent,  is the
main result of this section.

\begin{theorem}[Classification of monoidal groupoids]\label{mt1}
The assignment $\Sw\mapsto \Sigma(\Sw)$, given by the monoidal
groupoid construction $(\ref{gc})$, is the function on objects of a
$2$-functor
\begin{equation}\label{tsigma}\xymatrix@C=16pt{\Sigma:\mathbf{Z^3_{\mathrm{n\text{-}ab}}Mnd}
\ar[r]\ar@{}@<-2.5pt>[r]^(.51){\approx}&
\mathbf{MonGpd},}\end{equation} which establishes a biequivalence
between the $2$-category of Schreier systems and the $2$-category of
monoidal groupoids. More precisely (cf. \cite[p. 570]{street}),  for
any two Schreier systems $\Sw$ and $\Sw'$, the functor
\begin{equation}\label{lsigma}\xymatrix@C=16pt{\Sigma:
\mathbf{Z^3_{\mathrm{n\text{-}ab}}Mnd}(\Sw,\Sw')\ar[r]\ar@{}@<-2.5pt>[r]^(.46){\thicksim}&
\mathbf{MonGpd}(\Sigma(\Sw),\Sigma(\Sw'))}\end{equation} is an
equivalence of groupoids, and for any monoidal groupoid $\G$, there
exists  a monoidal equivalence
\begin{equation}\label{psigma}
\xymatrix@C=16pt{J_\G:\Sigma(\Delta(\G))\ar[r]\ar@{}@<-2.5pt>[r]^(.65){\thicksim}& \G,}
\end{equation}
where $\Delta(\G)$ is the Schreier system $(\ref{assch})$ associated
to $\G$.
\end{theorem}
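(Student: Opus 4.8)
The plan is to verify the three ingredients of a biequivalence in the sense of \cite[p.~570]{street}: that $\Sigma$ extends to a $2$-functor, that each local functor $(\ref{lsigma})$ is an equivalence of groupoids, and that for each monoidal groupoid $\G$ there is a monoidal equivalence $\Sigma(\Delta(\G))\to\G$. The monoidal groupoid $\Sigma(\Sw)$ itself has already been shown to be well defined, so only the action of $\Sigma$ on $1$- and $2$-cells and the two equivalence statements remain.

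First I would define $\Sigma$ on morphisms and deformations. To a morphism $\wp=(p,\mathbbm{q}\,,\varphi):\Sw\to\Sw'$ I assign the strictly unitary monoidal functor $\Sigma(\wp):\Sigma(\Sw)\to\Sigma(\Sw')$ acting as $p$ on objects, as $q_a$ on the hom-group $A_a=\mathrm{Aut}_{\Sigma(\Sw)}(a)$, with structure isomorphism $\varphi_{a,b}\in A'_{p(ab)}=\mathrm{Aut}_{\Sigma(\Sw')}(p(a)p(b))$ at $(a,b)$ and $\varphi_0=1$. Unravelling definitions, the naturality of $\varphi_{a,b}$ is exactly $(\ref{cms1})$, the hexagon $(\ref{ecf})$ for $\Sigma(\wp)$ is exactly $(\ref{cms3})$, and strict unitarity together with the unit axioms $(\ref{ecf2})$ follow from $(\ref{cms4})$ and the consequence $\varphi_{a,1}=1=\varphi_{1,a}$ noted after it. To a deformation $\delta=(\delta_a)$ I assign the monoidal natural isomorphism $\Sigma(\delta)$ with component $\delta_a:p(a)\to p(a)$ at $a$; its naturality is $(\ref{cbms1})$ and the coherence square $(\ref{cfnt})$ is $(\ref{cbms2})$. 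Comparing the composition laws $(\ref{cp'p})$, $(\ref{cid})$, $(\ref{fddforv})$, $(\ref{cdeltas})$ with the corresponding ones $(\ref{pspps})$, $(\ref{psps})$ in $\mathbf{MonGpd}$ --- a purely mechanical check --- shows that $\Sigma$ preserves identities, vertical and horizontal composition, hence is a $2$-functor.

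Next I would prove that $(\ref{lsigma})$ is an equivalence of groupoids. It is faithful and full on $2$-cells: since distinct objects of $\Sigma(\Sw')$ admit no morphism between them, any monoidal natural isomorphism $\Sigma(\wp)\Rightarrow\Sigma(\bar\wp)$ forces $p=\bar p$ (its components are isomorphisms $p(a)\cong\bar p(a)$, hence equalities of objects) and then amounts to a family $\delta_a\in A'_{p(a)}$ which, by naturality and $(\ref{cfnt})$, obeys $(\ref{cbms1})$--$(\ref{cbms2})$, i.e.\ is a deformation (and when $p\neq\bar p$ there are none on either side); conversely $\Sigma$ is plainly bijective on such families. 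For essential surjectivity, let $F:\Sigma(\Sw)\to\Sigma(\Sw')$ be any monoidal functor. By Lemma \ref{uslem} we may replace $F$ by the monoidally isomorphic, strictly unitary functor $F^{\mathrm u}$. Again because $\Sigma(\Sw')$ is skeletal as a category, the object function $p:M\to M'$ of $F^{\mathrm u}$ satisfies $p(ab)=p(a)p(b)$ --- there being a morphism $\varphi^{F^{\mathrm u}}_{a,b}:p(a)p(b)\to p(ab)$ --- and $p(1)=1$ by strict unitarity, so $p$ is a monoid homomorphism; the arrow part of $F^{\mathrm u}$ restricts to homomorphisms $q_a:A_a\to A'_{p(a)}$, and the structure isomorphisms supply elements $\varphi_{a,b}\in A'_{p(ab)}$. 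Naturality of $\varphi^{F^{\mathrm u}}$ together with the axioms $(\ref{ecf})$--$(\ref{ecf2})$ translate verbatim into $(\ref{cms1})$--$(\ref{cms4})$, so $\wp=(p,\mathbbm{q}\,,\varphi)$ is a morphism of Schreier systems with $\Sigma(\wp)=F^{\mathrm u}$ monoidally isomorphic to $F$.

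Finally I would construct the monoidal functor $J_\G:\Sigma(\Delta(\G))\to\G$. On objects it sends $a\in M(\G)$ to the chosen representative $X_a$; on an arrow $f:a\to a$, that is on $f\in\mathrm{Aut}_\G(X_a)=\Sigma(\Delta(\G))(a,a)$, it is the identity assignment; its structure isomorphism at $(a,b)$ is the chosen morphism $\Gamma:X_aX_b\to X_{ab}$, and $\varphi_0=1_{\I}$ since $X_1=\I$ by $(\ref{sel})$. Naturality of $\Gamma$ is $(\ref{dab})$ together with $(\ref{c3})$, the hexagon $(\ref{ecf})$ for $J_\G$ is precisely the defining square $(\ref{dlam})$ of $\lambda_{a,b,c}$, and the unit axioms $(\ref{ecf2})$ are the selections $(\ref{sel})$; hence $J_\G$ is a monoidal functor. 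Its underlying functor is fully faithful by the construction of the hom-sets of $\Sigma(\Delta(\G))$ and essentially surjective because every object of $\G$ is isomorphic to its representative $X_a$, so by Proposition \ref{saa} it is a monoidal equivalence. The difficulty throughout is bookkeeping rather than conceptual: the point requiring care is to keep the normalization (strict unitarity, via Lemma \ref{uslem}) under control, so that the object map of a monoidal functor between the skeletal monoidal groupoids $\Sigma(\Sw)$, $\Sigma(\Sw')$ is honestly a monoid homomorphism and the lists of conditions in Definition \ref{defSch}, $(\ref{cms1})$--$(\ref{cms4})$, and $(\ref{cbms1})$--$(\ref{cbms2})$ match the monoidal-category axioms on the nose.
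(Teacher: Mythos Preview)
Your proposal is correct and follows essentially the same route as the paper: define $\Sigma$ on $1$- and $2$-cells as you describe, check $2$-functoriality by matching the composition formulas, prove the local functor is full, faithful, and essentially surjective (using Lemma~\ref{uslem} and the skeletality of $\Sigma(\Sw')$ exactly as you do), and finally construct $J_\G$ via $a\mapsto X_a$ with structure isomorphisms $\Gamma_{X_aX_b}$, appealing to Proposition~\ref{saa}. The only cosmetic difference is that the paper verifies naturality of $\Gamma$ by decomposing $g\otimes f=(1\otimes f)\circ(g\otimes 1)$ and invoking functoriality of $\otimes$ in $\G$ directly, rather than citing $(\ref{c3})$, but your argument is equally valid.
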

\begin{proof}
We have already described $\Sigma$ on objects of the 2-category
$\mathbf{Z^3_{\mathrm{n\text{-}ab}}Mnd}$; its effect on morphisms
and deformations is as follows:
\subsubsection{$\Sigma$ on morphisms} Let $\mathcal{S}=
(M,\AAw,\Theta,\lambda)$, $\mathcal{S'}= (M',\AAw',\Theta',\lambda')$
be Schreier systems. Then, the 2-functor $\Sigma$ carries any
morphism $\wp=(p,\mathbbm{q}\,,\varphi):\Sw\to\Sw'$ to the strictly
unitary monoidal functor $ \Sigma(\wp):\Sigma(\Sw)\to \Sigma(\Sw') $
given  by
\begin{equation}\label{defsig}
\big(a\overset{f}\to a\big)\ \mapsto \big(p(a)\overset{q_a(f)}\longrightarrow p(a)\big),
\end{equation}
and whose structure isomorphisms are
\begin{equation}\label{pab}
\varphi_{a,b}:p(a)p(b)\to p(ab),
\end{equation}
which are well defined since $p(a)p(b)=p(ab)$ and $\varphi_{a,b}\in
A'_{p(ab)}$, for any $a,b\in M$.

Since the maps $q_a:A_a\to A'_{p(a)}$ are homomorphisms, it follows
that $\Sigma(\wp)$ is a functor. Furthermore, the isomorphisms
$(\ref{pab})$ are natural since, for any morphisms $f:b\to b$ and
$g:a\to a$ in $\Sigma(\Sw)$, the squares in $\Sigma(\Sw')$
\begin{equation}\label{tsqus}\begin{array}{c}
\xymatrix{p(a)p(b)\ar[r]^{\varphi_{a,b}}\ar[d]_{p(a)_*(q_b(f))}&p(ab)\ar[d]^{q_{ab}(a_*(f))}\\
p(a)p(b)\ar[r]^{\varphi_{a,b}}&p(ab)}\hspace{0.3cm}
\xymatrix{p(a)p(b)\ar[r]^{\varphi_{a,b}}\ar[d]_{p(b)^*(q_a(g))}&p(ab)\ar[d]^{q_{ab}(b^*(f))}\\
p(a)p(b)\ar[r]^{\varphi_{a,b}}&p(ab)}\end{array}
\end{equation}
commute owing to condition $(\ref{cms1})$. The coherence condition
$(\ref{ecf})$ for $\Sigma(\wp)$ just says that the diagrams
\begin{equation}\label{conefc}\begin{array}{c}
\xymatrix@C=45pt{(p(a)p(b))p(c)\ar[d]_{\lambda'_{p(a),p(b),p(c)}}\ar[r]^-{p(c)^*(\varphi_{a,b})}&
p(ab)p(c)\ar[r]^{\varphi_{ab,c}}
&p((ab)c)\ar[d]^{q_{abc}(\lambda_{a,b,c})}\\p(a)(p(b)p(c))\ar[r]^-{p(a)_*(\varphi_{b,c})}&p(a)p(bc)
\ar[r]^{\varphi_{a,bc}}&p(a(bc))
}\end{array}
\end{equation}
must commute, which follows from $(\ref{cms3})$. Finally, conditions
$(\ref{ecf2})$ are both a consequence of the normality condition
$(\ref{cms4})$ of $\varphi$, that is, of the equalities
${\varphi_{a,1}=1=\varphi_{1,b}}$.

For $\wp'=(p',\mathbbm{q}',\varphi'):\Sw'\to\Sw''$ another Schreier
system morphism, the composite monoidal functor
$\Sigma(\wp')\Sigma(\wp):\Sw\to \Sw''$ is given by
$$
\Sigma(\wp')\Sigma(\wp)\big(a\overset{f}\to a\big)=\Sigma(\wp')
\big(p(a)\overset{q_a(f)}\longrightarrow p(a)\big)=
\big(\xymatrix@C=40pt{p'p(a)\ar[r]^{q'_{p(a)}(q_a(f))}&p'p(a)}),
$$
together with the structure isomorphisms obtained by composing in
$\Sigma(\Sw'')$
$$
\xymatrix@C=40pt{p'p(a)p'p(b)\ar[r]^{\varphi'_{p(a),p(b)}}&p'(p(a)p(b))\ar[r]^-{q'_{p(ab)}(\varphi_{a,b})}&p'p(ab)}.
$$
Hence, taking into account the definition of the composition
$\wp'\wp$ in $(\ref{cp'p})$ and the definition of $\Sigma$, simple
comparison shows that $\Sigma(\wp')\Sigma(\wp)=\Sigma(\wp'\wp)$.
Moreover, it is straightforward to see that  $\Sigma$ carries
identity morphisms on Schreier systems
$1_\Sw=(1_M,\mathbbm{1}_{\AAw},1)$, see $(\ref{cid})$, to
identity monoidal functors; that is, $\Sigma(1_\Sw)=1_{\Sigma(\Sw)}$
for any Schreier system $\Sw$. Therefore,
$\Sigma:\mathbf{Z^3_{\mathrm{n\text{-}ab}}Mnd}\to \mathbf{MonGpd}$
is indeed a functor.

\subsubsection{$\Sigma$ on deformations} Given Schreier systems $\Sw$ and
$\Sw'$ as above, any deformation
$$\xymatrix @C=22pt{\Sw
\ar@/^0.7pc/[rr]^{\wp=(p,\mathbbm{q}\,,\varphi)}
 \ar@/_0.7pc/[rr]_{\bar{\wp}=(p,\bar{\mathbbm{q}},\bar{\varphi})}\ar@{}[rr]|{\Downarrow\,\delta} &
&\Sw'}$$ is mapped by the 2-functor $\Sigma$ to the isomorphism of
monoidal functors
$$\xymatrix @C=22pt{\Sigma(\Sw)
\ar@/^0.8pc/[rr]^{\Sigma(\wp)}
 \ar@/_0.8pc/[rr]_{\Sigma(\bar{\wp})}\ar@{}[rr]|{\Downarrow\,\Sigma(\delta)} &
&\Sigma(\Sw')}$$ simply defined by the family of isomorphisms in $\Sigma(\Sw')$
\begin{equation}\label{defsigdel}
\Sigma(\delta)_a=\delta_a:p(a)\to p(a),\hspace{0.4cm}a\in M,
\end{equation}
which are natural thanks to condition $(\ref{cbms1})$. Moreover, so
defined, $\Sigma(\delta):\Sigma(\wp)\Rightarrow \Sigma(\bar{\wp})$
is monoidal, that is, conditions $(\ref{cfnt})$ hold, owing to
$(\ref{cbms2})$ and the equality $\delta_1=1\in A'_1$.

For any two vertically composable deformations
$\delta:\wp\Rightarrow\bar{\wp}$ and
$\bar{\delta}:\bar{\wp}\Rightarrow\bar{\bar{\wp}}$,  as in
$(\ref{ddforv})$, the equality $\Sigma(\bar{\delta}\circ
\delta)=\Sigma(\bar{\delta})\circ\Sigma(\delta)$ is easily verified
from $(\ref{fddforv})$ and $(\ref{pspps})$, as well as the equality
$\Sigma(1_\wp)=1_{\Sigma(\wp)}$, for any morphism $\wp:\Sw\to\Sw'$.
Hence, $(\ref{lsigma})$ is a functor.

Furthermore, for
$$\xymatrix @C=22pt {\Sw  \ar@/^0.8pc/[rr]^{\wp=(p,\mathbbm{q}\,,\varphi)}
 \ar@/_0.8pc/[rr]_{\bar{\wp}=(p,\bar{\mathbbm{q}},\bar{\varphi})}\ar@{}[rr]|{\Downarrow\,\delta} &
&\Sw' \ar@/^0.8pc/[rr]^{\wp'=(p',\mathbbm{q}',\varphi')}
 \ar@/_0.8pc/[rr]_{\bar{\wp}'=(p',\bar{\mathbbm{q}}',\bar{\varphi}')}\ar@{}[rr]|{\Downarrow\,\delta'} &
&\Sw''}$$
any two horizontally composable deformations as in $(\ref{chd})$, we
have the equality
$\Sigma(\delta'\delta)=\Sigma(\delta')\,\Sigma(\delta)$, since, for
any $a\in M$,
$$\begin{array}{lcl}
(\Sigma(\delta')\,\Sigma(\delta))_a&\overset{(\ref{psps})}=&\Sigma(\delta')_{\Sigma(\bar{\wp})(a)}\circ \Sigma(\wp')(\Sigma(\delta)_a)\\[5pt] &\overset{(\ref{defsig},\ref{defsigdel})}=& \delta'_{p(a)}\circ q'_{p(a)}(\delta_a) \overset{(\ref{cdeltas})}= (\delta'\delta)_a\overset{(\ref{defsigdel})}=\Sigma(\delta'\delta)_a.\end{array}
$$

The above confirms that $(\ref{tsigma})$,
$\Sigma:\mathbf{Z^3_{\mathrm{n\text{-}ab}}Mnd}\to \mathbf{MonGpd}$,
is actually a 2-functor.

\subsubsection{The functor $\Sigma$ in $(\ref{lsigma})$ is full and faithful} For any two
Schreier systems $\mathcal{S}= (M,\AAw,\Theta,\lambda)$ and
$\mathcal{S'}= (M',\AAw',\Theta',\lambda')$,  the functor $\Sigma:
\mathbf{Z^3_{\mathrm{n\text{-}ab}}Mnd}(\Sw,\Sw')\to
\mathbf{MonGpd}(\Sigma(\Sw),\Sigma(\Sw'))$ is plainly recognized to be
faithful, due to $(\ref{defsigdel})$. To prove that it is full, let
$\delta:\Sigma(\wp)\Rightarrow\Sigma(\bar{\wp})$ be any isomorphism
of monoidal functors, where
$\wp=(p,\mathbbm{q}\,,\varphi),\bar{\wp}=(\bar{p},\bar{\mathbbm{q}},\bar{\varphi}):\Sw\to\Sw'$
are morphisms of Schreier systems. Then, for any $a\in M$, the equality
be $p(a)=\bar{p}(a)$ must hold, since $\delta_a:p(a)\to\bar{p}(a)$ is an
isomorphism in the skeletal category $\Sigma(\Sw')$ and, moreover,
$\delta_a\in A'_{p(a)}$. Any element $f\in A_a$ defines a morphism
$f:a\to a$ in $\Sigma(\Sw)$, and the naturality of $\delta$ implies
the commutativity of the square in $\Sigma(\Sw')$
$$
\xymatrix@R=16pt{p(a)\ar[r]^{q_a(f)}\ar[d]_{\delta_a}&p(a)\ar[d]^{\delta_a}\\p(a)\ar[r]^{\bar{q}_a(f)}&p(a),}
$$
whence $\delta_a^{-1}\circ \bar{q}_a(f)\circ \delta_a=q_a(f)$. That
is, condition $(\ref{cbms1})$ in order for the family
$\delta={\big(\delta_a\in A'_{p(a)}\big)_{a\in M}}$ to be a deformation
of Schreier system morphisms from $\wp$ to $\bar{\wp}$, holds. Furthermore,
for any $a,b\in M$, the coherence condition $(\ref{cfnt})$ for
$\delta:\Sigma(\wp)\Rightarrow\Sigma(\bar{\wp})$ gives the
commutativity of
$$
\xymatrix@R=16pt{p(a)\,p(b)\ar[r]^{\varphi_{a,b}}\ar[d]_{p(a)_*(\delta_b)\circ p(b)^*(\delta_a)}&p(ab)
\ar[d]^{\delta_{ab}}\\p(a)\,p(b)\ar[r]^{\bar{\varphi}_{a,b}}&p(ab),}
$$
whence condition $(\ref{cbms2})$ follows. Therefore,
$\delta=\big(\delta_a)_{a\in M}:\wp\Rightarrow\bar{\wp}$ is actually
a deformation in $\mathbf{Z^3_{\mathrm{n\text{-}ab}}Mnd}$, and
clearly $\Sigma(\delta)=\delta$.

\subsubsection{The functor $\Sigma$ in $(\ref{lsigma})$ is essentially surjective} Suppose
$F=(F,\varphi):\Sigma(\Sw)\to\Sigma(\Sw')$ is any given monoidal
functor, where $\mathcal{S}= (M,\AAw,\Theta,\lambda)$ and
$\mathcal{S'}= (M',\AAw',\Theta',\lambda')$ are Schreier systems. By
Lemma \ref{uslem}, there is no loss of generality in assuming that
$F$ is strictly unitary, that is, $\varphi_0:1\to F(1)$ is the
identity isomorphism.

If we denote by $p:M\to M'$ the map given by the action of the
monoidal functor $F$ on objects, that is,  $p(a)=F(a)$ for any $a\in
M$, then the action of the functor $F$ on morphisms can be written,
for any $a\in M$ and $f\in A_a$, in the form
$$
F\big(a\overset{f}\to a\big)= \big(p(a)\overset{q_a(f)}\longrightarrow p(a)\big)
$$
for a map $q_a:A_a\to A'_{p(a)}$, which is indeed  a group
homomorphism since $F$ is a functor.
 Let  $\mathbbm{q}=\big(q_a:A_a\to A'_{p(a)}\big)_{a\in
M}$ denote the family of these group homomorphisms. Since the
category $\Sigma(\Sw')$ is skeletal and we have the structure
isomorphism $\varphi_{a,b}:p(a)\,p(b)\to p(ab)$ and $\varphi_0:1\to
p(1)$, it must be $p(a)p(b)=p(ab)$ and $p(1)=1$. Therefore, $p$ is a
homomorphism of monoids.

The triplet $\wp=(p,\mathbbm{q},\varphi)$ so obtained, where
$\varphi=\big(\varphi_{a,b}\in A'_{p(ab)}\big)_{a,b\in M}$,
 is actually a morphism of Schreier systems
$\wp:\Sw\to\Sw'$ and, by construction, ${\Sigma(\wp)=F}$. In effect, the
naturality of the isomorphisms $\varphi_{a,b}:p(a)\,p(b)\to p(ab)$
gives the commutativity of the squares $(\ref{tsqus})$, whence
condition $(\ref{cms1})$ holds. Moreover,  condition $(\ref{cms3})$
follows from the coherence condition $(\ref{ecf})$ which, in this
case, simply states that the diagrams $(\ref{conefc})$ are commutative.
The normalization condition $(\ref{cms4})$, $\varphi_{1,1}=1$, is a
consequence of the coherence squares $(\ref{ecf2})$, since $F$ is
assumed to be strictly unitary, that is, since $\varphi_0=1$.

\subsubsection{The monoidal equivalence $(\ref{psigma})$} We keep the notations used in  Subsection  \ref{ssamg} to define the Schreier
system $\Delta(\G)$. The mapping
$$
(a\overset{f}\to a)\ \mapsto \ (X_a\overset{f}\to X_a)
$$
is easily recognized as an equivalence of categories
$\xymatrix@C=17pt{J_\G:\Sigma(\Delta(\G))\ar[r]\ar@{}@<-2.5pt>[r]^(.7){\thicksim}&
\G}$, which, by Proposition \ref{saa}, defines a strictly unitary
monoidal equivalence when it is endowed  with the family of
isomorphisms $\varphi_{a,b}=\Gamma_{X_aX_b}:X_aX_b\to X_{ab}$,
$a,b\in M(\G)$. Note that their required naturality holds since, for
any $a,b\in M(\G)$, $g\in \mathrm{Aut}_\G(X_{a})$, and
$f\in\mathrm{Aut}_\G(X_{b})$, we have the commutative diagram
$$
\xymatrix@R=20pt{
X_aX_b\ar@{}[dr]|{(\ref{dab})}\ar@{}@<10pt>[rr]|{(B)}\ar@/^1.6pc/[rr]^{gf}
\ar[d]_{\Gamma}\ar[r]^{g1}&X_aX_b\ar@{}[dr]|{(\ref{dab})}\ar[d]^{\Gamma}\ar[r]^{1f}&X_aX_b\ar[d]^{\Gamma}\\
X_{ab}\ar@{}@<-10pt>[rr]|{=}\ar[r]_{b^*(g)}\ar@/_1.5pc/[rr]_{a_*(f)\circ b^*(g)}&X_{ab}\ar[r]_{a_*(f)}&X_{ab}}
$$
where the commutativity of the region labelled $(B)$ is a
consequence of  the fact that $\otimes:\G\times\G\to\G$ is a
functor. The needed coherence conditions (\ref{ecf}) and
(\ref{ecf2}) follow from the commutativity of diagrams
$(\ref{dlam})$ and the choice of the morphisms $\Gamma$'s made in
(\ref{sel}), respectively. \qed
\end{proof}

\subsection{The Schreier system construction biequivalence} The
above stated biequivalence between the $2$-category
 of monoidal groupoids and the $2$-category of Schreier systems
 $(\ref{tsigma})$,
$\xymatrix@C=16pt{\Sigma:\mathbf{Z^3_{\mathrm{n\text{-}ab}}Mnd}
\ar[r]\ar@{}@<-2.5pt>[r]^(.54){\approx}& \mathbf{MonGpd}}$, is
injective on objects, morphisms, and deformations. Moreover, for any
Schreier system $\Sw$, the equality $\Delta\Sigma(\Sw)=\Sw$ holds.
Hence, the assignment $\G\mapsto \Delta(\G)$, given by the Schreier
system construction $(\ref{assch})$, is the function on objects of a
biequivalence, quasi-inverse of $\Sigma$,
\begin{equation}\label{tdelta}\xymatrix@C=16pt{\Delta:\mathbf{MonGpd}
\ar[r]\ar@{}@<-2.5pt>[r]^(.5){\approx}& \mathbf{Z^3_{\mathrm{n\text{-}ab}}Mnd},}
\end{equation} which is
uniquely determined up to pseudo-natural equivalence by the equality
${\Delta\Sigma=1_{\mathbf{Z^3_{\mathrm{n\text{-}ab}}Mnd}}}$ and the
existence of a pseudo-natural equivalence
$$\xymatrix@C=11pt{J:\Sigma\Delta\ar@{=>}[r]
\ar@{}@<-1.8pt>[r]^(.39){\thicksim}&1_{\mathbf{MonGpd}},}$$ whose
component at any monoidal groupoid $\G$,
$\xymatrix@C=12pt{J_\G:\Sigma\Delta(\G)\ar[r]\ar@{}@<-2.2pt>[r]^(.65){\thicksim}&
\G}$, is the monoidal equivalence $(\ref{psigma})$.
 For completeness, we shall next show how the pseudo-functor
$\Delta$ and the pseudo-equivalence $J$
 work.

\subsubsection{$\Delta$ on monoidal functors} Suppose $F:\G\to\G'$ is any given
monoidal functor between monoidal groupoids $\G$ and $\G'$. Let
$F^{\mathrm u}:\G\to \G'$ be the  strictly unitary monoidal functor
associated to $F$ by the normalization functor described in
$(\ref{norm})$,  and let $(\Gamma_X:X\to X_a)_{a\in M(\G)}$ and
${(\Gamma'_{X'}:X'\to X'_{a'})_{a'\in M(\G')}}$ be the  cleavages
used for constructing the Schreier systems $\Delta(\G)$ and
$\Delta(\G')$, respectively. Then,
$$
\Delta(F)=(p(F),\mathbbm{q}(F)\,,\varphi(F)):\Delta(\G)\to\Delta(\G')
$$
is the morphism of Schreier systems where:
\begin{itemize}
\item[$\bullet$] $p=p(F):M(\G)\to M(\G')$ is the homomorphism of monoids defined by
$$p(a)=[FX_a]=[F^{\mathrm u}\!X_a],\hspace{0.4cm} a\in M(\G).$$
\item[$\bullet$]
$\mathbbm{q}=\mathbbm{q}(F)=\big(\mathrm{Aut}_\G(X_a)\overset{q_a}\longrightarrow
\mathrm{Aut}_{\G'}(X'_{p(a)})\big)_{a\in M(\G)}$ is the family
of group homomorphisms which carry an automorphism $f:X_a\to
X_a$, for any $a\in M(\G)$, to the unique automorphism
$q_a(f):X'_{p(a)}\to X'_{p(a)}$ in $\G'$ making the square below
commutative.
$$
\xymatrix@R=20pt{F^{\mathrm u}\!X_a\ar[r]^{F^{\mathrm u}f}\ar[d]_{\Gamma'}&F^{\mathrm u}\!X_a\ar[d]^{\Gamma'}\\
X'_{p(a)}\ar@{.>}[r]^{q_a(f)}&X'_{p(a)}
}
$$
\item[$\bullet$] $\varphi=\varphi(F)=\big(\varphi_{a,b}\in \mathrm{Aut}_{\G'}(X'_{p(ab)}\big)_{a,b\in
M(\G)}$ is the family of automorphisms in $\G'$ determined by
the commutativity of the diagrams
$$
\xymatrix{ F^{\mathrm u}\!X_a\,F^{\mathrm u}\!X_b\ar[r]^{\varphi^{\mathrm u}}\ar[d]_{\Gamma'\Gamma'}&F^{\mathrm u}(X_aX_b)\ar[r]^{F^{\mathrm u}\Gamma}&F^{\mathrm u}\!X_{ab}
\ar[d]^{\Gamma'}\\X'_{p(a)}X'_{p(b)}\ar[r]^{\Gamma'}&X'_{p(a)p(b)}\ar@{.>}[r]^{\varphi_{a,b}}&X'_{p(ab).}
}
$$
\end{itemize}
\subsubsection{$J$ on monoidal functors}
The component of the pseudo-natural equivalence $J:\Sigma
\Delta\overset{_\thicksim\ }\Rightarrow 1$ at any monoidal functor
$F:\G\to\G'$, is the isomorphism
$$
\xymatrix@R=18pt@C=40pt{\Sigma\Delta(\G)\ar[r]^{\Sigma\Delta(F)}\ar[d]_{J_\G}\ar@{}[dr]|{\overset{_\thicksim\
}\Rightarrow}&\Sigma\Delta(\G')\ar[d]^{J_{\G'}}\\ \G \ar[r]_{F}&\G'}
$$ defined by the isomorphisms of the cleavage in $\G'$, $\Gamma':F\!X_a\overset{_\thicksim\ }\to X'_{p(a)}$, $a\in M(\G)$.

\subsubsection{$\Delta$ on morphisms of monoidal functors} Let  $F,\bar{F}:\G\to\G'$ be monoidal functors as above, and suppose $\delta:F\Rightarrow \bar{F}$ is any morphism
between them. Then,
$$\xymatrix @C=18pt{\Delta(\G)
\ar@/^0.8pc/[rr]^{\Delta(F)}
 \ar@/_0.8pc/[rr]_{\Delta(\bar{F})}\ar@{}[rr]|{\Downarrow\,\Delta(\delta)} &
&\Delta(\G')}$$
is the deformation $\Delta(\delta)=\big(\Delta(\delta)_a\in
\mathrm{Aut}_{\G'}(X'_{p(a)})\big)_{a\in M(\G)}$, consisting of the
automorphisms in $\G'$ determined by the commutativity of the
diagrams below.
$$
\xymatrix@R=18pt{F^{\mathrm u}\!X_a\ar[r]^{\Gamma'}\ar[d]_{\delta^{\mathrm u}}&X'_{p(a)}\ar@{.>}[d]^{\Delta(\delta)_a}\\
\bar{F}^{\mathrm u}\!X_a\ar[r]^{\Gamma'}&X'_{p(a)}
}
$$

Since $\xymatrix@C=16pt{\Delta:\mathbf{MonGpd}
\ar[r]\ar@{}@<-2.5pt>[r]^(.5){\approx}&
\mathbf{Z^3_{\mathrm{n\text{-}ab}}Mnd}}$ is a biequivalence and, by
Lemma \ref{iseqin} every equivalence in
$\mathbf{Z^3_{\mathrm{n\text{-}ab}}Mnd}$ is actually an isomorphism,
we have the following theorem as a corollary:

\begin{theorem}\label{mtft}
$(i)$ For any Schreier system $(M,\mathbb{A},\Theta,\lambda)$, there
is a monoidal groupoid $\G$ with an isomorphism $\Delta(\G)\cong
(M,\mathbb{A},\Theta,\lambda)$.

$(ii)$ Two monoidal groupoids $\G$ and $\G'$ are equivalent if and
only if their associated Schreier systems $\Delta(\G)$ and
$\Delta(\G')$ are isomorphic.
\end{theorem}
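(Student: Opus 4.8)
The plan is to deduce Theorem~\ref{mtft} formally from the biequivalence $\Sigma$ of Theorem~\ref{mt1} together with Lemma~\ref{iseqin}; no new computation is needed, and the statement is in effect a corollary of the machinery already assembled.

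For part $(i)$, I would take $\G=\Sigma(M,\AAw,\Theta,\lambda)$, the monoidal groupoid produced by the Grothendieck construction $(\ref{gc})$. Invoking the equality $\Delta\Sigma=1_{\mathbf{Z^3_{\mathrm{n\text{-}ab}}Mnd}}$ recorded above, one obtains $\Delta(\G)=(M,\AAw,\Theta,\lambda)$ on the nose when $\Delta(\G)$ is computed from the tautological cleavage (i.e.\ $X_a=a$, with all the $\Gamma$'s identities), so the identity serves as the required isomorphism; for an arbitrary cleavage one still gets $\Delta(\G)\cong(M,\AAw,\Theta,\lambda)$, since different cleavages produce equivalent --- hence, by Lemma~\ref{iseqin}, isomorphic --- Schreier systems.

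For the ``only if'' direction of part $(ii)$, suppose $F:\G\to\G'$ is a monoidal equivalence, that is, an equivalence $1$-cell of the $2$-category $\mathbf{MonGpd}$. Since $\Delta:\mathbf{MonGpd}\to\mathbf{Z^3_{\mathrm{n\text{-}ab}}Mnd}$ is a pseudofunctor, it carries equivalences to equivalences, so $\Delta(F):\Delta(\G)\to\Delta(\G')$ is an equivalence in $\mathbf{Z^3_{\mathrm{n\text{-}ab}}Mnd}$, and Lemma~\ref{iseqin} then upgrades it to an isomorphism $\Delta(\G)\cong\Delta(\G')$. For the ``if'' direction, let $\wp:\Delta(\G)\overset{_\sim\,}\to\Delta(\G')$ be an isomorphism in $\mathbf{Z^3_{\mathrm{n\text{-}ab}}Mnd}$; applying the $2$-functor $\Sigma$ yields an isomorphism --- in particular a monoidal equivalence --- $\Sigma(\wp):\Sigma\Delta(\G)\to\Sigma\Delta(\G')$. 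Composing it with the monoidal equivalences $J_\G:\Sigma\Delta(\G)\overset{_\sim\,}\to\G$ and $J_{\G'}:\Sigma\Delta(\G')\overset{_\sim\,}\to\G'$ of $(\ref{psigma})$, using a quasi-inverse of $J_\G$ and the fact (Proposition~\ref{saa}) that a composite of monoidal equivalences is again a monoidal equivalence, one gets a monoidal equivalence $\G\to\G'$, whence $\G$ and $\G'$ are equivalent.

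As the argument is purely formal, there is no genuine obstacle; the two points that deserve a word of care are the use of Lemma~\ref{iseqin} (to pass from equivalences to strict isomorphisms in $\mathbf{Z^3_{\mathrm{n\text{-}ab}}Mnd}$) and the standard observation that a pseudofunctor --- here $\Delta$ --- preserves equivalence $1$-cells. If one preferred not to rely on the action of $\Delta$ on morphisms, the ``only if'' direction could be treated symmetrically to the ``if'' one: transport $F$ along $J$ to a monoidal equivalence $\Sigma\Delta(\G)\to\Sigma\Delta(\G')$, and use the essential surjectivity of the local equivalence $(\ref{lsigma})$ to realise it, up to isomorphism, as $\Sigma(\wp)$ for some equivalence $\wp$ of Schreier systems, which Lemma~\ref{iseqin} again renders an isomorphism.
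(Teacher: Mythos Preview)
Your proposal is correct and follows essentially the same approach as the paper: the paper presents Theorem~\ref{mtft} as an immediate corollary of the biequivalence $\Delta$ together with Lemma~\ref{iseqin}, without spelling out any details, and your argument simply unpacks exactly this---using $\Delta\Sigma=1$ for part~$(i)$, and for part~$(ii)$ using that $\Delta$ preserves equivalences (then Lemma~\ref{iseqin}) in one direction and transporting through $\Sigma$ and the equivalences $J_\G$, $J_{\G'}$ in the other. One tiny quibble: the fact that a composite of monoidal equivalences is a monoidal equivalence is not quite the content of Proposition~\ref{saa} (which is Saavedra's criterion), though it does follow from it; this is harmless.
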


\section{Classification of monoidal abelian groupoids.}
This section focuses on the special case  of {\em monoidal abelian
groupoids}, that is, monoidal groupoids
$\G=(\G,\otimes,\mathrm{I},\boldsymbol{a},\boldsymbol{l},\boldsymbol{r})$
whose isotropy groups $\Aut_\G(X)$, $X\in\mathrm{Ob}\G$, are all
abelian. First, we shall observe that some of the isotropy groups
of any monoidal groupoid are always abelian:
\begin{proposition}\label{proab}
$(i)$ If $\mathcal{S}= (M,\AAw,\Theta,\lambda)$ is any Schreier
system, then, for any invertible element $a\in M$, the group $A_{a}$
is abelian.

 $(ii)$ If
$\G=(\G,\otimes,\mathrm{I},\boldsymbol{a},\boldsymbol{l},\boldsymbol{r})$
is any monoidal groupoid, then, for any invertible object
$X\in \G$, the group $\Aut_\G(X)$ is abelian.
\end{proposition}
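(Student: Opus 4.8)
The plan is to base both parts on the same two elementary observations: that the ``unit'' isotropy group ($A_1$, respectively $\Aut_\G(\I)$) is abelian, and that for an invertible $a$ (respectively an invertible $X$) the corresponding isotropy group embeds into that unit group.

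For (i), I would first check that $A_1$ is abelian. Taking $a=b=1$ in the centralizing condition $(\ref{c3})$ gives $1_*(f)\circ 1^*(g)=1^*(g)\circ 1_*(f)$ for all $f,g\in A_1$, and $(\ref{c4})$ turns this into $f\circ g=g\circ f$. Now let $a\in M$ be invertible, with inverse $b$, so $ab=ba=1$. I would specialise $(\ref{c13})$ to the triple $(a,b,a)$: here $abc=aba=a$ and $bc=ba=1$, the pertinent maps of $\Theta$ are $b^{*}\colon A_{a}\to A_{ab}=A_{1}$ and $c^{*}=a^{*}\colon A_{1}\to A_{1a}=A_{a}$, one has $\lambda_{a,b,a}\in A_{aba}=A_{a}$, and $(bc)^{*}=1^{*}$ is the identity of $A_{a}$ by $(\ref{c4})$. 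Hence, for every $h\in A_{a}$,
$$\lambda_{a,b,a}\circ a^{*}\big(b^{*}(h)\big)\circ\lambda_{a,b,a}^{-1}=h,$$
so the composite $a^{*}\circ b^{*}\colon A_{a}\to A_{a}$ is conjugation by $\lambda_{a,b,a}^{-1}$, in particular a bijection. Therefore $b^{*}\colon A_{a}\to A_{1}$ is an injective group homomorphism, and since $A_{1}$ is abelian, so is $A_{a}$.

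For (ii), the quickest route is through the classifying biequivalence. The monoidal equivalence $J_{\G}\colon\Sigma(\Delta(\G))\to\G$ of $(\ref{psigma})$ induces an isomorphism on the monoids of isomorphism classes of objects, so an object $X$ is invertible exactly when its class $a=[X]$ is an invertible element of $M(\G)$; moreover $\Aut_\G(X)\cong\Aut_\G(X_{a})$, which is precisely the isotropy group of the Schreier system $\Delta(\G)=\big(M(\G),\AAw(\G),\Theta(\G),\lambda(\G)\big)$ at $a$. Part (i), applied to $\Delta(\G)$, then gives that $\Aut_\G(X)$ is abelian. (Alternatively one argues directly inside $\G$: functoriality of $\otimes$ together with the naturality of $\boldsymbol{l},\boldsymbol{r}$ and the equality $\boldsymbol{l}_{\I}=\boldsymbol{r}_{\I}$ give, by an Eckmann--Hilton argument, that $\Aut_\G(\I)$ is abelian; and if $X$ is invertible then $-\otimes X^{*}\colon\G\to\G$ is an equivalence of categories with quasi-inverse $-\otimes X$, so $\Aut_\G(X)\cong\Aut_\G(X\otimes X^{*})\cong\Aut_\G(\I)$.)

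The computations are routine; the one point that needs care — the main obstacle — is bookkeeping: keeping straight which two groups each of the maps $a_{*},b^{*}$ of $\Theta$ connects, and checking that with $b=a^{-1}$ one has $aba=a$ and $ba=1$, so that $\lambda_{a,b,a}$ really lies in $A_{a}$ and $(bc)^{*}$ degenerates to $1^{*}$. A symmetric derivation using $(\ref{c11})$ and the map $b_{*}\colon A_{a}\to A_{ba}=A_{1}$ works equally well.
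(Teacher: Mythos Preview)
Your proof is correct and follows essentially the same strategy as the paper: show $A_1$ (resp.\ $\Aut_\G(\I)$) is abelian via the centralizing condition, then exhibit for invertible $a$ an injective homomorphism $A_a\hookrightarrow A_1$. The only difference is cosmetic: the paper uses the lower-star maps and condition $(\ref{c11})$ to argue that $a_*\colon A_1\to A_a$ and $(a^{-1})_*\colon A_a\to A_1$ are mutually inverse isomorphisms, whereas you use the dual upper-star maps and $(\ref{c13})$ to get injectivity of $(a^{-1})^*\colon A_a\to A_1$ --- a variant you yourself flag in the final paragraph. Part (ii) is identical to the paper's reduction to (i) via $\Delta(\G)$.
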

\begin{proof} $(i)$ The group $A_1$ is abelian due to
 conditions $(\ref{c3})$ and
$(\ref{c4})$: for any $f,g\in A_1$,
$$f\circ g=1_*(f)\circ 1^*(g)= 1^*(g)\circ 1_*(f)= g\circ f.$$
 For any invertible element $a\in M$, the homomorphism $a_*:A_1\to
A_a$ is actually an isomorphism, with inverse $(a^{-1})_*:A_a\to A_1$,
since, by $(\ref{c11})$, $(\ref{c5})$, and $(\ref{c4})$, we have
$$a_*(a^{-1})_*=(aa^{-1})_*=1_*=1_{A_1}, \hspace{0.4cm} (a^{-1})_*a_*=(a^{-1}a)_*=1_*=1_{A_a}.$$
Hence, $A_a$ is abelian as is $A_1$.

$(ii)$ Let  $\Delta(\G)$ be the  Schreier system associated to the
monoidal groupoid $\G$, as in (\ref{assch}). If $X\in \G$ is any
invertible object, then $a=[X]\in M(\G)$ is an invertible element of
the associated monoid $(\ref{m})$, whence, due to part $(i)$, the
group $\mathrm{Aut}_\G(X_a)$ is abelian. Since the isomorphism
$\Gamma:X\to X_a$ induces a group isomorphism
$\mathrm{Aut}_\G(X)\cong \mathrm{Aut}_\G(X_a)$, the result
follows.\qed
\end{proof}

Therefore, for example, every categorical group is a monoidal abelian
groupoid. The classification of categorical groups was given by Sinh
in \cite{Si}, by means of Eilenberg-Mac Lane group cohomology groups
$\HH^3(G,(A,\theta))$, and our aim here is to give a similar solution
to the more general problem of classifying the monoidal abelian groupoids, now by means of monoid
cohomology groups $\HH^3(M,(\AAw,\Theta))$. To this end, we shall
briefly review below some basic aspects concerning the cohomology
theory of monoids that we are going to use, which is a
generalization of Eilenberg-Mac Lane's cohomology of groups due to
Leech \cite{leech}.
\subsection{Leech cohomology of monoids}
If $\mathcal{C}$ is a small category, then the category of (left)
$\mathcal{C}$-modules has objects the functors $D:\mathcal{C}\to
\mathbf{Ab}$ from $\mathcal{C}$ into abelian groups, with morphisms
the natural transformations. This is an abelian category with enough
injectives and projectives, and the abelian groups
$\HH^n(\mathcal{C},D)=\mathrm{Ext}_{\mathcal{C}}^n(\mathbb{Z},D)$,
where $\mathbb{Z}:\mathcal{C}\to \mathbf{Ab}$ is the constant
functor with value $\mathbb{Z}$, are the cohomology groups of the
category $\mathcal{C}$ with coefficients in the $\mathcal{C}$-module
$D$, studied by Roos \cite{Ro} and Watts \cite{W}, among other
authors. Cohomology theory of small categories is in itself a basis for
other cohomology theories, in particular for the Leech cohomology theory
of monoids, which is defined as follows:

A monoid $M$ gives rise to a category $\mathbb{D}\!M$ with object
set $M$ and arrow set $M\times M\times M$, with $(a,b,c):b\to abc$.
Composition is given by $$(d,abc,e)(a,b,c)=(da,b,ce),$$ and the
identity morphism of any object $a$ is $1_a=(1,a,1)$, where $1$ is
the unit element of $M$. This construction $M\mapsto \mathbb{D}M$
defines a functor $\mathbb{D}:\mathbf{Mnd}\to \mathbf{Cat}$, which
maps a monoid homomorphism  $p:M\to M'$ to the functor
$\mathbb{D}p:\mathbb{D}M\to \mathbb{D}M'$ given by $
\mathbb{D}p(a,b,c)=(p(a),p(b),p(c))$.

If we say that a $\DD M$-module, $\mathbb{D}M\to \mathbf{Ab}$,
carries the element $a\in M$ to the abelian group $A_a$ and carries
the morphism $(a,b,c)$ to the group homomorphism $a_*c^*:A_b\to
A_{abc}$, then we see that such a $\DD M$-module, hereafter denoted
by
$$
(\AAw,\Theta):\DD M\to \mathbf{Ab},
$$ is a
system of data consisting of two families of abelian groups and
homomorphisms, respectively,
$$\AAw=(A_{a})_{a\in M},\hspace{0.4cm} \Theta=(
A_b\overset{a_*}\longrightarrow A_{ab} \overset{b^*}\longleftarrow
A_a)_{a,b\,\in M}$$ such that, for any $a,b,c\in M$,
$$
(ab)_*=a_*b_* :A_c\to A_{abc},\
 c^*a_*=a_*c^*:A_b\to A_{abc},\
c^*b^*=(bc)^*:A_a\to A_{abc},  \
$$
and, for any $a\in M$, $$ 1_*=1_{A_a}=1^*:A_a\to A_a.$$

{\em Leech cohomology groups} $\HH^n(M,(\AAw,\Theta))$ \cite{leech},
of a monoid $M$ with coefficients in a $\DD M$-module
$(\AAw,\Theta)$, are defined to be those of its associated category
$\mathbb{D}M$, that is,
$$
\HH^n(M,(\AAw,\Theta))=\HH^n(\DD M,(\AAw,\Theta)).
$$

For  computing these cohomology groups, there is a cochain complex
\begin{equation}\label{compcal}C^\bullet(M,(\AAw,\Theta)),\end{equation} which is defined in degree $n>0$ by
$$
C^n(M,(\AAw,\Theta))=\left\{\lambda\in\prod\limits_{(a_1,\ldots,a_n)\in M^n}
\hspace{-0.7cm}A_{a_1\cdots a_n}\mid \lambda_{a_1,\ldots,a_{n}}=1 \text{ whenever some } a_i=1\right\}
$$
and $C^0(M,(\AAw,\Theta))=A_1$. The coboundary operator
$$\partial^n: C^n(M,(\AAw,\Theta))\to C^{n+1}(M,(\AAw,\Theta))$$ is
given, for $n=0$,  by $(\partial^0\lambda)_a=a_*(\lambda) \circ
a^*(\lambda)^{-1}$, while, for $n>0$,

$$
(\partial^n\lambda)_{a_1,\ldots,a_{n+1}}= (a_1)_*(\lambda_{a_2,\ldots,a_{n+1}})
\circ \prod\limits_{i=1}^n \lambda_{a_1,\ldots,a_ia_{i+1},\ldots,a_{n+1}}^{(-1)^{i}}
\circ a_{n+1}^*(\lambda_{a_1,\ldots,a_{n}}^{(-1)^{n+1}}).
$$

\noindent By \cite[Chapter II, 2.3, 2.9]{leech}, we have
$$ \HH^n(M,(\AAw,\Theta))=\HH^n\big(C^\bullet(M,(\AAw,\Theta))\big). $$

It is useful for our purposes to describe the natural
properties of the Leech cohomology on the category obtained by the
 Grothendieck construction on the  functor
 that associates to any monoid $M$
the category of $\DD M$-modules and, to any homomorphism $p:M\to
M'$, the functor $p^*$ that carries any $\DD M'$-module, say
$(\AAw',\Theta')$, to the $\DD M$-module
$$p^*(\AAw',\Theta')=(p^*\AAw',p^*\Theta'),$$where
$$p^*\AAw'=(A'_{p(a)})_{a\in M},\hspace{0.4cm}p^*\Theta'=\xymatrix{(
A'_{p(b)}\ar[r]^{p(a)_*}&A'_{\!p(ab)}
&\ar[l]_{p(b)^*}A'_{p(a)})_{a,b\,\in M}}.
$$

By applying the Grothendieck construction on the functor $M\mapsto
\DD M$-modules, we get a category, denoted by
$$
\mathcal{M}od_\DD,
$$
which may be heuristically viewed as the category obtained by tying
the categories of $\DD M$-modules together in a  natural fashion. It
 has objects pairs $(M,(\AAw,\Theta))$,  where $M$ is a monoid and $(\AAw,\Theta)$ is a
$\DD M$-module. Morphisms are pairs
$$(p,\mathbbm{q}):(M,(\AAw,\Theta))\to (M',(\AAw',\Theta'))$$ consisting
of a  monoid homomorphism $p:M\to M'$ together with a morphism of
$\DD M$-modules $\mathbbm{q}:(\AAw,\Theta)\to p^*(\AAw',\Theta')$,
that is, a family of group homomorphisms
$$\mathbbm{q}=\xymatrix@C=15pt{\big(A_a\ar[r]^-{q_a}&
A_{p(a)}^\prime\big)_{\!a\in M}},$$ satisfying, for any $a,b\in
M$,
\begin{equation}\label{yano}
q_{ab}\,a_*=p(a)_*\,q_b:A_b\to A'_{p(ab)},
\end{equation}
\begin{equation}\label{yano2}
q_{ab}\,b^*=p(b)^*\,q_a:A_a\to A'_{p(ab)}.
\end{equation}
Composition is defined by
$(p',\mathbbm{q}')(p,\mathbbm{q})=(p'p,\mathbbm{q}'\mathbbm{q})$,
where $\mathbbm{q}'\mathbbm{q}=(q'_{p(a)}q_a)_{a\in M}$.

Any morphism $(p,\mathbbm{q}):(M,(\AAw,\Theta))\to
(M',(\AAw',\Theta'))$ as above yields homomorphisms
$$
\HH^n(M,(\AAw,\Theta))\overset{\mathbbm{q}_*}\longrightarrow \HH^n(M, p^*(\AAw',\Theta'))
\overset{p^*}\longleftarrow
\HH^n(M',(\AAw',\Theta'))
$$
induced by the morphisms of cochain complexes
$$
C^\bullet(M,(\AAw,\Theta))\overset{\mathbbm{q}_*}\longrightarrow C^\bullet(M,p^*(\AAw',\Theta'))
\overset{p^*}\longleftarrow
C^\bullet(M',(\AAw',\Theta')),
$$
which are given on cochains by
$$(\mathbbm{q}_*\lambda)_{a_1,\ldots,a_n}= q_{a_1\cdots
a_n}(\lambda_{a_1,\ldots,a_n}), \hspace{0.3cm}
(p^*\lambda')_{a_1,\ldots,a_n}=\lambda'_{p(a_1),\ldots,p(a_n)}.$$

\subsubsection{Leech cohomology versus Gabriel-Zisman cohomology}
 Cohomology theory of small categories is also the foundation for the (co)homology theory of
simplicial sets with twisted coefficients, as  defined by Gabriel
and Zisman in \cite[Appendix II]{G-Z} (actually, the results by
Gabriel-Zisman are stated for homology of simplicial sets, but they
can be easily dualised to cohomology, see Illusie \cite[Chapter VI,
\S 3]{Ill}). Briefly, recall that the simplicial category, denoted
by $\mathbf{\Delta}$, has objects the ordered sets
$[n]=\{0,\ldots,n\}$, $n\geq 0$, and its arrows are weakly monotone
maps $\alpha:[m]\rightarrow[n]$. If
$X:\mathbf{\Delta}^{\!\mathrm{op}}\to \mathbf{Set}$ is a simplicial
set, then its {\em category of simplices}, denoted by
$\mathbf{\Delta}/X$, is the category obtained by the Grotendieck
construction on $X$. That is, the category whose objects are pairs
$(x,m)$ with $x\in X_m$; an arrow $\alpha:(x,m)\to (y,n)$ is a
 map $\alpha:[m]\to [n]$ in $\mathbf{\Delta}$ such that $\alpha^*y=x$,
where  we write $\alpha^*:X_n\to X_m$ for the map induced by
$\alpha$. A {\em coefficient system on $X$} is a
$\mathbf{\Delta}/X$-module, that is, a functor
$L:\mathbf{\Delta}/X\to \mathbf{Ab}$, and the {\em cohomology groups
of $X$ with coefficients in $L$} are, by definition,
$$\xymatrix{ \HH^n(X,L)=\HH^n(\mathbf{\Delta}/X,L),\hspace{3pt}n\geq 0.}$$

Suppose now $M$ is a monoid. The {\em classifying space} of $M$ is
the simplicial set $BM$, whose set of $n$-simplices is
$$
(BM)_n=\Big\{\mathbf{a}=(a_{i,j}\in M)_{0\leq i\leq j\leq n}
\mid a_{i,j}a_{j,k}=a_{i,k},\,a_{i,i}=1\Big\},
$$
which is usually identified with $M^n$ by the bijection
$\mathbf{a}\mapsto (a_{0,1},\dots,a_{n-1,n})$. The induced map
$\alpha^*:(BM)_n\to (BM)_m$, for a map $\alpha:[m]\to [n]$ in
$\mathbf{\Delta}$, is given by $\alpha^*(\mathbf{b})=\mathbf{a}$,
where $a_{i,j}=b_{\alpha(i),\alpha(j)}$.

There is a canonical functor $\mathbf{\Delta}/BM\to \DD M$, given by
$$
\xymatrix{(\mathbf{a},m)\ar[r]^{\alpha}&(\mathbf{b},n)}\ \mapsto
\xymatrix@C=85pt{a_{0,m}\ar[r]^{(b_{0,\alpha(0)},a_{0,m},b_{\alpha(m),n})}&b_{0,n}.}
$$
Therefore, by composition with it, each $\DD M$-module, say
$(\AAw,\Theta):\DD M\to \mathbf{Ab}$, gives a coefficient system on
$BM$, also denoted by $(\AAw,\Theta)$, and Gabriel-Zisman cohomology
groups of $BM$ with coefficients in the $\DD M$-module
$(\AAw,\Theta)$,
$$\HH^n(BM,(\AAw,\Theta)),$$
are defined. But note that, by \cite[Appendix II, Proposition
4.2]{G-Z}, these cohomology groups of $BM$ can be computed by the
same cochain complex $(\ref{compcal})$,
$C^\bullet(M,(\AAw,\Theta))$, used by Leech for computing the
cohomology groups of the monoid $M$ (see also \cite[Chapter VI,
(3.4.3)]{Ill}). Therefore, there is a natural identification
$$\HH^n(M,(\AAw,\Theta))=\HH^n(BM,(\AAw,\Theta)).$$

\subsection{The classification theorems} The
biequivalence in Theorem \ref{mt1} restricts to a biequivalence
between the full 2-subcategory of the 2-category of monoidal
groupoids
 given by the
monoidal abelian groupoids, denoted by
$$\mathbf{MonAbGpd},$$ and the full 2-subcategory of
the 2-category of Schreier systems given by those Schreier systems $
(M,\AAw,\Theta,\lambda)$ in which every group $A_a$, $a\in M$, is
abelian. Hereafter, this latter 2-category will be called  the {\em
$2$-category of Leech $3$-cocycles of monoids}, and be denoted by
$$\mathbf{Z^3}\mathbf{Mnd},$$ since its cells have the following
cohomological interpretation:

{\em 0-cells}. According to Definition \ref{defSch}, a Schreier
system $\mathcal{S}$ in $\mathbf{Z^3}\mathbf{Mnd}$ precisely is   a
triplet $\mathcal{S}= (M,(\AAw,\Theta),\lambda)$ consisting of a
monoid $M$, a $\DD M$-module $(\AAw,\Theta)$, and a 3-cocycle
$\lambda\in \Z^3(M,(\AAw,\Theta))$.

{\em 1-cells}. If $\mathcal{S}= (M,(\AAw,\Theta),\lambda)$ and
$\mathcal{S'}= (M',(\AAw',\Theta'),\lambda')$ are in
$\mathbf{Z^3}\mathbf{Mnd}$, then a morphism of Schreier systems (see
Subsection \ref{morscsys}),
$\wp=(p,\mathbbm{q}\,,\varphi):\Sw\to\Sw'$, is the same thing as a
morphism $(p,\mathbbm{q}):(M,(\AAw,\Theta))\to (M',(\AAw',\Theta'))$
in $\mathcal{M}od_\DD$,  together with a 2-cochain $\varphi\in
C^2(M,p^*(\AAw',\Theta'))$ such that
$\mathbbm{q}_*\lambda=p^*\lambda'\circ
\partial^2 \varphi$.

{\em 2-cells.} If $\wp=(p,\mathbbm{q},\varphi):\Sw\to\Sw'$ and
$\bar{\wp}=(\bar{p},\bar{\mathbbm{q}},\bar{\varphi}):\Sw\to\Sw'$ are
morphisms in $\mathbf{Z^3}\mathbf{Mnd}$, then (see Subsection
\ref{defmorSchr}) there is no deformation between them unless
$p=\bar{p}$ and $\mathbbm{q}=\bar{\mathbbm{q}}$. In such a case, a
deformation $\delta:\wp\Rightarrow\bar{\wp}$ consists of a 1-cochain
$\delta\in C^1(M,p^*(\AAw',\Theta'))$, such that
$\varphi=\bar{\varphi}\circ
\partial^1\delta$.

 Therefore, our first result here comes as a direct
consequence of Theorem \ref{mt1}:
\begin{theorem}\label{thebires} The
quasi-inverse biequivalences $(\ref{tsigma})$ and $(\ref{tdelta})$
restrict to corresponding quasi-inverse biequivalences
\begin{equation}\label{biequires}
\xymatrix{\mathbf{MonAbGpd}\ar@{}[r]|-{\thickapprox}
\ar@<4pt>[r]^-{\Delta}&\ar@<3pt>[l]^-{\Sigma} \mathbf{Z^3}\mathbf{Mnd}.
}
\end{equation}
\end{theorem}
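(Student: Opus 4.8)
The plan is to check that the biequivalences $\Sigma$ and $\Delta$ of Theorem~\ref{mt1} carry each of the two full $2$-subcategories into the other, and then to invoke the elementary fact that a biequivalence restricts to a biequivalence between any pair of full $2$-subcategories that it interchanges. So essentially nothing new has to be proved; the work is to match the two subcategories.

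First I would verify that $\Sigma$ sends $\mathbf{Z^3}\mathbf{Mnd}$ into $\mathbf{MonAbGpd}$. By the construction of $\Sigma(\Sw)$ in $(\ref{gc})$, for each object $a$ one has $\mathrm{Aut}_{\Sigma(\Sw)}(a)=A_a$ as a group, so if every $A_a$ is abelian then $\Sigma(\Sw)$ is a monoidal abelian groupoid. Conversely $\Delta$ sends $\mathbf{MonAbGpd}$ into $\mathbf{Z^3}\mathbf{Mnd}$: the groups occurring in the family $\AAw(\G)$ of $(\ref{a})$ are exactly the isotropy groups $\mathrm{Aut}_\G(X_a)$ of $\G$, so if $\G$ is a monoidal abelian groupoid then every member of $\AAw(\G)$ is abelian and $\Delta(\G)$ is a Schreier system all of whose groups are abelian, i.e.\ an object of $\mathbf{Z^3}\mathbf{Mnd}$. (Equivalently, in the cohomological reading of the cells of $\mathbf{Z^3}\mathbf{Mnd}$ recorded just before the statement, $(\AAw(\G),\Theta(\G))$ is then a $\DD M(\G)$-module, the conjugation conditions $(\ref{c11})$--$(\ref{c13})$ collapsing to $(ab)_*=a_*b_*$, $c^*a_*=a_*c^*$, $(bc)^*=c^*b^*$, and $\lambda(\G)\in\Z^3\bigl(M(\G),(\AAw(\G),\Theta(\G))\bigr)$.)

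Next, since $\mathbf{MonAbGpd}\subseteq\mathbf{MonGpd}$ and $\mathbf{Z^3}\mathbf{Mnd}\subseteq\mathbf{Z^3_{\mathrm{n\text{-}ab}}Mnd}$ are full $2$-subcategories, the hom-category equivalence $(\ref{lsigma})$ of Theorem~\ref{mt1}, applied to Schreier systems $\Sw,\Sw'$ all of whose groups are abelian, is already an equivalence $\mathbf{Z^3}\mathbf{Mnd}(\Sw,\Sw')\overset{\thicksim}{\to}\mathbf{MonAbGpd}(\Sigma(\Sw),\Sigma(\Sw'))$, and the $2$-functoriality of $\Sigma$ and of $\Delta$ is inherited verbatim by restriction. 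For essential surjectivity on $0$-cells: given a monoidal abelian groupoid $\G$, Theorem~\ref{mt1} supplies a monoidal equivalence $J_\G:\Sigma(\Delta(\G))\overset{\thicksim}{\to}\G$, and by the previous paragraph $\Delta(\G)$ lies in $\mathbf{Z^3}\mathbf{Mnd}$; hence every object of $\mathbf{MonAbGpd}$ is equivalent to one of the form $\Sigma(\Sw)$ with $\Sw$ in $\mathbf{Z^3}\mathbf{Mnd}$. This exhibits the restriction of $\Sigma$ as a biequivalence, and the equality $\Delta\Sigma=1$ together with the pseudo-natural equivalence $J:\Sigma\Delta\overset{\thicksim}{\Rightarrow}1$ from Theorem~\ref{mt1} restrict to show that the restriction of $\Delta$ is a quasi-inverse, yielding $(\ref{biequires})$.

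There is no real obstacle beyond this bookkeeping; the only point worth making explicit is that $\mathbf{MonAbGpd}$ is a legitimate target for the restriction, i.e.\ that it is closed under monoidal equivalence — which holds because a monoidal equivalence, being in particular an equivalence of underlying categories, induces group isomorphisms on isotropy groups, so a monoidal groupoid equivalent to an abelian one is itself abelian. (This closure can also be read off from Proposition~\ref{proab} together with Theorem~\ref{mt1}.) With that remark in place the two displayed $2$-subcategories are precisely the images of one another under $\Sigma$ and $\Delta$, and the theorem follows.
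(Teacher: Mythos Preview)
Your argument is correct and is exactly the verification the paper has in mind: the paper gives no explicit proof, declaring the theorem ``a direct consequence of Theorem~\ref{mt1}'' after having spelled out the cohomological description of the cells of $\mathbf{Z^3}\mathbf{Mnd}$. Your check that $\Sigma$ and $\Delta$ interchange the two full $2$-subcategories, so that the biequivalence data of Theorem~\ref{mt1} restrict, is precisely the bookkeeping underlying that one-line justification.
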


Closely related to the category $\mathbf{Z^3}\mathbf{Mnd}$ is the
{\em category of Leech $3$-cohomology classes of monoids}, denoted
by
\begin{equation}\label{h3mon}\mathbf{H^3}\mathbf{Mnd},\end{equation}
which plays a fundamental role in stating our classification theorem
below. Its objects are triplets $(M,(\AAw,\Theta), c)$, where $M$ is
a monoid, $(\AAw,\Theta)$ is a $\DD M$-module, and $c\in
\HH^3(M,(\AAw,\Theta))$ is a 3-cohomology class of $M$ with coefficients
in $(\AAw,\Theta)$. An arrow $$(p,\mathbbm{q}):(M,(\AAw,\Theta),c)\to
(M',(\AAw',\Theta'),c')$$ is a morphism
$(p,\mathbbm{q}):(M,(\AAw,\Theta))\to (M',(\AAw',\Theta'))$ in
$\mathcal{M}od_\DD$, such that
$$p^*(c')=\mathbbm{q}_*(c)\in \HH^3(M,p^*(\AAw',\Theta')).$$

Observe that a morphism $(p,\mathbbm{q})$ is an isomorphism in
$\mathbf{H^3}\mathbf{Mnd}$ if and only if $p:M\to M'$  is an
isomorphism of monoids and $\mathbbm{q}:(\AAw,\Theta)\to
p^*(\AAw',\Theta') $ is an isomorphism of $\DD M$-modules.

We have the {\em cohomology class functor}
\begin{eqnarray*}\label{cohoclass}&\mathrm{cl}: \mathbf{Z^3}\mathbf{Mnd}\to
\mathbf{H^3}\mathbf{Mnd},\\[4pt] \nonumber
&(M,(\AAw,\Theta),\lambda)\mapsto (M,(\AAw,\Theta),[\lambda])\\ \nonumber
&(p,\mathbbm{q}\,,\varphi)\mapsto (p,\mathbbm{q})\end{eqnarray*}
 where
$[\lambda]\in \HH^3(M,(\AAw,\Theta))$ denotes the cohomology class of
$\lambda \in \Z^3(M,(\AAw,\Theta))$.
 This functor clearly carries two isomorphic morphisms of $\mathbf{Z^3}\mathbf{Mnd}$ to
 the same morphism in $\mathbf{H^3}\mathbf{Mnd}$, whence composition with the pseudo-functor
 $\Delta$ above gives a functor
\begin{equation}\label{eqclas}
 \mathrm{Cl}=\mathrm{cl}\,\Delta:\mathbf{MonAbGpd}\to \mathbf{H^3}\mathbf{Mnd},
\end{equation}
that we call {\em the classifying functor} because of the theorem
below.
\begin{theorem}[Classification of monoidal abelian groupoids]\label{mt3}
$(i)$ For any monoid $M$, any $\DD M$-module $(\AAw,\Theta)$, and any
cohomology class $c\in \HH^3(M,(\AAw,\Theta))$, there is a monoidal
abelian groupoid $\G$ with an isomorphism $\mathrm{Cl}(\G)\cong
(M,(\AAw,\Theta),c)$.

$(ii)$ A monoidal functor between monoidal abelian groupoids
$F:\G\to\G'$ is an equivalence if and only if $\mathrm{Cl}(F):
\mathrm{Cl}(\G)\to \mathrm{Cl}(\G')$ is an isomorphism.

$(iii)$ For any isomorphism
$(p,\mathbbm{q}):\mathrm{Cl}(\G)\cong\mathrm{Cl}(\G')$,
 there is a monoidal equivalence
 $F:\G\overset{_\sim\ }\to\G'$
 such that $\mathrm{Cl}(F)=(p,\mathbbm{q})$.

 $(iv)$ If $\G$ and $\G'$ are monoidal abelian groupoids with $\mathrm{Cl}(\G)=(M,(\AAw,\Theta),c)$ and
$\mathrm{Cl}(\G')=(M',(\AAw',\Theta'),c')$, then,  for any morphism
 $(p,\mathbbm{q}):\mathrm{Cl}(\G)\to \mathrm{Cl}(\G')$
 in  $\mathbf{H^3}\mathbf{Mnd}$, there is a
(non-natural) bijection
$$
\big\{[F]:\G\to\G' \mid \mathrm{Cl}(F)=(p,\mathbbm{q})\big\}\cong \HH^2(M,p^*(\AAw',\Theta'))
$$
between the set of isomorphism classes of those monoidal functors
$F:\G\to \G'$ that are carried by the classifying functor to
$(p,\mathbbm{q})$ and the elements of the second cohomology group of
$M$ with coefficients in the $\DD M$-module $p^*(\AAw',\Theta')$.
\end{theorem}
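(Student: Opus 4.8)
\emph{Setup and reductions.} The plan is to pull everything back along the quasi-inverse biequivalences $\Sigma$ and $\Delta$ of Theorem \ref{thebires}, using the cohomological dictionary for the cells of $\mathbf{Z^3}\mathbf{Mnd}$ recorded just before the statement together with Lemma \ref{iseqin}. Write $\Delta(\G)=(M,(\AAw,\Theta),\lambda)$ and $\Delta(\G')=(M',(\AAw',\Theta'),\lambda')$, so $\mathrm{Cl}(\G)=(M,(\AAw,\Theta),[\lambda])$ and $\mathrm{Cl}(\G')=(M',(\AAw',\Theta'),[\lambda'])$; recall that $\Delta\Sigma=1$ and that $J:\Sigma\Delta\Rightarrow 1$ is a pseudo-natural equivalence with components the monoidal equivalences $J_\G$ of $(\ref{psigma})$. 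For $(i)$, pick a normalized $3$-cocycle $\lambda\in \Z^3(M,(\AAw,\Theta))$ with $[\lambda]=c$ and put $\G=\Sigma(M,(\AAw,\Theta),\lambda)$; its isotropy groups are the abelian groups $A_a$, so $\G$ is a monoidal abelian groupoid, and $\mathrm{Cl}(\G)=\mathrm{cl}\,\Delta\Sigma(M,(\AAw,\Theta),\lambda)=\mathrm{cl}(M,(\AAw,\Theta),\lambda)=(M,(\AAw,\Theta),c)$, even on the nose.

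\emph{Parts $(ii)$ and $(iii)$.} Being a biequivalence, $\Delta$ preserves and reflects equivalences, so $F:\G\to\G'$ is a monoidal equivalence iff $\Delta(F)=(p,\mathbbm{q},\varphi)$ is an equivalence in $\mathbf{Z^3}\mathbf{Mnd}$, which by Lemma \ref{iseqin} holds iff $p$ and all $q_a$ are isomorphisms; by the criterion for isomorphisms in $\mathbf{H^3}\mathbf{Mnd}$ noted above, this is exactly the condition that $\mathrm{Cl}(F)=\mathrm{cl}\,\Delta(F)=(p,\mathbbm{q})$ be an isomorphism. This is $(ii)$. For $(iii)$, given an isomorphism $(p,\mathbbm{q}):\mathrm{Cl}(\G)\cong\mathrm{Cl}(\G')$, the equality $\mathbbm{q}_*[\lambda]=p^*[\lambda']$ says that the $3$-cocycles $\mathbbm{q}_*\lambda$ and $p^*\lambda'$ of $C^\bullet(M,p^*(\AAw',\Theta'))$ are cohomologous, so there is $\varphi\in C^2(M,p^*(\AAw',\Theta'))$ with $\mathbbm{q}_*\lambda=p^*\lambda'\circ\partial^2\varphi$; thus $\wp=(p,\mathbbm{q},\varphi):\Delta(\G)\to\Delta(\G')$ is a $1$-cell of $\mathbf{Z^3}\mathbf{Mnd}$, and it is an isomorphism by Lemma \ref{iseqin} since $p$ and all $q_a$ are isomorphisms. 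Then $F:=J_{\G'}\circ\Sigma(\wp)\circ J_\G^{-1}:\G\to\G'$ (with $J_\G^{-1}$ a quasi-inverse of $J_\G$) is a monoidal equivalence, and, computing $\Delta$ on $J_\G$ and $J_{\G'}$ with the trivial cleavage of the skeletal groupoids $\Sigma\Delta(\G)$, $\Sigma\Delta(\G')$, one checks that $\Delta(J_\G)$ and $\Delta(J_{\G'})$ are carried by $\mathrm{cl}$ to identities; combined with $\Delta\Sigma(\wp)=\wp$ and the pseudo-functoriality of $\Delta$ this gives $\mathrm{Cl}(F)=\mathrm{cl}\,\Delta(F)=\mathrm{cl}(\wp)=(p,\mathbbm{q})$.

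\emph{Part $(iv)$.} By Theorem \ref{mt1} the functor $\Sigma$ is an equivalence on hom-groupoids, and conjugating by the monoidal equivalences $J_\G,J_{\G'}$ yields an equivalence of groupoids $\Phi:\mathbf{Z^3}\mathbf{Mnd}(\Delta(\G),\Delta(\G'))\to\mathbf{MonGpd}(\G,\G')$, $\wp\mapsto J_{\G'}\circ\Sigma(\wp)\circ J_\G^{-1}$, which as in $(iii)$ satisfies $\mathrm{Cl}(\Phi(\wp))=\mathrm{cl}(\wp)$. Passing to isomorphism classes, $\Phi$ restricts to a bijection between $\{[F]:\G\to\G'\mid \mathrm{Cl}(F)=(p,\mathbbm{q})\}$ and the set of isomorphism classes of $1$-cells $(p,\mathbbm{q},\varphi):\Delta(\G)\to\Delta(\G')$ of $\mathbf{Z^3}\mathbf{Mnd}$ with the prescribed underlying pair $(p,\mathbbm{q})$; it remains to count these. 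Such $1$-cells exist (fix one, $\wp_0=(p,\mathbbm{q},\varphi_0)$, as in $(iii)$), and the admissibility condition $\mathbbm{q}_*\lambda=p^*\lambda'\circ\partial^2\varphi$ forces $\partial^2\varphi=\partial^2\varphi_0$, so $\varphi\mapsto\varphi\circ\varphi_0^{-1}$ is a bijection from the admissible $2$-cochains $\varphi$ onto $\Z^2(M,p^*(\AAw',\Theta'))$. By the description of deformations recalled above, $(p,\mathbbm{q},\varphi)$ and $(p,\mathbbm{q},\bar\varphi)$ are isomorphic $1$-cells iff $\varphi=\bar\varphi\circ\partial^1\delta$ for some $\delta\in C^1(M,p^*(\AAw',\Theta'))$, i.e. iff $\varphi\circ\bar\varphi^{-1}\in\mathrm{B}^2(M,p^*(\AAw',\Theta'))$, i.e. iff $\varphi\circ\varphi_0^{-1}$ and $\bar\varphi\circ\varphi_0^{-1}$ determine the same class in $\HH^2(M,p^*(\AAw',\Theta'))$. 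Hence $[(p,\mathbbm{q},\varphi)]\mapsto[\varphi\circ\varphi_0^{-1}]$ is the required bijection onto $\HH^2(M,p^*(\AAw',\Theta'))$, non-natural because it depends on the choice of $\wp_0$.

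\emph{Main obstacle.} The delicate point, used repeatedly, is the compatibility assertion that the equivalence $\Phi$ (equivalently the biequivalence $\Delta$) genuinely matches the $\mathrm{Cl}$-fibre over $(p,\mathbbm{q})$ on the monoidal side with the ``fix $(p,\mathbbm{q})$'' fibre on the cocycle side, i.e. that $\mathrm{Cl}(\Phi(\wp))=\mathrm{cl}(\wp)$. Establishing this requires a careful but routine bookkeeping with $\Delta\Sigma=1$, the pseudo-functoriality of $\Delta$, and the explicit shape of $\Delta(J_\G)$ under the trivial cleavage, for which one verifies that $\mathrm{cl}$ sends $\Delta(J_\G)$ to an identity of $\mathbf{H^3}\mathbf{Mnd}$. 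Once this compatibility is in place, parts $(i)$--$(iii)$ are immediate and $(iv)$ reduces to the purely homological coset computation sketched above.
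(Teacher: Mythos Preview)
Your proof is correct and follows essentially the same strategy as the paper: reduce everything to the biequivalence with $\mathbf{Z^3}\mathbf{Mnd}$, invoke Lemma \ref{iseqin} for the equivalence criterion, and do the coset computation in $\HH^2$ for part $(iv)$.

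There is, however, one unnecessary detour. For parts $(iii)$ and $(iv)$ you transport along $\Sigma$ and then conjugate by the $J_\G$'s, building $\Phi(\wp)=J_{\G'}\circ\Sigma(\wp)\circ J_\G^{-1}$; this forces you to verify the compatibility $\mathrm{Cl}(\Phi(\wp))=\mathrm{cl}(\wp)$, which you correctly flag as the main obstacle. The paper avoids this entirely by using $\Delta$ rather than $\Sigma$ on hom-groupoids: since $\mathrm{Cl}=\mathrm{cl}\,\Delta$ by definition, the equivalence $\Delta:\mathbf{MonAbGpd}(\G,\G')\simeq\mathbf{Z^3}\mathbf{Mnd}(\Delta(\G),\Delta(\G'))$ automatically matches the fibre $\{[F]:\mathrm{Cl}(F)=(p,\mathbbm{q})\}$ with $\{[p,\mathbbm{q},\varphi]\}$, no bookkeeping needed. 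The paper also streamlines $(iii)$ by deducing it from $(iv)$ and $(ii)$: once $(iv)$ produces some $F$ with $\mathrm{Cl}(F)=(p,\mathbbm{q})$, part $(ii)$ makes it an equivalence. Your direct construction in $(iii)$ works, but the paper's order of argument is shorter.
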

\begin{proof}
 $(i)$
Given any object $(M,(\AAw,\Theta),c)\in \mathbf{H^3}\mathbf{Mnd}$,
let us choose any 3-cocycle $\lambda \in \Z^3(M,(\AAw,\Theta))$ such
that $[\lambda]=c$. Then, letting $\G=\Sigma(M,\AAw,\Theta,\lambda)$,
we have
$$
\mathrm{Cl}(\G)=\mathrm{cl}(\Delta\Sigma(M,\AAw,\Theta,\lambda))=
\mathrm{cl}(M,(\AAw,\Theta),\lambda)=(M,(\AAw,\Theta),c).
$$

$(ii)$ Since the pseudo-functor $\Delta: \mathbf{MonAbGpd}\to
\mathbf{Z^3}\mathbf{Mnd}$ is a biequivalence, it suffices to prove
that a morphism in $\mathbf{Z^3}\mathbf{Mnd}$, say
$$(p,\mathbbm{q},\varphi):(M,(\AAw,\Theta),\lambda)\to
(M',(\AAw',\Theta'),\lambda'),$$ is an equivalence if and only if the
induced  $$(p,\mathbbm{q}):(M,(\AAw,\Theta),[\lambda])\to
(M',(\AAw',\Theta'),[\lambda'])$$ is an isomorphism in
$\mathbf{H^3}\mathbf{Mnd}$, that is, if and only if $p:M\to M'$ is
an isomorphism of monoids and $\mathbbm{q}:(\AAw,\Theta)\to
p^*(\AAw',\Theta')$ is an isomorphism of $\DD M$-modules. Hence, the
result follows from Lemma \ref{iseqin}.

$(iv)$ Suppose $\Delta(\G)= (M,\AAw,\Theta,\lambda)$ and
$\Delta(\G')= (M',\AAw',\Theta',\lambda')$, and let
$(p,\mathbbm{q}): \mathrm{Cl}(\G)\to \mathrm{Cl}(\G')$ be any given
morphism in $\mathbf{H^3}\mathbf{Mnd}$. The equivalence between the
hom-groupoids
$$\mathbf{MonAbGpd}(\G,\G')\overset{\Delta}\simeq
\mathbf{Z^3}\mathbf{Mnd}(\Delta(\G),\Delta(\G')),$$ induces a bijection, $[F]\mapsto [\Delta(F)]$,
$$\big\{[F]:\G\to\G' \mid \mathrm{Cl}(F)=(p,\mathbbm{q})\big\}\cong
\big\{[p,\mathbbm{q},\varphi]:(M,(\AAw,\Theta),\lambda)\to
(M',(\AAw',\Theta'),\lambda') \big\}
$$
between the set of iso-classes $[F]$ of those monoidal functors
$F:\G\to\G'$ with $\mathrm{Cl}(F)=(p,\mathbbm{q})$, and the  set of
iso-classes $[p,\mathbbm{q},\varphi]$ of morphisms of the form
$$(p,\mathbbm{q},\varphi):(M,(\AAw,\Theta),\lambda)\to
(M',(\AAw',\Theta'),\lambda')$$ in the 2-category of Leech
3-cocycles. Since $p^*[\lambda']=\mathbbm{q}_*[\lambda]$, both
3-cocycles $p^*\lambda'$ and $\mathbbm{q}_*\lambda$  represent the
same class in the cohomology group $\HH^3(M,p^*(\AAw',\Theta'))$.
Therefore,  a 2-cochain $\varphi\in
C^2(M,p^*(\AAw',\Theta'))$ such that $q_*\lambda=p^*\lambda'\circ
\partial^2\varphi$  must exist. Hence,
$(p,\mathbbm{q},\varphi):(M,(\AAw,\Theta),\lambda)\to
(M',(\AAw',\Theta'),\lambda')$ is  a morphism in
$\mathbf{Z^3}\mathbf{Mnd}$. Furthermore, observe that any other
morphism  of the form $(p,\mathbbm{q},\psi):(M,(\AAw,\Theta),\lambda)\to
(M',(\AAw',\Theta'),\lambda')$ in $\mathbf{Z^3}\mathbf{Mnd}$ realizing the same morphism
$(p,\mathbbm{q})$ of $\mathbf{H^3}\mathbf{Mnd}$ is necessarily
written in the form $(p,\mathbbm{q},\varphi\circ \phi)$ for
some $\phi\in \Z^2(M,p^*(\AAw',\Theta'))$ and, moreover,  the
morphisms $(p,\mathbbm{q},\varphi)$ and
$(p,\mathbbm{q},\varphi\circ \phi)$ are isomorphic if and only
if $\phi=\partial^1\delta$ for some $\delta\in
C^1(M,p^*(\AAw',\Theta'))$. That is, there is a bijection
$$
\HH^2(M,p^*(\AAw',\Theta'))\cong \big\{[p,\mathbbm{q},\psi]:(M,(\AAw,\Theta),\lambda)\to
(M',(\AAw',\Theta'),\lambda') \big\}
$$
given by $[\phi]\mapsto [p,\mathbbm{q},\varphi\circ \phi]$.

$(iii)$ Let $(p,\mathbbm{q}): \mathrm{Cl}(\G)\cong \mathrm{Cl}(\G')$
be any given isomorphism in $\mathbf{H^3}\mathbf{Mnd}$. By the
already proved part $(iv)$, there exists a monoidal functor
$F:\G\to\G'$ such that $\mathrm{Cl}(F)=(p,\mathbbm{q})$, which, by
part $(ii)$ is an equivalence.\qed
\end{proof}

The functor $ \mathbf{MonAbGpd}\to\mathcal{M}od_\DD$, $\G\mapsto
(M(\G), (\AAw(\G),\Theta(\G))$, obtained by composing the classifying
functor $(\ref{eqclas})$
 with the
forgetful functor ${\mathbf{H^3}\mathbf{Mnd}\to\mathcal{M}od_\DD}$,
$(M,(\AAw,\Theta),c)\mapsto (M,(\AAw,\Theta))$, turns the 2-category
of monoidal abelian groupoids into a fibred 2-category over the
category $\mathcal{M}od_\DD$. It follows from the above results
that, for any fixed monoid $M$ and  $\DD M$-module $(\AAw,\Theta)$,
the mappings $$[\lambda]\mapsto
[\Sigma(M,\AAw,\Theta,\lambda)],\hspace{0.3cm} \G\mapsto
[\lambda(\G)],$$ describe mutually inverse bijections between the
set $\HH^3(M,(\AAw,\Theta))$ and the set of equivalence classes of
monoidal groupoids in the fibre 2-category
 over $(M,(\AAw,\Theta))$. However, this latter set  is conceptually a little too rigid
since the strict requirements $M(\G)=M$ and
$(\AAw(\G),\Theta(\G))=(\AAw,\Theta)$, for a monoidal abelian groupoid
$\G$, are not very natural. We shall show how to relax them below.

\begin{definition} \label{addef} For any given monoid $M$ and any $\DD M$-module $(\AAw,\Theta)$, we say that a monoidal abelian groupoid $\G$ is {\em of type} $(M,(\AAw,\Theta))$ if there are given
\begin{itemize}
\item[$\bullet$] a monoid isomorphism $i:M\cong M(\G)$,
\item[$\bullet$] a family of group isomorphisms
$\mathbbm{j}=\big(j_X:A_{a}\cong\mathrm{Aut}_{\G}(X)\big)_{\!a\in
M, \,X\in i(a)}$,
\end{itemize}
such that,

$\bullet$ if $X,Y\in i(a)$ then,  for any morphism $h:X\to Y$ in
$\G$ and any $g\in A_{a}$,
$$j_Y(g)=h\circ j_X(g)\circ h^{-1}.
$$

$\bullet$ if $X\in i(a)$ and $Y\in i(b)$, then, for any  $f\in A_b$
and  $g\in A_a$,
$$
j_{XY}(a_*(f))=1_X\,j_Y(f),\hspace{0.3cm} j_{XY}(b^*(g))=j_X(g)\,1_Y.
$$

If $(p,\mathbbm{q}):(M,(\AAw,\Theta))\to (M',(\AAw',\Theta'))$ is any
morphism in the category $\mathcal{M}od_\DD$,   and  $\G$ and  $\G'$
are monoidal abelian groupoids of the respective types
$(M,(\AAw,\Theta))$ and  $(M',(\AAw',\Theta'))$, then a monoidal
functor $F:\G\to\G'$ is said to be {\em of type} $(p,\mathbbm{q})$
whenever

 $\bullet$  if $X\in i(a)$, then $FX\in i'(p(a))$,
and, for any $g\in A_a$, $j'_{FX}q_{a}(g)=F(j_X(g))$.

 \vspace{0.2cm}Two monoidal abelian groupoids of the same
  type $(M,(\AAw,\Theta))$, say $(\G,i,\mathbbm{j})$ and $(\G',i',\mathbbm{j}')$,  are defined to be
  {\em equivalent} if there exists a monoidal equivalence $F:\G\to\G'$
of type $(1,\mathbbm{1})$, that is, whenever

$\bullet$  if $X\in i(a)$, then $FX\in i'(a)$, and, for any $g\in
A_a$, $ j'_{FX}(g)=F(j_X(g)) $.
\end{definition}

If we denote by $$\mathbf{MonAbGpd}(M,(\AAw,\Theta))$$ the set of
equivalence classes $[\G,i,\mathbbm{j}]$ of those monoidal abelian
groupoids $(\G,i,\mathbbm{j})$ of type $(M,(\AAw,\Theta))$, then we
are ready to summarize our results on the classification of monoidal
abelian groupoids and their homomorphisms in slightly more classic
terms:

\begin{theorem}\label{concl}
$(i)$ For any monoidal abelian groupoid $\G$, there exists a monoid
$M$ and a $\DD M$-module $(\AAw,\Theta)$ such that $\G$ is of type
$(M,(\AAw,\Theta))$.

$(ii)$ For any monoid $M$ and any $\DD M$-module $(\AAw,\Theta)$,
there is a natural bijection
\begin{equation}\label{eqbij}\begin{array}{c}\mathbf{MonAbGpd}(M,(\AAw,\Theta))\cong
\HH^3(M,(\AAw,\Theta))
\end{array}\end{equation} given by   $$[\G,i,\mathbbm{j}]\ \mapsto\
c(\G)=\mathbbm{j}_*^{-1}i^*([\lambda(\G)]),$$ where $\lambda(\G)$ is
the $3$-cocycle obtained as in $(\ref{abc})$, and
$$
\HH^3(M(\G),(\AAw(\G),\Theta(\G)))\overset{i^*}\longrightarrow \HH^n(M, i^*(\AAw(\G),\Theta(\G)))
\overset{\mathbbm{j}_*^{-1}}\longrightarrow
\HH^3(M,(\AAw,\Theta))
$$
the induced isomorphisms on cohomology groups by the isomorphism
$$(i,\mathbbm{j}):(M,(\AAw,\Theta))\cong (M(\G),
(\AAw(\G),\Theta(\G))$$
in the category $\mathcal{M}od_\DD$.  In the other direction, the
bijection is induced by the mapping that carries a $3$-cocycle
$\lambda\in \Z^3(M,(\AAw,\Theta))$ to the monoidal abelian groupoid
$\Sigma(M,\AAw,\Theta,\lambda)$, given by the construction
$(\ref{gc})$.

$(iii)$ If $\G$ is of type $(M,(\AAw,\Theta))$ and $\G'$ is of type
$(M',(\AAw',\Theta'))$, then for every monoidal functor, $F:\G\to
\G'$, there exists a morphism in the category $\mathcal{M}od_\DD$,
${(p,\mathbbm{q}):(M,(\AAw,\Theta))\to (M',(\AAw',\Theta'))}$, such
that $F$
 is of type $(p,\mathbbm{q})$.

$(iv)$ If $\G$ is of type $(M,(\AAw,\Theta))$ and $\G'$ is of type
$(M',(\AAw',\Theta'))$, then, for any morphism
$(p,\mathbbm{q}):(M,(\AAw,\Theta))\to (M',(\AAw',\Theta'))$  in the
category $\mathcal{M}od_\DD$, there is a monoidal functor
$F:\G\to \G'$ of type $(p,\mathbbm{q})$ if and only if
$$p^*(c(\G'))=\mathbbm{q}_*(c(\G))\in \HH^3(M,p^*(\AAw',\Theta')).$$

In such a case, isomorphism classes of  monoidal functors
$F:\G\to\G'$ of type $(p,\mathbbm{q})$ are in bijection with the
elements of the group
$$
\HH^2(M,p^*(\AAw',\Theta')).
$$
\end{theorem}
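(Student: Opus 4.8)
The plan is to deduce all four assertions from Theorem \ref{mt3} after translating the ``of type'' terminology of Definition \ref{addef} into the language of $\mathcal{M}od_\DD$ and $\mathbf{H^3}\mathbf{Mnd}$. Two dictionaries do the work. First, for a monoidal abelian groupoid $\G$, a structure of type $(M,(\AAw,\Theta))$, given by $(i,\mathbbm{j})$, amounts---after restricting $\mathbbm{j}$ to the objects $X_a$ of a chosen cleavage---to an isomorphism $(i,\mathbbm{j}):(M,(\AAw,\Theta))\cong(M(\G),(\AAw(\G),\Theta(\G)))$ in $\mathcal{M}od_\DD$, the compatibilities $(\ref{yano})$, $(\ref{yano2})$ being exactly the two tensor bullets of Definition \ref{addef}. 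Second, if $\G$, $\G'$ carry structures $(i,\mathbbm{j})$, $(i',\mathbbm{j}')$ of types $(M,(\AAw,\Theta))$, $(M',(\AAw',\Theta'))$, then a monoidal functor $F:\G\to\G'$ is of type $(p,\mathbbm{q})$ precisely when $\mathrm{Cl}(F)=(i',\mathbbm{j}')\circ(p,\mathbbm{q})\circ(i,\mathbbm{j})^{-1}$; in particular each $F$ is of exactly one type, namely $(p,\mathbbm{q})=(i',\mathbbm{j}')^{-1}\circ\mathrm{Cl}(F)\circ(i,\mathbbm{j})$. This second dictionary is obtained by unwinding the description of $\Delta(F)=(p(F),\mathbbm{q}(F),\varphi(F))$ through the cleavages and the normalization $F^{\mathrm u}$; the one slightly delicate point is the comparison of $F$ with $F^{\mathrm u}$ at the unit, dispatched using that $\mathrm{Aut}_\G(\I)$ is abelian, so that the conjugations defining $\mathbbm{j}'$ at an object isomorphic to $\I'$ do not depend on the chosen isomorphism.

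For $(i)$, pick a cleavage of $\G$ and take $M=M(\G)$, $(\AAw,\Theta)=(\AAw(\G),\Theta(\G))$, $i=1_{M(\G)}$, and $j_X:\mathrm{Aut}_\G(X_a)\to\mathrm{Aut}_\G(X)$ the conjugation $g\mapsto\Gamma_X^{-1}\circ g\circ\Gamma_X$ along the cleavage morphisms. The conjugation bullet of Definition \ref{addef} holds because, $\mathrm{Aut}_\G(X)$ being abelian, $h\circ j_X(g)\circ h^{-1}$ does not depend on $h:X\to Y$ and equals $j_Y(g)$ for $h=\Gamma_Y^{-1}\Gamma_X$; the tensor bullets follow from the defining squares $(\ref{dab})$ of $a_*$ and $b^*$ together with the functoriality of $\otimes$, after noting that $j_{XY}$ may be computed through the isomorphism $\Gamma_{X_aX_b}\circ(\Gamma_X\otimes\Gamma_Y):XY\to X_{ab}$ (again using that $\mathrm{Aut}_\G(X_{ab})$ is abelian).

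For $(ii)$, set $c(\G)=\mathbbm{j}_*^{-1}i^*[\lambda(\G)]\in\HH^3(M,(\AAw,\Theta))$, the transport of $[\lambda(\G)]$ along the first dictionary, so that by construction $(i,\mathbbm{j}):(M,(\AAw,\Theta),c(\G))\cong\mathrm{Cl}(\G)$ in $\mathbf{H^3}\mathbf{Mnd}$. One checks that $c(\G)$ is independent of the cleavage (two cleavages yield isomorphic Schreier systems, as remarked after the proposition of Subsection \ref{ssamg}, and the induced isomorphism intertwines the two restrictions of $\mathbbm{j}$) and invariant under equivalences of type $(1,\mathbbm{1})$ (such an $F$ has $\mathrm{Cl}(F)=(i',\mathbbm{j}')\circ(i,\mathbbm{j})^{-1}$ by the second dictionary, forcing $c(\G)=c(\G')$); hence $[\G,i,\mathbbm{j}]\mapsto c(\G)$ is well defined. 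It is surjective: $\G=\Sigma(M,\AAw,\Theta,\lambda)$ with $[\lambda]=c$ is a monoidal abelian groupoid since each $A_a$ is abelian, it is of type $(M,(\AAw,\Theta))$ with $(i,\mathbbm{j})=(1,\mathbbm{1})$, and $\Delta\Sigma(M,\AAw,\Theta,\lambda)=(M,\AAw,\Theta,\lambda)$ gives $c(\G)=[\lambda]=c$. It is injective: if $c(\G)=c(\G')$ then $(i',\mathbbm{j}')\circ(i,\mathbbm{j})^{-1}:\mathrm{Cl}(\G)\cong\mathrm{Cl}(\G')$ in $\mathbf{H^3}\mathbf{Mnd}$, so Theorem \ref{mt3}$(iii)$ yields a monoidal equivalence $F:\G\to\G'$ with $\mathrm{Cl}(F)=(i',\mathbbm{j}')\circ(i,\mathbbm{j})^{-1}$, i.e.\ of type $(1,\mathbbm{1})$. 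The naturality of $(\ref{eqbij})$ is immediate from the definition of $c(\G)$ as a transported class.

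Statement $(iii)$ is the ``each $F$ is of exactly one type'' half of the second dictionary. For $(iv)$, a monoidal functor $F$ of type $(p,\mathbbm{q})$ is, by that dictionary, the same as one with $\mathrm{Cl}(F)=(i',\mathbbm{j}')\circ(p,\mathbbm{q})\circ(i,\mathbbm{j})^{-1}$; since $(i,\mathbbm{j})$ and $(i',\mathbbm{j}')$ identify $\mathrm{Cl}(\G)$ with $(M,(\AAw,\Theta),c(\G))$ and $\mathrm{Cl}(\G')$ with $(M',(\AAw',\Theta'),c(\G'))$, such an $F$ exists if and only if $(i',\mathbbm{j}')\circ(p,\mathbbm{q})\circ(i,\mathbbm{j})^{-1}$ is a morphism of $\mathbf{H^3}\mathbf{Mnd}$, i.e.\ if and only if $p^*(c(\G'))=\mathbbm{q}_*(c(\G))$ in $\HH^3(M,p^*(\AAw',\Theta'))$; and in that case Theorem \ref{mt3}$(iv)$ gives the bijection between the set of isomorphism classes of such $F$ and $\HH^2(M,p^*(\AAw',\Theta'))$. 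The only genuine work, hence the main obstacle, is the verification of the two dictionaries---matching each condition of Definition \ref{addef} with the defining conditions of the cells of $\mathbf{Z^3}\mathbf{Mnd}$ and with the functor $\mathrm{Cl}$---and the cleavage-independence of $c(\G)$; this is bookkeeping rather than a conceptual difficulty.
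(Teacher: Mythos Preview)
Your proposal is correct and follows essentially the same route as the paper: both reduce everything to Theorem~\ref{mt3} via the same two observations, namely that a type structure $(i,\mathbbm{j})$ is the same as an isomorphism $(i,\mathbbm{j}):(M,(\AAw,\Theta))\cong(M(\G),(\AAw(\G),\Theta(\G)))$ in $\mathcal{M}od_\DD$, and that $F$ is of type $(p,\mathbbm{q})$ if and only if the square $(p(F),\mathbbm{q}(F))\circ(i,\mathbbm{j})=(i',\mathbbm{j}')\circ(p,\mathbbm{q})$ commutes in $\mathcal{M}od_\DD$. The paper records these two observations and declares the rest a direct consequence; you supply more of the bookkeeping (cleavage-independence, the explicit surjectivity/injectivity argument for $(ii)$, the abelianness point at the unit), but the architecture is the same.
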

\begin{proof} All the statements here are a direct consequence of those in Theorem \ref{mt3} after two quite obvious observations, namely:
  (1) A monoidal abelian groupoid $\G$ is of type $(M,(\AAw,\Theta))$
  if and only if there is given an isomorphism
$$(i,\mathbbm{j}):(M,(\AAw,\Theta))\cong (M(\G), (\AAw(\G),\Theta(\G))$$
in the category $\mathcal{M}od_\DD$. (2) If
$(p,\mathbbm{q}):(M,(\AAw,\Theta))\to (M',(\AAw',\Theta'))$ is a
morphism in the category $\mathcal{M}od_\DD$, and  $\G$ and $\G'$ are
any monoidal groupoids
   of respective types  $(M,(\AAw,\Theta))$ and $(M',(\AAw',\Theta'))$, then a monoidal functor $F:\G\to\G'$ is of type $(p,\mathbbm{q})$ if and only if the square below in the category $\mathcal{M}od_\DD$ commutes.
$$
\xymatrix{(M,(\AAw,\Theta))\ar[r]^(.4){(i,\,\mathbbm{j})}\ar[d]_{(p,\mathbbm{q})}& (M(\G), (\AAw(\G),\Theta(\G))\ar[d]^{(p(F),\mathbbm{q}(F))}\\
(M',(\AAw',\Theta'))\ar[r]^(.4){(i'\!,\,\mathbbm{j}')}&(M(\G'), (\AAw(\G'),\Theta(\G')).
}
$$ \qed
\end{proof}

\begin{remark} The category of monoids is tripleable over  the category of sets. In
\cite[Theorem 8]{wells}, Wells identified the category $
\mathbf{Ab}(\mathbf{Mnd}\!\downarrow_{\! M})$ of  abelian group
objects in the comma category of monoids over a monoid $M$ with  the
category of $\DD M$-modules (see Example \ref{exagp}), and he proved
that with a dimension shift both the Barr-Beck cotriple cohomology
theory \cite{Barr-Beck,Beck} and the Leech cohomology theory of
monoids are the same. Hence, for any monoid $M$ and any $\DD
M$-module $(\AAw,\Theta)$, the  Duskin \cite{Duskin} and Glenn
\cite{Glenn} general interpretation theorem for cotriple cohomology
classes shows that
 equivalence classes of 2-{\em torsors} over $M$ under
$(\AAw,\Theta)$ are in bijection with elements of the cohomology
group $\HH^{3}(M,(\AAw,\Theta))$.

A very similar result  follows from the general result by Pirashvili
\cite{Pir1,Pir2} and Baues-Dreckmann \cite{B-D} about the
classification of  track categories. From this result, the elements
of $\HH^{3}(M,(\AAw,\Theta))$ are in bijection with equivalence
classes of {\em linear track extensions} of (the category) $M$ by
the $\DD M$-module ({\em natural system on $M$} in their
terminology) $(\AAw,\Theta)$.

Indeed, the three terms `2-torsor over $M$ under $(\AAw,\Theta)$',
`linear track extension of $M$ by $(\AAw,\Theta)$', and `{\em strict}
monoidal abelian groupoid of type $(M,(\AAw,\Theta))$' are plainly
recognized to be synonymous: simply take into account that an
internal groupoid in the category of monoids is the same thing as a
strict monoidal groupoid, together with Lemmas 2.2 and 2.3 in
\cite{C-A} (or \cite[Theorem 3.3]{C-B-G}).

However, we must stress that while it is relatively harmless to
consider monoidal abelian groupoids as `strict', since by the Mac
Lane Coherence Theorem for monoidal categories \cite{Mac,Jo-St}
every monoidal abelian groupoid is equivalent to a strict one, we
consider it is not so harmless when dealing with their homomorphisms
since {\em not every monoidal functor is isomorphic to a strict
one}. Indeed, it is possible to find two strict monoidal abelian
groupoids, say $\G$ and $\G'$, that are related by a monoidal
equivalence between them but there is no strict equivalence either
from $\G$ to $\G'$ nor from $\G'$ to $\G$. For this reason, if to
establish the bijection $(\ref{eqbij})$, we want to use only strict
monoidal abelian groupoids and strict equivalences between them, as
we need to do for applying Duskin or Pirashvili classification
results, then we must define two strict monoidal abelian groupoids
$\G$ and $\G'$ as {\em equivalent} if there is a zig-zag chain of
strict equivalences such as $\G\leftarrow \G_1\rightarrow \cdots
\leftarrow\G_n\rightarrow \G'$. Although two strict monoidal abelian
groupoids in the same equivalence class can always be linked by one
intervening pair of strict equivalences, this phenomenon,  we think,
obscures unnecessarily the conclusions. Moreover, the facts stated
in Theorem \ref{concl}$(iv)$ clearly fail for strict monoidal
functors.
\end{remark}

\subsection{Classification of categorical groups revisited }
As we recalled above, a categorical group is a monoidal groupoid
$\G$ in which every object  is invertible or, equivalently, such
that its associated monoid of connected components $M(\G)$ is a
group. By Proposition \ref{proab}, every categorical group is
abelian, so that
$$\mathbf{CatGp}\subseteq \mathbf{MonAbGpd}$$ is the full 2-subcategory
of the 2-category of monoidal abelian groupoids given by the
categorical groups. We shall denote by
$$ \mathbf{Z^3Mnd}|_{\mathbf{Gp}}\subseteq \mathbf{Z^3}\mathbf{Mnd}$$ the full 2-subcategory of the
2-category of Leech $3$-cocycles of monoids whose objects are those
$\mathcal{S}= (G,(\AAw,\Theta),\lambda)$ in $\mathbf{Z^3Mnd}$ where
$G$ is a group. Then,  the biequivalences $(\ref{biequires})$ in
Theorem \ref{thebires} restrict to corresponding quasi-inverse
biequivalences
\begin{equation}\label{bieres}
\xymatrix{\mathbf{CatGp}\ar@{}[r]|-{\thickapprox}
\ar@<4pt>[r]^-{\Delta}&\ar@<3pt>[l]^-{\Sigma} \mathbf{Z^3}\mathbf{Mnd}|_{\mathbf{Gp}}.
}
\end{equation}

But now we shall note that this latter 2-category
$\mathbf{Z^3Mon}|_{\mathbf{Gp}}$  is essentially the same as its
full 2-subcategory, called the {\em $2$-category of  Eilenberg-Mac
Lane $3$-cocycles of groups} \cite{ceg02II} and denoted by
$$
\mathbf{Z^3Gp}\subseteq\mathbf{Z^3Mon}|_{\mathbf{Gp}},
$$
which is defined  by those $\mathcal{S}= (G,(\AAw,\Theta),\lambda)$
as above, but in which the family of groups $\AAw$ is constant, that
is, where $A_a=A_1$ for all $a\in G$, and in the family $\Theta$ all
automorphisms $a^*:A_1\to A_1$, $a\in G$, are identities. Observe
that such a $\Sw$ is then described simply as a triple
$\Sw=(G,(A,\theta),\lambda)$, where $G$ is a group, $A$ ($=A_1$) is
an abelian group, $\theta:G\to\mathrm{Aut}(A)$ is a group
homomorphism ($\theta(a)=a_*$), and $\lambda\in\Z^3(G,(A,\theta))$
is an ordinary normalized 3-cocycle of the group $G$ with
coefficients in the $G$-module $(A,\theta)$, that is, the $G$-module
defined by the abelian group $A$ with left action  $(a,f)\mapsto
{}^af=\theta(a)(f)$.

A morphism
 $(p,q,\varphi):(G,(A,\theta),\lambda)\to (G',(A',\theta'),\lambda')$
in $\mathbf{Z^3Gp}$ then consists of a group homomorphism $p:G\to
G'$, a homomorphism of $G$-modules $$q:(A,\theta)\to
p^*(A',\theta')=(A',\theta'p),$$ and a normalized 2-cochain
$\varphi\in C^2(G,(A',\theta'p))$ such that
$q_*(\lambda)=p^*(\lambda')\circ
\partial^2\varphi$.  If $$(p,q,\varphi), (\bar{p},\bar{q},\bar{\varphi}) :(G,(A,\theta),\lambda)\to
(G',(A',\theta'),\lambda')$$  are two morphisms in $\mathbf{Z^3Gp}$,
then there is no deformation between them unless $p=\bar{p}$ and
$q=\bar{q}$, and, in such a case, a deformation
$\delta:(p,{q},\varphi)\Rightarrow (p,q,\bar{\varphi})$ consists of
a 1-cochain $\delta \in C^1(G,(A',\theta'p))$, such that
$\varphi=\bar{\varphi}\circ
\partial^1\delta$.

 We have a 2-functor
$$(~)_1:\mathbf{Z^3Mon}|_{\mathbf{Gp}}\to \mathbf{Z^3Gp}$$ that is
given on objects by
\begin{equation}\label{tricol}
(G,(\AAw,\Theta),\lambda)\mapsto (G,(A_1,\theta),\widehat{\lambda}),
\end{equation}
where the homomorphism $\theta:G\to\mathrm{Aut}(A_1)$ is defined, by
means of the isomorphisms $A_1\overset{a_*\ }\to A_a \overset{\
a^*}\leftarrow A_1$, $a\in G$, of $\Theta$, by the equations
$$
a^*\theta(a)=a_*,
$$
while the component at any $(a,b,c)\in G\times G\times G$ of the
3-cocycle $\widehat{\lambda}\in \Z^3(G,A_1)$ is defined, by means of
the isomorphism $(abc)^*:A_1\to A_{abc}$, by
$$
(abc)^*(\widehat{\lambda}_{a,b,c})= \lambda_{a,b,c}.
$$

A morphism $(p,\mathbbm{q},\varphi):(G,(\AAw,\Theta),\lambda)\to
(G',(\AAw',\Theta'),\lambda')$ in $\mathbf{Z^3Mon}|_{\mathbf{Gp}}$ is
mapped by the 2-functor $(~)_1$ to the morphism
\begin{equation}\label{morphis}
(p,q_1,\widehat{\varphi}):(G,(A_1,\theta),\widehat{\lambda})\to
(G',(A'_1,\theta'),\widehat{\lambda}'),
\end{equation}
where $\widehat{\varphi}\in C^2(G,(A'_1,\theta'p))$ is the 2-cochain
whose component at any pair $a,b\in G$ is determined by the
isomorphism $p(ab)^*:A'_1\to A'_{p(ab)}$ such that
$$
p(ab)^*(\widehat{\varphi}_{a,b})=\varphi_{a,b},
$$
whereas  a deformation $\delta:(p,\mathbbm{q},\varphi)\Rightarrow
(p,\mathbbm{q},\psi)$ in $\mathbf{Z^3Mon}|_{\mathbf{Gp}}$ is carried
to the deformation in $\mathbf{Z^3Gp}$
$$
\widehat{\delta}:(p,q_1,\widehat{\varphi})\Rightarrow (p,q_1,\widehat{\psi}),
$$
where $\widehat{\delta}\in C^1(G,(A'_1,\theta'p))$ is the 1-cochain
defined by means of the isomorphisms $p(a)^*:A'_1\to A'_{p(a)}$,
$a\in G$, such that
$$
p(a)^*(\widehat{\delta}_a)=\delta_a.
$$

All the needed verifications to prove that $(~)_1$  is actually a
2-functor are quite straightforward. For example, we see that
$\widehat{\lambda}$ in $(\ref{tricol})$ is certainly a 3-cocycle and that
the homomorphism  $q_1:(A_1,\theta)\to (A'_1,p^*\theta')$ in
$(\ref{morphis})$ is of $G$-modules, as follows:
\begin{equation}\nonumber \begin{split}
(abcd)^*\,({}^{a}\!\widehat{\lambda}_{b,c,d}\circ
\widehat{\lambda}_{a,bc,d}\circ &
           \widehat{\lambda}_{a,b,c})\\
&=(bcd)^*a^*({}^{a}\!\widehat{\lambda}_{b,c,d})\circ
(abcd)^*(\widehat{\lambda}_{a,bc,d})\circ
d^*(abc)^*(\widehat{\lambda}_{a,b,c})\\
&=(bcd)^*a_*(\widehat{\lambda}_{b,c,d})\circ \lambda_{a,bc,d}\circ d^*(\lambda_{a,b,c})\\
&= a_*( {\lambda}_{b,c,d})\circ \lambda_{a,bc,d}\circ d^*(\lambda_{a,b,c})\\
&=\lambda_{a,b,cd}\circ \lambda_{ab,c,d}\\
&=(abcd)^*(\widehat{\lambda}_{a,b,cd}\circ \widehat{\lambda}_{ab,c,d}),
\end{split}
\end{equation}
whence ${}^{a}\widehat{\lambda}_{b,c,d}\circ
\widehat{\lambda}_{a,bc,d}\circ
           \widehat{\lambda}_{a,b,c}= \widehat{\lambda}_{a,b,cd}\circ
           \widehat{\lambda}_{ab,c,d}.
$
$$p(a)^*({}^{p(a)}q_1(f))=p(a)_*(q_1(f))\overset{(\ref{yano})}=q_aa_*(f)=q_a(a^*({}^af))
\overset{(\ref{yano2})}=p(a)^*(q_1({}^af)),$$ whence
$q_1({}^af)={}^{p(a)}q_1(f)$.

\begin{proposition} The $2$-functors inclusion  and
$(~)_1$ are mutually quasi-inverse biequivalences
$$
\xymatrix{\mathbf{Z^3Gp}\ar@<-4pt>[r]^(.43){\thickapprox}_-{{in}} &
\mathbf{Z^3}\mathbf{Mnd}|_{\mathbf{Gp}}.\ar@<-5pt>[l]_-{(~)_1}}
$$
\end{proposition}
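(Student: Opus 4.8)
The plan is to show that the composite $(~)_1 \circ \mathit{in}$ equals the identity $2$-functor on $\mathbf{Z^3Gp}$, and to exhibit a pseudo-natural equivalence $\mathit{in} \circ (~)_1 \overset{_\sim\ }\Rightarrow 1_{\mathbf{Z^3Mnd}|_{\mathbf{Gp}}}$; by the standard characterization of biequivalences (cf. \cite[p. 570]{street}) this suffices. That $(~)_1 \circ \mathit{in} = 1$ is immediate from the definitions: if $\Sw=(G,(A,\theta),\lambda)$ is an object of $\mathbf{Z^3Gp}$, then in $\mathit{in}(\Sw)$ every structure isomorphism $a^*:A_1\to A_a$ is the identity of $A=A_1$, so the defining equations $a^*\theta(a)=a_*$, $(abc)^*(\widehat\lambda_{a,b,c})=\lambda_{a,b,c}$, $p(ab)^*(\widehat\varphi_{a,b})=\varphi_{a,b}$, $p(a)^*(\widehat\delta_a)=\delta_a$ of the $2$-functor $(~)_1$ return exactly the original data on all cells. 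Hence $(~)_1$ is strictly left inverse to the inclusion.

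For the other composite, fix an object $\Sw=(G,(\AAw,\Theta),\lambda)$ of $\mathbf{Z^3Mnd}|_{\mathbf{Gp}}$; write $\Sw_1 = (~)_1(\Sw)=(G,(A_1,\theta),\widehat\lambda)$ for its image, viewed again in $\mathbf{Z^3Mnd}|_{\mathbf{Gp}}$ via the inclusion. The component of the desired pseudo-natural equivalence at $\Sw$ should be a morphism $\epsilon_\Sw=(1_G,\mathbbm{q}^\epsilon,\varphi^\epsilon):\Sw_1 \to \Sw$ in $\mathbf{Z^3Mnd}|_{\mathbf{Gp}}$, where $q^\epsilon_a=a^*:A_1\to A_a$ is the structure isomorphism of $\Theta$ (recall $a^*$ is an isomorphism because $G$ is a group, by the argument in the proof of Proposition \ref{proab}(i)), and $\varphi^\epsilon=(1)_{a,b\in G}$ is trivial. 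One checks $\epsilon_\Sw$ is a morphism of Schreier systems: condition $(\ref{cms1})$ reduces, after cancelling the trivial $\varphi^\epsilon$, to the two identities $a^*\theta(a)=a_*$ (the definition of $\theta$) and $b^*b^*|_{A_1}=(bc)^*$... more precisely to $a^*\, p(b)^*\big|_{\text{mod }\theta} = \ldots$; these follow from the functoriality relations $c^*a_*=a_*c^*$, $c^*b^*=(bc)^*$ listed in the definition of a $\DD M$-module together with the defining equation of $\widehat\lambda$ and of $\theta$. Condition $(\ref{cms3})$ becomes, with $\varphi^\epsilon=\varphi'{}^\epsilon=1$ and $\lambda'=\widehat\lambda$, precisely the identity $(abc)^*(\widehat\lambda_{a,b,c})=\lambda_{a,b,c}$, i.e.\ the definition of $\widehat\lambda$; and normalization $(\ref{cms4})$ is automatic. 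Since each $q^\epsilon_a=a^*$ is an isomorphism and $1_G$ is an isomorphism, Lemma \ref{iseqin} shows $\epsilon_\Sw$ is an isomorphism in $\mathbf{Z^3Mnd}|_{\mathbf{Gp}}$.

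It remains to verify pseudo-naturality: for a morphism $\wp=(p,\mathbbm{q},\varphi):\Sw\to\bar\Sw$ in $\mathbf{Z^3Mnd}|_{\mathbf{Gp}}$, one must produce a deformation filling the square relating $\epsilon_{\bar\Sw}\circ \mathit{in}((~)_1(\wp))$ and $\wp\circ \epsilon_\Sw$, and check these deformations compose correctly. Chasing the definitions, both composites have underlying monoid map $p$ and, on isotropy groups, carry $f\in A_1$ to $p(a)^*(q_1(f))\in \bar A_{p(a)}$ (using $q_1$ defined through $p(a)^*(q_1(f))=q_a(a^*(f))$ back in the definition of $(~)_1$), so they already agree on $1$- and on the group-part of the data; the remaining discrepancy is between the respective $\varphi$-families, and one sets the deformation to be $\delta=(1)_{a\in G}$ — condition $(\ref{cbms2})$ then reduces to the equality $p(ab)^*(\widehat\varphi_{a,b})=\varphi_{a,b}$, which is the definition of $\widehat\varphi$. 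The cocycle and normalization bookkeeping is routine. The only mildly delicate point — the main obstacle — is keeping the isomorphisms $a^*,b^*,(ab)^*$ straight when unravelling $(\ref{cms1})$ and $(\ref{cms3})$ for $\epsilon_\Sw$, since one must repeatedly use the commutation relations $c^*a_*=a_*c^*$ and $c^*b^*=(bc)^*$ to move the $(abc)^*$'s past the $a_*$'s and $d^*$'s; this is exactly the manipulation already carried out explicitly in the displayed computation preceding the Proposition (verifying $\widehat\lambda$ is a cocycle and $q_1$ is $G$-linear), so no new ideas are needed.
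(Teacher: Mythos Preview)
Your approach is essentially identical to the paper's: the paper's proof is a single line observing $(~)_1\,in=1$ and giving the component of the pseudo-equivalence $in\,(~)_1\simeq 1$ at each $(G,(\AAw,\Theta),\lambda)$ as exactly your $\epsilon_\Sw=(1_G,\mathbbm{q},1)$ with $\mathbbm{q}=(a^*:A_1\to A_a)_{a\in G}$. You have simply supplied the routine verifications the paper omits (and in fact the naturality square commutes strictly, since both composites have $\varphi$-part equal to $\varphi_{a,b}$ once you use $p(ab)^*(\widehat\varphi_{a,b})=\varphi_{a,b}$ and $q_a\,a^*=p(a)^*q_1$ from $(\ref{yano2})$).
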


\begin{proof} We have $(~)_1\,in=1$, the identity,  while the pseudo-equivalence $in\,(~)_1\simeq
1$ is given, at any object $(G,(\AAw,\Theta),\lambda)$,  by the
isomorphism $$
(1_G,\mathbbm{q},1):(G,(A_1,\theta),\widehat{\lambda})\cong(G,(\AAw,\Theta),\lambda)
,$$ where $\mathbbm{q}=(A_1\overset{a^*}\longrightarrow A_a)_{a\in
G}$.\qed
\end{proof}

Therefore, by composing the biequivalences above with those in
$(\ref{bieres})$, we get the following (already known, see
\cite[Theorem 3.3]{ceg02II}) cohomological description of the
2-category of categorical groups:
\begin{theorem} The $2$-functors $\Delta_1=(~)_1\Delta$ and $\Sigma_1=\Sigma\,{in}$,
\begin{equation*}\label{biequiress}
\xymatrix{\mathbf{CatGp}\ar@{}[r]|-{\thickapprox}
\ar@<4pt>[r]^-{\Delta_1}&\ar@<3pt>[l]^-{\Sigma_1} \mathbf{Z^3}\mathbf{Gp}
}
\end{equation*}
 are quasi-inverse biequivalences.
\end{theorem}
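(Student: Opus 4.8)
The plan is to obtain the statement purely formally, by composing the two biequivalences already established in this subsection and then checking that the composites of the respective quasi-inverses are pseudo-naturally equivalent to the identities. First I would recall that the restriction $(\ref{bieres})$ of Theorem \ref{thebires} provides quasi-inverse biequivalences $\Sigma\colon\mathbf{Z^3}\mathbf{Mnd}|_{\mathbf{Gp}}\to\mathbf{CatGp}$ and $\Delta\colon\mathbf{CatGp}\to\mathbf{Z^3}\mathbf{Mnd}|_{\mathbf{Gp}}$, while the Proposition immediately above gives quasi-inverse biequivalences $in\colon\mathbf{Z^3Gp}\to\mathbf{Z^3}\mathbf{Mnd}|_{\mathbf{Gp}}$ and $(~)_1\colon\mathbf{Z^3}\mathbf{Mnd}|_{\mathbf{Gp}}\to\mathbf{Z^3Gp}$. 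Since a composite of biequivalences is again a biequivalence, the $2$-functors $\Delta_1=(~)_1\,\Delta$ and $\Sigma_1=\Sigma\,{in}$ are biequivalences, so it only remains to produce the two pseudo-natural equivalences witnessing that they are mutually quasi-inverse.

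For the composite $\Delta_1\Sigma_1$ I would simply compute
$$\Delta_1\Sigma_1=(~)_1\,\Delta\,\Sigma\,{in}=(~)_1\,{in}=1_{\mathbf{Z^3Gp}},$$
using the strict equality $\Delta\Sigma=1$ (valid on all of $\mathbf{Z^3_{\mathrm{n\text{-}ab}}Mnd}$, hence on its full $2$-subcategory $\mathbf{Z^3}\mathbf{Mnd}|_{\mathbf{Gp}}$) together with the equality $(~)_1\,{in}=1$ noted in the proof of the preceding Proposition; thus on this side nothing further is needed. For the composite $\Sigma_1\Delta_1$ I would chain the two pseudo-equivalences already in hand:
$$\Sigma_1\Delta_1=\Sigma\,{in}\,(~)_1\,\Delta\;\simeq\;\Sigma\,\Delta\;\simeq\;1_{\mathbf{CatGp}},$$
the first pseudo-natural equivalence being obtained by whiskering the pseudo-equivalence ${in}\,(~)_1\simeq 1$ of the preceding Proposition on the left with $\Sigma$ and on the right with $\Delta$, and the second being the restriction to $\mathbf{CatGp}$ of the pseudo-natural equivalence $J\colon\Sigma\Delta\Rightarrow 1_{\mathbf{MonGpd}}$ appearing in $(\ref{tdelta})$. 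This restriction is legitimate because $\mathbf{CatGp}$ is a full $2$-subcategory of $\mathbf{MonGpd}$ and both $\Sigma$ and $\Delta$ preserve it, invertibility of all objects of a monoidal groupoid $\G$ being equivalent to $M(\G)$ being a group.

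I do not expect a genuine obstacle: the whole argument is formal $2$-categorical bookkeeping, powered only by the already-proved biequivalences and by the elementary fact that biequivalences compose. The one place that deserves a moment's care is the handling of the whiskerings and of the vertical composite of the pseudo-natural equivalences $\Sigma\,{in}\,(~)_1\,\Delta\simeq\Sigma\Delta$ and $\Sigma\Delta\simeq 1$, i.e.\ tracking the associated modifications so that their composite is again a pseudo-natural equivalence, together with a once-and-for-all check that each $2$-functor in sight indeed restricts to the indicated full sub-$2$-categories. None of this goes beyond standard $2$-category theory, so the proof should be short.
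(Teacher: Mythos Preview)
Your proposal is correct and matches the paper's approach exactly: the paper does not even give a separate proof of this theorem, merely prefacing it with ``Therefore, by composing the biequivalences above with those in $(\ref{bieres})$, we get the following\ldots'', which is precisely the formal composition argument you spell out. Your explicit computation of $\Delta_1\Sigma_1=1$ via the strict equalities $\Delta\Sigma=1$ and $(~)_1\,in=1$, together with the chained pseudo-equivalence for $\Sigma_1\Delta_1$, is simply a more detailed unpacking of what the paper leaves implicit.
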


Let us now denote by $\mathbf{H^3Gp}\subseteq \mathbf{H^3Mnd}$ the
full subcategory of the category  of Leech $3$-cohomology classes of
monoids $(\ref{h3mon})$, given by the Eilenberg-Mac Lane
$3$-cohomology classes of groups. An object in $\mathbf{H^3Gp}$ is
then a triple $(G,(A,\theta),c)$, where $G$ is a group, $(A,\theta)$
is a $G$-module, and $c\in \HH^3(G,(A,\theta))$. An arrow
$$(p,q):(G,(A,\theta),c)\to (G',(A',\theta'),c')$$ in $\mathbf{H^3Gp}$
consists of a group homomorphism $p:G\to G'$ and a homomorphism of
$G$-modules $q:(A,\theta)\to (A',\theta'p)$ such that
$p^*(c')=q_*(c)\in \HH^3(G,(A',\theta'p))$.

We have the {\em cohomology class functor}
\begin{eqnarray}&\mathrm{cl}: \mathbf{Z^3Gp}\to
\mathbf{H^3Gp}.\nonumber \\[4pt] \nonumber
&(G,(A,\theta),\lambda)\mapsto (G,(A,\theta),[\lambda])\\ \nonumber
&(p,q,\varphi)\mapsto (p,q)\end{eqnarray}
This functor $\mathrm{cl}$ carries
isomorphic morphisms of $\mathbf{Z^3Gp}$ to the same morphism in
$\mathbf{H^3Gp}$ and is surjective on objects. Moreover, it reflects
isomorphisms and is full: if $(p,q,\varphi):(G,(A,\theta),\lambda)\to
(G',(A',\theta'),\lambda')$ is any morphism in $\mathbf{Z^3Gp}$ such
that the maps $p$ and $q$ are invertible, then the morphism of
$\mathbf{Z^3Gp}$
$$(p^{-1}, q^{-1},
{p^*}^{-1}q_*^{-1}(\varphi^{-1})):(G',(A',\theta'),\lambda')\to
(G,(A,\theta),\lambda)$$ is an inverse of $(p,q,\varphi)$. To see that $\mathrm{cl}$ is full,
let $$(p,q):\mathrm{cl}(G,(A,\theta),\lambda)\to
\mathrm{cl}(G',(A',\theta'),\lambda')$$ be any morphism in
$\mathbf{H^3Gp}$, then $p^*[\lambda']$ and $q_*[\lambda]$ both
represent the same class in $\HH^3(G,(A',\theta'p))$, so there is
$\varphi\in C^2(G,(A',\theta'p))$ such that
$q_*(\lambda)=p^*(\lambda')\circ
\partial^2\varphi$. Then,  $(p,q,\varphi):(G,(A,\theta),\lambda)\to  (G',(A',\theta'),\lambda') $
is a morphism in $\mathbf{Z^3Gp}$ with
$\mathrm{cl}(p,q,\varphi)=(p,q)$. Furthermore, let us observe that any other realization
of $(p,q)$ is of the form  $(p,q,\varphi\circ \phi)$ with $\phi\in
\Z^2(G,(A',\theta'p))$ and, moreover, that there is a deformation
$(p,q,\varphi)\Rightarrow (p,q,\varphi\circ \phi)$ if and only if
$\phi= \partial^1\delta$ for some $\delta\in C^1(G,(A',\theta'p))$.

Hence, the {\em classifying functor}
$$\mathrm{Cl}=\mathrm{cl}\,\Delta_1:\mathbf{CatGp}\to \mathbf{H^3Gp}$$
has the following properties:

\begin{theorem}[\cite{Si} Classification of categorical groups]
$(i)$ For any group $G$, any $G$-module $(A,\theta)$, and any
cohomology class $c\in \HH^3(G,(A,\theta))$, there is a categorical
group $\G$ with an isomorphism $\mathrm{Cl}(\G)\cong
(G,(A,\theta),c)$.

$(ii)$ A monoidal functor between categorical groups $F:\G\to\G'$ is
an equivalence if and only if the induced
$\mathrm{Cl}(F):\mathrm{Cl}(\G)\to\mathrm{Cl}(\G')$ is an
isomorphism.

$(iii)$ If $\G$ and $\G'$ are categorical groups, then, for any
isomorphism $(p,q):\mathrm{Cl}(\G)\cong \mathrm{Cl}(\G')$,
 there is a monoidal equivalence
 $F:\G\overset{_\sim\ }\to\G'$
 such that $\mathrm{Cl}(F)=(p,q)$.

$(iv)$ If $\G$ and $\G'$ are categorical groups with
$\mathrm{Cl}(\G)=(G,(A,\theta),c)$ and
$\mathrm{Cl}(\G')=(G',(A',\theta'),c')$, then,  for any morphism
 $(p,q):\mathrm{Cl}(\G)\to \mathrm{Cl}(\G')$
 in  $\mathbf{H^3Gp}$, there is a
(non-natural) bijection
$$
\big\{[F]:\G\to\G' \mid \mathrm{Cl}(F)=(p,q)\big\}\cong \HH^2(G,(A',\theta'p)),
$$
between the set of isomorphism classes of those monoidal functors
$F:\G\to \G'$ that are carried by the classifying functor to $(p,q)$
and  the second cohomology group of $G$ with coefficients in the
$G$-module $(A',\theta'p)$.
\end{theorem}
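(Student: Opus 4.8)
The plan is to run, almost verbatim, the argument used to prove Theorem~\ref{mt3}, now with the quasi-inverse biequivalences $\Delta_1=(~)_1\Delta$ and $\Sigma_1=\Sigma\, in$ between $\mathbf{CatGp}$ and $\mathbf{Z^3Gp}$ in the roles played there by $\Delta$ and $\Sigma$, and with the cohomology class functor $\mathrm{cl}:\mathbf{Z^3Gp}\to\mathbf{H^3Gp}$ in place of $\mathrm{cl}:\mathbf{Z^3}\mathbf{Mnd}\to\mathbf{H^3}\mathbf{Mnd}$. Two preliminary remarks make everything run: first, $\Delta_1\Sigma_1=(~)_1\,\Delta\Sigma\, in=(~)_1\, in=1$, using the restricted equality $\Delta\Sigma=1$ underlying the biequivalences $(\ref{bieres})$ together with $(~)_1\, in=1$; and second, the functor $\mathrm{cl}:\mathbf{Z^3Gp}\to\mathbf{H^3Gp}$ is surjective on objects, full, identifies isomorphic morphisms, and reflects isomorphisms, as recorded just above the statement. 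Recall also $\mathrm{Cl}=\mathrm{cl}\,\Delta_1$, and that $\mathbf{Z^3Gp}$ is a full $2$-subcategory of $\mathbf{Z^3_{\mathrm{n\text{-}ab}}Mnd}$, so that Lemma~\ref{iseqin} applies inside it.

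For part $(i)$, given $(G,(A,\theta),c)\in\mathbf{H^3Gp}$ I would choose a normalized $3$-cocycle $\lambda\in\Z^3(G,(A,\theta))$ with $[\lambda]=c$ and put $\G=\Sigma_1(G,(A,\theta),\lambda)$. Since the monoid of objects of $\Sigma_1(G,(A,\theta),\lambda)$ is $G$, a group, $\G$ is a categorical group, and $\mathrm{Cl}(\G)=\mathrm{cl}\,\Delta_1\Sigma_1(G,(A,\theta),\lambda)=\mathrm{cl}(G,(A,\theta),\lambda)=(G,(A,\theta),c)$. For $(ii)$: since $\Delta_1$ is a biequivalence, $F:\G\to\G'$ is an equivalence iff $\Delta_1(F)$ is an equivalence in $\mathbf{Z^3Gp}$; by Lemma~\ref{iseqin} this holds exactly when the underlying group homomorphism and module homomorphism of $\Delta_1(F)$ are invertible, i.e.\ exactly when $\mathrm{Cl}(F)=\mathrm{cl}(\Delta_1(F))$ is an isomorphism of $\mathbf{H^3Gp}$. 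For $(iii)$, given an isomorphism $(p,q):\mathrm{Cl}(\G)\cong\mathrm{Cl}(\G')$, write $\Delta_1(\G)=(G,(A,\theta),\lambda)$ and $\Delta_1(\G')=(G',(A',\theta'),\lambda')$; the identity $p^*[\lambda']=q_*[\lambda]$ furnishes $\varphi\in C^2(G,(A',\theta'p))$ with $q_*\lambda=p^*\lambda'\circ\partial^2\varphi$, hence a morphism $(p,q,\varphi)$ in $\mathbf{Z^3Gp}$, which is an isomorphism because $\mathrm{cl}$ reflects isomorphisms; essential surjectivity of $\Delta_1$ on hom-groupoids then yields $F:\G\to\G'$ with $\Delta_1(F)\cong(p,q,\varphi)$, so $F$ is an equivalence by $(ii)$ and $\mathrm{Cl}(F)=\mathrm{cl}(p,q,\varphi)=(p,q)$.

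For $(iv)$, the equivalence of hom-groupoids $\Delta_1:\mathbf{CatGp}(\G,\G')\simeq\mathbf{Z^3Gp}(\Delta_1(\G),\Delta_1(\G'))$ induces a bijection $[F]\mapsto[\Delta_1(F)]$ on isomorphism classes, which restricts to a bijection between $\{[F]\mid\mathrm{Cl}(F)=(p,q)\}$ and the set of isomorphism classes of morphisms $(p,q,\varphi):(G,(A,\theta),\lambda)\to(G',(A',\theta'),\lambda')$ in $\mathbf{Z^3Gp}$. Exactly as in the proof of Theorem~\ref{mt3}$(iv)$: such $\varphi$ exist because $q_*[\lambda]=p^*[\lambda']$; any realization of $(p,q)$ has the form $(p,q,\varphi\circ\phi)$ with $\phi\in\Z^2(G,(A',\theta'p))$; and $(p,q,\varphi)$ and $(p,q,\varphi\circ\phi)$ are isomorphic iff $\phi=\partial^1\delta$ for some $\delta\in C^1(G,(A',\theta'p))$. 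Hence $[\phi]\mapsto[p,q,\varphi\circ\phi]$ is the asserted bijection with $\HH^2(G,(A',\theta'p))$.

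I do not expect a genuine obstacle: the whole theorem is a transport along biequivalences of Theorem~\ref{mt3}. The only points needing a moment's care are checking that $\Sigma_1$ lands in $\mathbf{CatGp}$ (immediate, since its object monoid is a group) and keeping the argument of $(iv)$ entirely at the level of isomorphism classes, as the bijection displayed there is not natural in $(p,q)$.
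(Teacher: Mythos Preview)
Your proposal is correct and follows the same approach the paper intends: the paper does not spell out a proof for this theorem but instead sets up exactly the ingredients you invoke (the biequivalences $\Delta_1,\Sigma_1$, the equality $\Delta_1\Sigma_1=1$, and the properties of $\mathrm{cl}:\mathbf{Z^3Gp}\to\mathbf{H^3Gp}$ listed just before the statement), then writes ``Hence, the classifying functor $\mathrm{Cl}=\mathrm{cl}\,\Delta_1$ has the following properties,'' leaving the argument to be transported verbatim from Theorem~\ref{mt3}. Your write-up is precisely that transport; the only cosmetic difference is that the paper's proof of Theorem~\ref{mt3} deduces $(iii)$ from $(iv)$ and $(ii)$, whereas you prove $(iii)$ directly via fullness and isomorphism-reflection of $\mathrm{cl}$, which is equally valid.
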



\begin{thebibliography}{99}


\bibitem{B-L} Baez, J.C., Lauda, A.D.:  Higher dimensional algebra V:
2-groups. Theory Appl. Categ. 12, 423-492 (2004)

\bibitem{Barr-Beck} Barr, M.,  Beck, J.: Homology and standard constructions. In: Seminar on Triples and Categorical Homology. Lecture Notes in Math. 80 , pp. 245-335. Springer, Berlin (1969)

\bibitem{B-D} Baues, H.-J., Dreckmann, W.: The cohomology
of homotopy categories and the general linear group. K-theory 3 , 307-338 (1989)

\bibitem{B-W} Baues, H.-J., Wirsching, G.: Cohomology of small categories, J. Pure Appl. Algebra
38, 187-211 (1985).

\bibitem{Beck} Beck, J.: Triples, algebras and cohomology. Ph. Dissertation, Columbia University, 1967. Repr. Theory Appl. Categ.  2, 1-59 (2003)

\bibitem{Br} Breen, L.: Theorie de Schreier superieure. Ann.
Scient. Ec. Norm. Sup.  25, 465-514 (1992)

\bibitem{b-s} Brown, R., Spencer, C.B.: G-groupoids, crossed modules and the fundamental groupoid of a
topological group. Proc. Kon. Nederl. Acad. Wetensch. 79,
296-302 (1976)

\bibitem{c-c-q} Calvo, M., Cegarra, A.M., Quang, N.T.: Higher
cohomologies of modules. Algebr. Geom. Topol.  12,
343-413 (2012)


\bibitem{car94} Carrasco, P., Cegarra, A.M.: Schreier theory for
central extensions of categorical groups. Comm. Algebra 24, 4059-4112 (1996)



\bibitem{C-A} Cegarra, A.M., Aznar, E.: An exact sequence in the first variable for torsor cohomology: The 2-dimensional theory of obstructions. J. Pure Appl. Alg. 39, 197-250 (1986)

\bibitem{C-B-G} Cegarra, A.M., Bullejos, M., Garz\'on, A.R.: Higher dimensional
obstruction theory in algebraic categories. J. Pure Appl. Alg. 49, 43-102
(1987)

\bibitem{ceg02II} Cegarra, A. M., Garc\'{\i}a-Calcines, J.M.,
Ortega, J. A.:  On graded categorical groups and equivariant group
extensions.  Canad. J. Math.    54, 970-997 (2002)


\bibitem{c-g} Cegarra,  A.M.,  Garz\'on, A. R.: Homotopy classification of categorical
torsors. Appl. Categ. Structures 9, 465-496 (2001)

\bibitem{c2} Cegarra, A. M., Khmaladze, E.: Homotopy classification of braided
graded categorical groups. J. Pure Appl. Algebra 209,
411-437 (2007)

\bibitem{c3} Cegarra, A. M., Khmaladze, E.: Homotopy classification of graded Picard categories. Adv.
Math. 213, 644-686  (2007)


\bibitem{Del} Deligne, P.: Les immeubles des groupes de tresses g\'en\'eralis\'es. Invent. Math. 17, 273–302 (1972)

\bibitem{Duskin} Duskin, J.: Simplicial methods and the interpretation of Triple Cohomology. Memoires Amer. Math. Soc.
  163 (1975)


\bibitem{fw3} Fr\"{o}hlich, A., Wall, C.T.C.: Equivariant Brauer groups. Contemp. Math.
272, 57-71 (2000)

\bibitem{G-Z} Gabriel, P., Zisman, M.: Calculus of Fractions and Homotopy Theory.
Springer, Berlin (1967)

\bibitem{Glenn} Glenn, P.: Realization of cohomology classes in arbitrary exact categories. J. Pure Appl. Algebra 25, 33-105 (1982)

\bibitem{Gro} Grothendieck, A.: Cat\'{e}gories fibr\'{e}es et d\'{e}scente, (SGA I,
expos' VI). Lecture Notes in Math.  224, pp. 145-194. Springer, Berlin,
 (1971)

\bibitem{Ill} Illusie, L.: Complex Cotangent et D\'{e}formations II.
Lecture Notes in Math.  283, Springer, Berlin (1972).


\bibitem{Inas} Inassaridze, H.: Non-Abelian Homological Algebra and Its
Applications. Kluwer Acad. Publisher, Dordrecht (1997)

\bibitem{leech} Leech, J.: Two papers: $\mathcal{H}$-coextensions of monoids and the structure of a band of groups. Memoirs A.M.S. 157 (1975)

\bibitem{leech2} Leech, J.: Extending groups by monoids. J. Algebra 74, 1-19 (1982)

\bibitem{J} Joyal, A.:  Foncteurs analytiques et esp\`{e}ces de structures. Combinatoire  \'{e}num\'{e}rative, Proceedings, Montr\'{e}al, Qu\'ebec, 1985. Lecture Notes in Math. 1234, pp.  55-112. Springer-Verlag, Berlin (1991)

\bibitem{Jo-St} Joyal, A., Street, R.: Braided tensor categories.
Adv. Math.  82, 20-78 (1991)

\bibitem{Mac} Mac Lane, S.: Natural Associativity and Commutativity. Rice University Studies 49,
28-46 (1963)

\bibitem{Pir1} Pirashvili, T.: Models for the homotopy theory and cohomology of small categories. Soobshch.
Akad. Nauk Gruzin. SSR 129, 261-264 (1988)

\bibitem{Pir2} Pirashvili,T.: Cohomology of small categories in homotopical algebra. In: K-theory and
homological algebra (Tbilisi, 1987–88). Lecture Notes in Math. 1437, pp. 268-302. Springer,
Berlin (1990)

\bibitem{Re} R\'edei, L.: Die Verallgemeinerung der Schreierschen Erweiterungstheorie. Acta Sci. Math. Szeged 14, 252-273
(1952)

\bibitem{Ro} Roos, J. E.: Sur les foncteurs d\'eriv\'es de
$\varprojlim$. Applications. C.R. Acad. Sci.
Paris  252, 3702-3704 (1961)

\bibitem{Sa} Saavedra,  N.:  Cat\'egories Tannakiennes.  Lecture Notes
in Math.  265, Springer, Berlin-New York (1972).


\bibitem{Sch} Schreier, 0.: Uber die Erweiterung von Gruppen 1. Monatsh. Math.
Phys., vol. 34, 165-180 (1926)

\bibitem{Si} Sinh,  H.X.: Gr-cat\'egories. Th\'ese de Doctorat,
Universit\'e Paris VII (1975)

\bibitem{Stas} Stasheff, J.: Homotopy associativity of $H$-spaces I, II. Trans. Amer. Math. Soc. 108, 275-312 (1963)

\bibitem{street} Street,  R.: Categorical structures. In Handbook of Algebra, Vol. 1,
529-577, North-Holland, Amsterdam (1996)

\bibitem{Ul} Ulbrich, K.-U.: Group cohomology for Picard categories. J. Algebra 91, 464 – 498 (1984)

\bibitem{W} Watts, Ch. E.:  A homology theory for small categories. In:
Proc. Conf. Categorical Algebra, La Jolla 1965, pp. 331-336. Springer, New york (1966)

\bibitem{wells} Wells, C.: Extension theories for monoids. Semigroup Forum 16, 13-35
(1978)
\bibitem{wells-2} Wells, C.: Extension theories for categories.
Preliminary report (1979), revised version (2001) available from
http://www.cwru.edu/artsci/math/wells/pub/pdf/catext.pdf


\end{thebibliography}
\end{document}